\documentclass[a4paper,oneside,11pt]{article}

\usepackage{amsmath}
\usepackage{amssymb}
\usepackage[utf8]{inputenc} 
\usepackage{accents}
\usepackage{mathtools}
\usepackage{amsthm}
\usepackage[english]{babel}
\usepackage{letltxmacro}
\usepackage{xparse}
\usepackage[dvipsnames,svgnames,x11names]{xcolor}
\usepackage{graphicx}
\usepackage[inline]{enumitem}
\usepackage{yhmath}

\usepackage{fullpage}
\usepackage{fancyhdr}
\usepackage{ebgaramond}
\usepackage{euscript}
\usepackage{mathrsfs}
\usepackage{accents}
\usepackage[bbgreekl]{mathbbol}
\usepackage[euler]{textgreek}
\usepackage[scr=boondoxo]{mathalfa}
\DeclareSymbolFontAlphabet{\mathbbm}{bbold}
\DeclareSymbolFontAlphabet{\mathbb}{AMSb}%
\usepackage{mleftright}
\usepackage{dirtytalk}
\usepackage{shuffle}
\usepackage{scalerel}
\usepackage{tensor}
\usepackage{soul}
\setul{0pt}{0.4pt}

\mathchardef\mhyphen="2D

\usepackage{tikz}
\usetikzlibrary{decorations.pathreplacing}
\usetikzlibrary{decorations.markings}
\usetikzlibrary{decorations.fractals}
\usetikzlibrary{lindenmayersystems}
\usepackage{pgfplots}
\usepackage{wrapfig}
\usepackage{caption}
\usepackage[percent]{overpic}
\usepackage{tikz-cd}

\usepackage{aliascnt}

\usepackage[
  colorlinks=true,
  linkcolor=BrickRed,
  citecolor=ForestGreen,
  urlcolor=ProcessBlue
]{hyperref}

\addto\extrasenglish{
	
}
\addto\extrasenglish{
	
}

\newcommand{\mynewtheorem}[2]{
\newaliascnt{#1}{dummy}
\newtheorem{#1}[#1]{#2}
\aliascntresetthe{#1}
\expandafter\def\csname #1autorefname\endcsname{#2}
}

\newtheoremstyle{note}
{\topsep}   
{\topsep}   
{}  
{0pt}       
{\itshape\bfseries} 
{.}         
{5pt plus 1pt minus 1pt} 
{{\color{imperialTangerine}\thmname{#1}\emph{\thmnumber{ #2}}}\hspace{0.1em}\textnormal{\thmnote{ (#3)}}}     

\theoremstyle{plain}
\mynewtheorem{thm}{Theorem}
\mynewtheorem{thmdef}{Theorem/Definition}
\mynewtheorem{prop}{Proposition}
\mynewtheorem{cor}{Corollary}
\mynewtheorem{lem}{Lemma}
\mynewtheorem{conj}{Conjecture}
\theoremstyle{definition}
\mynewtheorem{defn}{Definition}
\mynewtheorem{expl}{Example}
\mynewtheorem{prob}{Problem}
\mynewtheorem{ques}{Question}
\mynewtheorem{cond}{Condition}
\mynewtheorem{conv}{Convention}
\mynewtheorem{defthm}{Definition/Theorem}
\theoremstyle{remark}
\mynewtheorem{rem}{Remark}

\usepackage{iftex}
\iftutex
\usepackage{unicode-math}
\setmathfontface\altgrfont{GFS Artemisia Italic}[Scale=MatchLowercase]

\else
\usepackage[T1]{fontenc}
\DeclareSymbolFont{altgr}{OML}{antt}{m}{it}
\DeclareMathSymbol{\sko }{\mathord}{altgr}{"0E}
\fi

\makeatletter
\def\upintkern@{\mkern-7mu\mathchoice{\mkern-3.5mu}{}{}{}}
\def\upintdots@{\mathchoice{\mkern-4mu\@cdots\mkern-4mu}%
{{\cdotp}\mkern1.5mu{\cdotp}\mkern1.5mu{\cdotp}}%
{{\cdotp}\mkern1mu{\cdotp}\mkern1mu{\cdotp}}%
{{\cdotp}\mkern1mu{\cdotp}\mkern1mu{\cdotp}}}

\newcommand{\UpMultiIntegral}[1]{%
\edef\ints@c{\noexpand\upintop
	\ifnum#1=\z@\noexpand\upintdots@\else\noexpand\upintkern@\fi
	\ifnum#1>\tw@\noexpand\upintop\noexpand\upintkern@\fi
	\ifnum#1>\thr@@\noexpand\upintop\noexpand\upintkern@\fi
	\noexpand\upintop
	\noexpand\ilimits@
}%
\futurelet\@let@token\ints@a
}
\makeatother

\DeclareFontFamily{OMX}{mdbch}{}
\DeclareFontShape{OMX}{mdbch}{m}{n}{ <->s * [0.8]  mdbchr7v }{}
\DeclareFontShape{OMX}{mdbch}{b}{n}{ <->s * [0.8]  mdbchb7v }{}
\DeclareFontShape{OMX}{mdbch}{bx}{n}{<->ssub * mdbch/b/n}{}

\DeclareSymbolFont{uplargesymbols}{OMX}{mdbch}{m}{n}
\SetSymbolFont{uplargesymbols}{bold}{OMX}{mdbch}{b}{n}
\DeclareMathSymbol{\upintop}{\mathop}{uplargesymbols}{82}
\DeclareMathSymbol{\upointop}{\mathop}{uplargesymbols}{"48}

\DeclareFontEncoding{MDB}{}{}
\DeclareFontFamily{MDB}{mdbch}{}
\DeclareFontShape{MDB}{mdbch}{m}{n}{ <->s * [0.8]  mdbchrmb }{}
\DeclareFontShape{MDB}{mdbch}{b}{n}{ <->s * [0.8]  mdbchbmb }{}
\DeclareFontShape{MDB}{mdbch}{bx}{n}{<->ssub * mdbch/b/n}{}
\DeclareFontSubstitution{MDB}{cmr}{m}{n}
\DeclareSymbolFont{mathdesignB}{MDB}{mdbch}{m}{n}%
\SetSymbolFont{mathdesignB}{bold}{MDB}{mdbch}{b}{n}%
\DeclareMathSymbol{\upintclockwise}{\mathop}{mathdesignB}{128}
\DeclareMathSymbol{\upointclockwise}{\mathop}{mathdesignB}{130}
\DeclareMathSymbol{\upointctrclockwise}{\mathop}{mathdesignB}{132}
\DeclareMathSymbol{\upoiint}{\mathop}{mathdesignB}{134}
\DeclareMathSymbol{\upoiiint}{\mathop}{mathdesignB}{136}

\makeatletter
\newcommand{\upint}{\DOTSI\upintop\ilimits@}
\newcommand{\upoint}{\DOTSI\upointop\ilimits@}
\makeatother

\renewcommand{\int}{\upint}
\newcommand{\dif}{\,\mathrm{d}}
\newcommand{\Lim}{\operatorname{Lim}}
\newcommand{\Z}{\mathbb{Z}}
\newcommand{\X}{\mathbb{X}}

\newcommand{\scal}[1]{\langle #1 \rangle}
\newcommand{\R}{\mathbb{R}}
\newcommand{\Q}{\mathbb{Q}}

\newcommand{\eps}{\varepsilon}

\newcommand{\cl}{\mathrm{cl}}

\newcommand{\Hol}[1]{#1\text{-}\mathrm{H\ddot ol}}

\usepackage{authblk}

\title{\textsc{Locality of rough path lifts}}

\author[1]{Ilya Chevyrev\thanks{\texttt{ichevyrev@gmail.com}}}
\author[1]{Emilio Ferrucci\thanks{\texttt{emilio.ferrucci@sissa.it}}}

\affil[1]{SISSA, Trieste, Italy}
\date{\today}

\usetikzlibrary{decorations.markings}
\usepackage[toc,page]{appendix}

\def\bb1{\mathbbm{1}}

\def\A{\mathbb{A}}
\def\cH{\mathcal{H}}
\def\cB{\mathcal{B}}

\def\cI{\mathcal{I}}

\def\cC{\mathcal{C}}
\def\cE{\mathcal{E}}
\def\cF{\mathcal{F}}

\def\P{\mathbb{P}}
\def\E{\mathbb{E}}

\newcommand{\CHol}[1]{C^{#1\text{-}\mathrm{H\ddot ol}}}
\def\bbR{\mathbb{R}}
\def\bX{\boldsymbol{X}}
\newcommand{\be}{\mathbf{e}}

\pgfplotsset{compat=1.18}

\begin{document}

\maketitle 

\vspace{-3ex}
\begin{abstract}
Every H\"older continuous path $X$ admits a geometric rough path lift $\bX$ by the Lyons--Victoir extension theorem. A natural question that emerges when lifting more than one path segment at once is that of \emph{locality}, namely whether the lift $\bX_{s,t}$ only depends on the increments $X_{s,u}$, $u \in [s,t]$. We investigate the locality of rough path lifts in deterministic and stochastic settings.

On the deterministic side, we show that no local, homogeneous rough path lift can be defined on $\gamma$-H\"older paths for all $\gamma\leq 1/2$.
More strongly, we show that no L\'evy area can be defined which is at the same time bounded, with no further regularity assumptions, and either local and homogeneous or time translation--invariant. We moreover show that the boundedness requirement is sharp: an unbounded, local, time translation--invariant, and bilinear L\'evy area can be defined on all continuous paths.

On the stochastic side, we classify all local, square-integrable rough path lifts of $d$-dimensional fractional Brownian motion with Hurst parameter $H \in (0,1/2]$. For $H \leq 1/4$, we show that no such lifts exist when $d \geq 2$,
while for $H>1/4$, we show that all such lifts are stochastic translations of the canonical rough path. We further refine the classification by requiring invariance in law under time translation, scaling, and coordinate permutation, and show that only the canonical lift satisfies these constraints except at \(H=1/3\), for which there is a one-parameter family of lifts.
Finally, we present an argument showing that scale-invariance forces a local $\eta$-H\"older rough path lift of fractional Brownian motion to be square-integrable, allowing scale-invariance to replace square-integrability in our classification and non-existence results.
\end{abstract}

\tableofcontents

\section{Introduction}\label{sec:intro}

A rough path over an $\alpha$-H\"older path $X \colon [0,T] \to \R^d$
is a collection of two-parameter functions $\bX^{(n)}_{s,t}$, $0 \leq s \leq t \leq T$, which plays the role of iterated integrals
\begin{equation}\label{eq:iterated_integral}
\int_{s < u_1 < \ldots < u_n < t} \dif X_{u_1} \otimes \cdots \otimes \dif X_{u_n}\;.
\end{equation}
Introduced by Lyons \cite{Lyo98}, rough paths are a powerful tool to solve differential equations driven by $X$ when $\alpha \leq \frac 12$.

When $\alpha>\frac12$, there is a canonical and unique choice for $\bX^{(n)}_{s,t}$ given by Young integration \cite{You36, Lyo94}.
For $\alpha \leq \frac 12$, rough path lifts are no longer unique, but a fundamental result of Lyons--Victoir \cite{LV07} is that a rough path exists above every path.
This result is further refined in \cite{TaZa20, BZ22_sewing} by making the lift constructive and even continuous
(see \cite{Hai14, CZ20} for related results in regularity structures).
All of these constructions, however, are non-canonical and rely on arbitrary choices.

Since $\smash{\bX^{(n)}_{s,t}}$ is an abstraction of the (otherwise undefined) iterated integral \eqref{eq:iterated_integral},
a natural requirement of any rough path lift is that it be \emph{local}, meaning that $\smash{\bX^{(n)}_{s,t}}$ depends only on the increments of $X$ in the interval $[s,t]$.
In the same spirit, one can ask that the lift is \emph{time translation--invariant}, meaning that $\smash{\bX^{(n)}_{s,t} = \bX^{(n)}_{u,v}}$ whenever the increments of $X$ on $[s,t]$ and $[u,v]$ are equal.
See \autoref{fig:locality} for an illustration of these concepts.
These properties are satisfied when the integral \emph{is} defined canonically, e.g.\ in the Young regime.
The abstract lifts provided in \cite{LV07, TaZa20, BZ22_sewing} do \emph{not} satisfy either property since data from $X$ on the whole interval is needed to define $\bX$ on any subinterval.
A natural question is therefore whether rough path lifts exist that are local and/or time translation--invariant.
In this article, we investigate this question in a deterministic and stochastic setting.

\begin{figure}[h]
	\centering
	\includegraphics[width=0.5\textwidth]{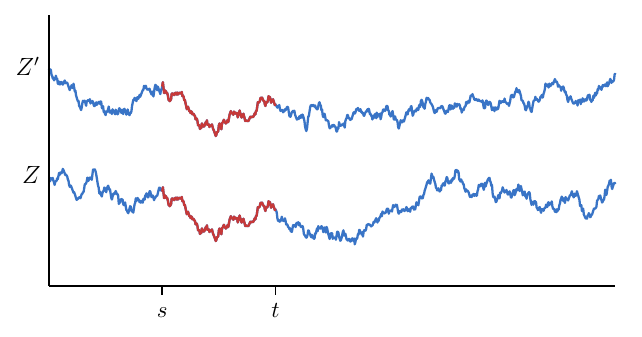}%
	\includegraphics[width=0.5\textwidth]{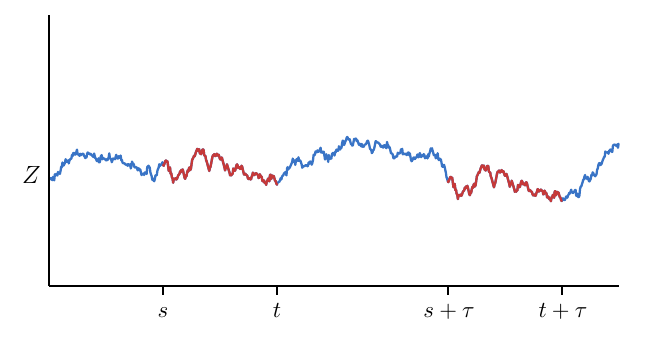}
	\vspace{-4ex}
	\caption{In the plot on the left, a local choice of area $\A$ should satisfy $\A_{s,t}(Z) = \A_{s,t}(Z')$ since $Z_u - Z_s = Z_u' - Z_s'$ for $u \in [s,t]$.
	In the plot on the right, a time translation--invariant area should satisfy $\A_{s,t}(Z) = \A_{s+\tau,t+\tau}(Z)$ since $Z_u - Z_s = Z_{u+\tau} - Z_{s + \tau}$ for $u \in [s,t]$.
	(The paths should be thought of as being multidimensional.)}
	\label{fig:locality}
\end{figure}
\vspace{-3ex}

\paragraph{Deterministic lifts.}
Our first contribution is to show that, under further mild assumptions (e.g.\ bilinearity), no local lift is possible for $\alpha\leq 1/2$.
Specifically, we show that there cannot be a rough path lift of planar $\frac 12$-H\"older paths that is either (a) local and homogeneous (the latter just means that the map $(X, Y) \mapsto \int \! X \dif Y$ behaves $\mathbb R$-bilinearly) or (b) time translation--invariant.
In fact, we replace the notion of rough path lift with the weaker notion of an \say{area process}, which we require to be merely bounded but not H\"older-continuous.
See \autoref{thm:main_determ} for a precise statement.
Our proof is obtained by considering variants of the L{\'e}vy C curve \cite{LevyC}, \cite[Ch.\ 12]{Edg04},
first introduced by Cesaro \cite{cesaro} and Faber \cite{faber}; see \autoref{fig:levyC}.

A central tool in the construction of rough path lifts and rough integrals is the concept of sewing maps (see \autoref{subsec:sewing}).
Sewing maps were introduced in \cite{Gub04,Feyel06_sewing} for H\"older exponents $\gamma>1$,
in which case there is only one sewing map,
and in \cite{BZ22_sewing} for $\gamma\leq 1$.
As a consequence of our non-existence results,
we prove in \autoref{cor:local_sewing} and \autoref{cor:time_sewing} that there are no homogeneous and local or time translation--invariant sewing maps for $\gamma\leq 1$, answering the analogous question of locality in \cite[Remark 2.10]{BZ22_sewing} in the negative.

We conclude \autoref{sec:determ} by showing that, for non-existence, the boundedness requirement of area processes is sharp.
Specifically, we show that there is a version of sewing maps on continuous germs that respect the usual algebraic (but not analytic) constraints  and which is linear, local, and time translation--invariant (but necessarily unbounded);
see \autoref{thm:algebraic_local_sewing} and \autoref{cor:algebraic_area}.
We also show in \autoref{thm:algebraic_local_sewing} that, for discontinuous germs, such local sewing maps still exist but time translation--invariant ones do not.
While not directly related to rough path theory due to lack of regularity requirements, we find these results somewhat surprising, especially that time translation-invariance is the sole obstruction to unbounded sewing maps on arbitrary germs.

\paragraph{Stochastic lifts.}

When the path $X$ is random, the probabilistic structure often leads to a (more) canonical choice of rough path lift.
Stochastic processes for which rough paths have been constructed and studied include Gaussian processes \cite{CQ02,FV10b,CF10,FGGR16}, (c\`adl\`ag) semimartingales \cite{FrizVictoir08BDG,CF19}, and Markov processes \cite{BHL02,FV08, Cass_Ogrodnik_17,CO18};
see the books \cite{LQ02, FV10, FH20} for further details.
In all of these examples, a \say{canonical} lift above $X$ is given as the limit in probability of the lift of bounded variation (e.g.\ piecewise linear) approximations of $X$.
Other lifts also naturally arise, e.g.\ the It\^o lift for semimartingales, or
lifts that are neither Stratonovich nor It\^o that emerge in various physical systems \cite{FGL15,KM16,Gottwald_Melbourne_24,Engel_Friz_Orenshtein_26}.
It is easy to verify that the above-mentioned stochastic lifts are local, understood in terms of measurability with respect to local increments of $X$.
(This does not contradict the non-existence of local rough path lifts for $\alpha\leq 1/2$ in the deterministic setting of \autoref{sec:determ} because, in the stochastic setting, one is only asking for a lift to exist almost surely.)

The stochastic process we focus on in this article is fractional Brownian motion with Hurst parameter $H\in (0,1)$ ($H$-fBm),
which is $\alpha$-H\"older continuous for $\alpha<H$.
It is useful to distinguish three regimes: $H>1/2$, $H\in (1/4,1/2]$, and $H\leq 1/4$. The case $H>1/2$ is the Young regime in which case rough path lifts are unique.
For $H\in (\frac14,\frac12]$, it was first shown in \cite{CQ02} that piecewise linear approximations of $H$-fBm converge in $L^2$ to a rough path lift; this is called the canonical lift.
For $H\leq 1/4$, the same work shows that these approximations have no subsequential limit in probability. Other constructions do give rough path lifts for $H\leq 1/4$ by exploiting the Gaussian Volterra structure \cite{Unt10, NuaTin11}, but they are not local in the above sense and, for $H>1/4$, do not reduce to the canonical lift. It was shown in \cite{Hai25} that another natural way to treat the regime $H \leq 1/4$ is by ‘variance renormalisation’, which leads to convergence in distribution to a
random rough path above the constant zero-path.

Our main contribution in the stochastic setting is \autoref{thm:localfbm}, which classifies all local, square-integrable lifts of $d$-dimensional $H$-fBm for $H\in (0,1/2]$.
We show that, for $H\leq 1/4$ and $d \geq 2$, no such lifts exist.
On the other hand, for $H > 1/4$ all such lifts $\bX$ are suitable translations of the canonical lift $\overline\bX$, given by Wiener integrals against a deterministic rough path above the zero-path.
We prove this by expanding the difference $\bX-\overline\bX$ in Wiener chaos and using the Chen identity to show that all its chaos projections vanish except for the zero-th (at degree $2$) and zero-th and first (at degree $3$).
In \autoref{cor:inv} we further classify which of these lifts are stationary, scale-invariant, or invariant with respect to permutation of the spatial coordinates. An unexpected result is that, apart from the canonical lift, the only other lifts for which all three properties hold occur at $H = \frac 13$, and there is a one-dimensional continuum of them.

We conjecture that the $L^2$ assumption in \autoref{thm:localfbm} is unnecessary, and that our classification of local rough path lifts of $H$-fBm holds without it.
We end by presenting a proof, suggested by GPT 5.6 Sol Pro, that if the rough path lift is further assumed to be scale-invariant, then it is automatically in $L^2$, see \autoref{thm:scale_integrability}.
This in particular implies that scale-invariance can replace square-integrability in the classification \autoref{thm:localfbm}.

\paragraph{Acknowledgements}
Both authors gratefully acknowledge support from the ERC via the Starting Grant SQGT 101116964. During the first phase of this project, EF was employed at the University of Oxford and supported by the EPSRC programme grant [EP/S026347/1].
EF is grateful to Martin Hairer for discussions related to rough paths lifts of fBm with $H \leq \frac 14$.

\paragraph{Notation}

We write $X \lesssim Y$ to mean that there exists a constant $C > 0$ such that $X \leq CY$, and $X \asymp Y$ to mean that $X \lesssim Y$ and $Y \lesssim X$.
Subscripts to these symbols refer to dependence of the constant,
e.g.\ $\lesssim_s$ means $C$ may depend on $s$. For a path $X\colon [s,t]\to E$,
where $(E,\|\cdot\|)$ is a normed space, we write $\|X\|_{\Hol\gamma} = \sup_{s\leq u< v\leq t}|u-v|^{-\gamma}|X_u-X_v|$ for the $\gamma$-H\"older seminorm of $X$.
We write $C^{\Hol\gamma}$ for the space of $\gamma$-H\"older paths.
Unless otherwise specified, paths take values in $\R$,
e.g.\ $C[0,1]$ means the space of continuous functions from $[0,1]$ to $\R$.

\section{Area processes and sewing maps}\label{sec:determ}

For a path $X$ taking values in a vector space, write $X_{s,t} \coloneqq X_t - X_s$. Let 
\begin{equation}\label{eq:triangle_def}
\triangle[s,t] \coloneqq \{(u,v) \in [s,t]^2 \mid u \leq v\},\quad \triangle \coloneqq \triangle[0,1] .
\end{equation}
For $\gamma \in (0,1]$
denote
\(
C^\gamma = \CHol{\gamma}[0,1]
\).
Note that $C^\gamma \times C^\gamma$ can be identified with $\CHol{\gamma}([0,1],\bbR^2)$.

We further denote
\begin{equation}\label{eq:B_def}
B = \Big\{X\in \R^{\triangle} \, : \, \sup_{(s,t)\in\triangle}|X_{s,t}| < \infty\Big\}\;.
\end{equation}

\begin{defn}[Area process]\label{def:area}
Let $\gamma\in (0,1]$ and $(X,Y) \in C^\gamma \times C^\gamma$.
An \emph{area process} on $(X,Y)$ is an element
\begin{equation*}
\A(X,Y) = (\A_{s,t}(X,Y))_{(s,t) \in \triangle} \in B
\end{equation*}
such that, for all $0 \leq s \leq u \leq t \leq 1$, one has the \emph{Chen identity}
\begin{equation}\label{eq:chen}
\A_{s,t}(X,Y) = \A_{s,u}(X,Y) + \A_{u,t}(X,Y) + \frac12(X_{s,u}Y_{u,t} - Y_{s,u}X_{u,t})\;.
\end{equation}
For a subset $P \subset C^\gamma \times C^\gamma$, an area process on $P$ is a map $\A \colon P \to B$ such that $\A(X,Y)$ is an area process on $(X,Y)$ for all $(X,Y) \in P$.
\end{defn}

\begin{rem}[Link with rough paths]
A level-$2$ $\gamma$-H\"older weakly geometric rough path over $(X^1,X^2)$is a 
$2$-parameter function $\X^{ij}_{s,t}$ for $(s,t)\in\triangle$
and $i,j=1,2$ such that $\X^{ij}_{s,t} = \X^{ij}_{s,u} + \X^{ij}_{u,t} + X^i_{s,u}X^j_{u,t}$, $\X^{ij}_{s,t} + \X^{ji}_{s,t}=X^{i}_{s,t}X^j_{s,t}$
and the regularity condition $|\X^{ij}_{s,t}|\lesssim |t-s|^{2\gamma}$ holds uniformly in $(s,t)\in\triangle$.
Every such $\X$ defines an area process by $\X^{12}_{s,t} - \X^{21}_{s,t} = 2\A_{s,t}(X^1,X^2)$.
Conversely, an area process defines a level-$2$ weakly geometric rough path without the regularity condition.
\end{rem}

Given two area processes $\A$ and $\bar\A$ on $C^\gamma \times C^\gamma$, the Chen identity immediately implies that $\A_{s,t}(X,Y) - \bar \A_{s,t}(X,Y)$ is the increment of a path. Therefore, given one area process on $C^\gamma \times C^\gamma$, the space of area processes on $C^\gamma \times C^\gamma$ is parametrised by functions from $C^\gamma \times C^\gamma$ to $\{X\in \R^{[0,1]} \,:\, X_0=0 ,\|X\|_\infty<\infty\}$.

If $\gamma > \frac 12$ an area process on $C^\gamma \times C^\gamma$ is given by the L{\'e}vy area
arising from Young integration \cite{You36} by
\[
\A_{s,t}(X,Y) = \frac12\Big(\int_s^t X_{s,u} \dif Y_u - \int_s^t Y_{s,u} \dif X_u
\Big)
\;.
\]
This choice of $\A$ is furthermore the only one satisfying
the natural regularity condition $|\A_{s,t}(X,Y)| \lesssim |t-s|^{2\gamma}$ uniformly in $(s,t)\in\triangle$
because any other choice must differ from $\A_{s,t}(X,Y)$ by a $2\gamma$-H\"older continuous path, and $2\gamma > 1$. For $\gamma \leq \frac 12$, area processes are not unique, even under the regularity condition $|\A_{s,t}(X,Y)| \lesssim |t-s|^{2\gamma}$.

Given $Z \in C^\gamma\times C^\gamma$, define its local increment process on $[s,t]\subset[0,1]$ by
\begin{equation}\label{eq:r}
\begin{split}
&Z_{s, [s,t]} \coloneqq ([s,t] \ni u \mapsto Z_{s,u}) \in \CHol{\gamma}([s,t]) \times \CHol{\gamma}([s,t])
\;. 
\end{split}
\end{equation}
The following definition (cf.\ \cite[Remark 2.10]{BZ22_sewing}), which constitutes the central theme of this paper, formalises the intuition that $\A_{s,t}(X,Y)$ is only allowed to depend on increments of $X$ and $Y$ between $s$ and $t$.

\begin{defn}[Locality]\label{def:local}
Let $\A$ be an area process on $P\subset C^\gamma \times C^\gamma$.
We will say that $\A$ is \emph{local} if for $Z, Z' \in P$ and any $(s,t) \in \triangle$, one has the implication
\begin{equation*}
Z_{s,[s,t]} = Z'_{s,[s,t]} \implies \A_{s,t}(Z) = \A_{s,t}(Z') \;.
\end{equation*}
\end{defn}
Locality is equivalent to it being possible to view $\A_{s,t}$ as a map defined on $\{Z_{s,[s,t]} \mid Z \in P\}$ for all $(s,t) \in \triangle$.

\begin{defn}\label{def:homog}
We say that an area process $\A$ on $P\subset C^\gamma \times C^\gamma$ is \emph{homogeneous}
if for all $a,b\geq 0$ and $(X,Y)\in P$ such that $(aX,bY) \in P$, one has for all $(s,t) \in \triangle$
\[
\A_{s,t}(aX,bY) = ab\A_{s,t}(X,Y)\;.
\]
\end{defn}

\begin{rem}
In proving non-existence of homogeneous, local area processes for $\gamma\leq 1/2$ (see \autoref{thm:main_determ} \ref{pt:local}),
we only apply \autoref{def:homog} with either $a=0$ or $b=0$.
\end{rem}

\begin{defn}
We say that an area process $\A$ on $Z\in C^\gamma \times C^\gamma$ is \emph{time translation--invariant} if for all $(s,t) \in \triangle$ and $\tau \geq 0$ such that $t + \tau \leq 1$ and $Z_{s + \tau,u + \tau} = Z_{s,u}$ for all $u \in [s,t]$, one has
\[
\A_{s + \tau, t + \tau}(Z) = \A_{s, t}(Z)\;.
\]
\end{defn}

\begin{rem}\label{rem:timeloc}
Time translation-invariance should be viewed as a kind of locality which works within a single path.
Locality and time translation-invariance can be simultaneously strengthened
by requiring that, for all $Z,Z'$, if $Z'_{s + \tau,u + \tau} = Z_{s,u}$ for all $u \in [s,t]$, then
\[
\A_{s + \tau, t + \tau}(Z') = \A_{s, t}(Z)\;.
\]
We will call area processes satisfying this stronger condition \emph{strongly local}.
\end{rem}
	
Note that the Young integral, defined on $\bigcup_{\gamma > \frac 12} C^\gamma \times C^\gamma$ (in fact more generally $\bigcup_{\alpha + \beta > 1} C^\alpha \times C^\beta$) satisfies all the requirements defined up to now.

\begin{thm}\label{thm:main_determ}
\begin{enumerate*}[label=(\alph*)]
	\item \label{pt:local} There exists a 3-dimensional subspace $P\subset C^{1/2} \times C^{1/2}$ on which there is no local, homogeneous area process.
	\smallskip
	\\
	\item \label{pt:time} There exists a path $Z\in C^{1/2} \times C^{1/2}$ on which there is no time translation--invariant area process.
\end{enumerate*}
\end{thm}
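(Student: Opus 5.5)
The plan is to exploit self-similar curves in the spirit of the L\'evy C curve, whose pieces are scaled (and possibly rotated or reflected) copies of the whole curve. The Chen identity \eqref{eq:chen}, applied along a nested family of subdivisions, will then force a divergent sum of ``cross terms'' $\frac12(X_{s,u}Y_{u,t}-Y_{s,u}X_{u,t})$. This contradicts the requirement that $\A$ take values in $B$, i.e.\ that $\A$ be bounded. Both parts follow the same template.
\begin{enumerate*}[label=(\roman*)]
\item Choose a path, or a small family of paths, with a self-similar structure.
\item Use the invariance hypothesis (locality plus homogeneity, or time translation--invariance) to identify the areas of many subintervals with finitely many unknowns per scale.
\item Telescope Chen's identity over scales so that the unknowns cancel, or are controlled by boundedness, while the cross terms add up to a quantity growing linearly in the number of scales.
\end{enumerate*}

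For part \ref{pt:time}, I would build $Z$ on $[0,1]$ so that, at each dyadic level $n$, the increment processes on the level-$n$ intervals fall into a fixed finite number of classes, say indexed by a direction $\theta$ from a finite set. Each class consists of exact translates of one another, so time translation--invariance makes their areas equal; call the common value $a_n(\theta)$. Meanwhile the two halves of a level-$n$ interval belong to different classes, which makes the cross term nonzero. With a L\'evy-C-type construction the cross term equals $c\,2^{-n}$ for a fixed $c\neq 0$ of constant sign. Chen then gives a linear recursion of the form
\begin{equation*}
a_n(\theta)=a_{n+1}(\theta_-)+a_{n+1}(\theta_+)+c\,2^{-n}.
\end{equation*}
To make the constant-size contributions pile up, I would not use pure dyadic halving. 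Instead I would arrange that a single interval at scale $2^{-n}$ is subdivided into about $2^{n}$ translates of one pattern of scale $2^{-n}$, laid out in a staircase so that the cross terms between consecutive blocks do not cancel. Hölder-$\frac12$ regularity is compatible with this because each block has displacement of order $2^{-n/2}$. The total area over the big interval is then $2^{n}\cdot(\text{bounded unknown})+\sum(\text{cross terms})$. Combining the levels, I would choose the combinatorics so that the unknowns appear with coefficients summing to zero, by comparing two decompositions of the same interval. The cross terms then yield $\sum_{k\le N} c$, which is unbounded. Gluing these scale-$2^{-n}$ gadgets, for $n=1,2,\dots$, into disjoint subintervals of $[0,1]$ of summable length produces a single $Z\in C^{1/2}\times C^{1/2}$.

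For part \ref{pt:local}, I would take $P$ to be the span of three paths: the L\'evy C curve $(X,Y)$ and a third path $W$ such that rotated or reflected copies of the pieces of $(X,Y)$ are realized, on the relevant subintervals, as increment processes of elements of $P$. Homogeneity with $a=0$ or $b=0$ gives $\A(0,Y)=\A(X,0)=0$, and this pins down the areas of pieces lying along a coordinate axis. Locality then transports the area of a piece of $(X,Y)$ to the corresponding piece of another element of $P$ whose area is known or is related by Chen. This again yields a recursion across scales in which the unknowns cancel and the cross terms, each of order one after accounting for the $2^{-n}$ scaling, accumulate without bound.

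The main obstacle is the combinatorial design: choosing the self-similar curve and the subspace $P$, or the gadget in $Z$, so that the invariance hypotheses identify enough areas that the unknowns cancel exactly, while the cross terms have a fixed sign and do not cancel. I would first try the L\'evy C curve directly, computing its cross term $\frac12(X_{s,u}Y_{u,t}-Y_{s,u}X_{u,t})$ at each level, and check whether a sign-consistent sum emerges. If rotations spoil the cancellation of unknowns, I would switch to a reflected variant, a ``Cesàro--Faber'' type curve, whose halves are related by reflection.
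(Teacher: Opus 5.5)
Your template (self-similarity, Chen's identity across scales, contradiction with boundedness of $\A$) is the right one. But in both parts the step you defer as ``the combinatorial design'' is the whole proof, and the concrete choices you do make would not work.

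\emph{Part (a).} Locality and homogeneity give exactly one piece of information: $\A_{u,v}(Z)=0$ on intervals where $Z$ moves along a coordinate axis (\autoref{lem:homog}). The L\'evy C curve has no such subintervals, so on it this constraint is empty. Nor can locality transport area between rotated or rescaled pieces. It only compares \emph{different paths on the same time interval}, whereas self-similarity relates \emph{different time intervals} of one path. Homogeneity, with $a,b\geq0$, gives no rotation invariance either. So the route through the limit curve and rotated copies cannot get started. The missing idea is to use the axis-parallel polygonal iterates $Z^{2m}$:
\begin{itemize}
\item On them, vanishing on axis paths plus Chen forces $\A$ to be the classical area \eqref{eq:classical_area}, so there are no unknowns at all.
\item The two-step self-similarity gives $\A^{\cl}_{0,1}(Z^{2m})=-m/2$. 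This is unbounded in $m$, while the $\frac12$-H\"older norms stay bounded.
\item Close $Z^{2^{n+1}}$ with a horizontal segment, rescale by $2^{-n/2}$ and place it on $[2^{-n-1},2^{-n}]$. This yields one $Z\in C^{1/2}\times C^{1/2}$ made of loops of area $-\frac12$ each, so $\A_{2^{-n},1}(Z)=-n/2$.
\end{itemize}
The subspace $P$ is then the span of this $Z$ and the two unit-speed axis paths.

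\emph{Part (b).} Your recursion is essentially right, but your mechanism for extracting a divergence is not.

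First, the staircase fails the regularity requirement. Take $k$ consecutive blocks of time-length $2^{-n}$, each displaced by order $2^{-n/2}$ in a consistent direction. They move by order $k2^{-n/2}$ rather than $\sqrt k\,2^{-n/2}$, so the path is not $\frac12$-H\"older uniformly. You should instead concatenate closed loops: a scaled L\'evy C piece followed by its rotation by $\pi$. This also kills the inter-block cross terms.

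Second, the unknowns are neither cancelled nor usefully controlled by $|a_n|\leq\sup|\A|$, which is worthless once multiplied by about $2^n$ copies. The right mechanism is as follows:
\begin{itemize}
\item Translation-invariance identifies equal pieces throughout the whole path. Summing the two-step recursion over the four rotation classes gives $b_n=4b_{n+2}-2^{-n+1}$, i.e.\ $2^nb_n=2^4b_4+n-4$.
\item So fine-scale loops have area of order $n2^{-n}$, a factor $n$ above naive scaling, with a single global unknown $b_4$.
\item Placing $K_m=2^{N_m-m-2}$ such loops at scale $N_m=2^{2m+1}$ on $[2^{-m-1},2^{-m}]$ gives block area $2^{-m-3}(2^4b_4+N_m-4)\to\infty$.
\end{itemize}
The unknown is thus dominated by the accumulated cross terms, not cancelled.
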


\begin{rem}
The assumption that $\A$ takes values in $B\subset \R^\triangle$ is crucial for the non-existence results.
We show below in \autoref{cor:algebraic_area}
that there do exist area processes, with even larger domain, $\A \colon C[0,1] \times C[0,1] \to \R^\triangle$,
which satisfy Chen's identity \eqref{eq:chen}, are bilinear and strongly local in the sense of \autoref{rem:timeloc}.
\end{rem}

\begin{proof}[Proof of \autoref{thm:main_determ} \ref{pt:local}]
Let $P$ be the 3-dimensional subspace of $C^{1/2} \times C^{1/2}$ spanned by the path $Z$ given by \autoref{thm:axis} below and the two axis paths $X_t = (t,0)$ and $Y_t = (0,t)$.
By \autoref{lem:homog} below, every local, homogeneous area process vanishes on axis paths.
However, by \autoref{thm:axis}, there does not exist an area process on $Z$ which vanishes on axis paths.
\end{proof}

In the above proof, we used the following definition.

\begin{defn}\label{def:axis}
Let $e_1, e_2$ be the standard basis of $\R^2$.
We say that an area process $\A$ on $Z \in C^\gamma \times C^\gamma$ \emph{vanishes on axis paths} if for any interval $[u,v]\subset[0,1]$ such that, for all $t \in [u,v]$,
$Z_t = Z_u + (t-u)ae_i$ for some $i=1,2$ and $a \in \R$,
one has $\A_{u,v}(Z) = 0$.
\end{defn}

\begin{lem}\label{lem:homog}
Suppose $P\subset C^\gamma \times C^\gamma$ contains every axis path $X_t = (ta,0)$ and $Y_t = (0,ta)$ for $a \in \R$.
Then every local, homogeneous area process $\A$ on $P$
vanishes on axis paths.
\end{lem}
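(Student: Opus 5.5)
We will show that on any interval $[u,v]$ along which $Z$ moves as an axis path, locality lets us replace $Z$ by a genuine axis path in $P$, and homogeneity with a zero scaling factor then kills the area.

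Fix $Z\in P$ and an interval $[u,v]\subset[0,1]$ on which $Z_t = Z_u + (t-u)ae_i$ for some $i\in\{1,2\}$ and $a\in\R$. Take $i=1$; the case $i=2$ is symmetric. The local increment process of $Z$ on $[u,v]$ is then
\[
Z_{u,[u,v]} = \big([u,v]\ni t\mapsto ((t-u)a,0)\big)\;.
\]
This coincides with the local increment process on $[u,v]$ of the axis path $X^a_t=(ta,0)$, since $X^a_{u,t} = ((t-u)a,0)$. By hypothesis $X^a\in P$, so locality (\autoref{def:local}) gives $\A_{u,v}(Z)=\A_{u,v}(X^a)$. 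It therefore suffices to show that $\A_{u,v}(X^a)=0$.

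Write $X^a=(aT,0)$, where $T_t=t$. Apply \autoref{def:homog} to the pair $(aT,0)$ with scalars $(1,0)$. The requirement that $(1\cdot aT,\,0\cdot 0)=(aT,0)$ lie in $P$ holds, so
\[
\A_{u,v}(aT,0)=\A_{u,v}(1\cdot aT,0\cdot 0)=1\cdot 0\cdot\A_{u,v}(aT,0)=0\;.
\]
If $i=2$, we instead use the scalars $(0,1)$ on $(0,aT)$. This is consistent with the remark that only $a=0$ or $b=0$ is ever used. Note that this step does not even need the sign condition $a\geq 0$ on the path coefficient, because $a$ sits inside the path and not among the scalars.

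The only point requiring care is matching conventions. \autoref{def:homog} applies to pairs $(X,Y)\in P$, where $X$ and $Y$ are the two scalar components, and scaling one component by $0$ must return the same element. With $Y=0$ it does, so the argument is a two-line application of the definitions and no real obstacle is expected.
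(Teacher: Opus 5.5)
Your proof is correct and follows the paper's argument. Homogeneity with scalars $(1,0)$ (resp.\ $(0,1)$) forces $\A$ to vanish on the axis paths $(ta,0)$ and $(0,ta)$, and locality transfers this to any interval on which $Z$ moves along a coordinate axis; you apply locality before homogeneity rather than after, which makes no difference.
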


\begin{proof}
Let $\A$ be a homogeneous area process on $P$.
Then for an axis path $(X,Y)$ of the form $(X_t,Y_t) = (ta,0)$, where $a\in\R$, by homogeneity of $\A$,
\[
\A_{s,t}(X,Y) = \A_{s,t}(X,0\cdot Y) = 0
\;.
\]
The same applies to axis paths of the form $(X_t,Y_t) = (0,ta)$.
Then for a path $Z$ and interval $[u,v]$ as in \autoref{def:axis}, by locality of $\A$, one also has $\A_{u,v}(Z) = 0$.
\end{proof}

\begin{thm}\label{thm:axis}
There exists $Z \in C^{1/2}\times C^{1/2}$ on which there is no area process that vanishes on axis paths.
\end{thm}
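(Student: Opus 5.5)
The plan is to build $Z$ as a countable concatenation of closed polygonal loops made of axis-parallel segments. Each loop will have a uniformly bounded $\frac12$-H\"older constant, but the loops will carry a total signed area that diverges. For an area process vanishing on axis paths, Chen's identity \eqref{eq:chen} then forces $\A_{0,t}(Z)$ to equal the classical (polygonal) L\'evy area whenever $Z|_{[0,t]}$ consists of finitely many axis segments. This would contradict $\A(Z)\in B$.

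\emph{Step 1: algebraic reduction.} Suppose $[u,v]$ is split as $u=t_0<\dots<t_N=v$ with $Z$ affine along a coordinate axis on each $[t_{i-1},t_i]$. Iterating \eqref{eq:chen} and using $\A_{t_{i-1},t_i}(Z)=0$ gives
\[
\A_{u,v}(Z)=\frac12\sum_{i<j}\big(X_{t_{i-1},t_i}Y_{t_{j-1},t_j}-Y_{t_{i-1},t_i}X_{t_{j-1},t_j}\big),
\]
which is exactly the L\'evy area of the polygon. In particular, if $Z|_{[u,v]}$ is a closed axis polygon, then $\A_{u,v}(Z)$ is its enclosed signed area. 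If $Z_{u,v}=0$ and $Z_{v,w}=0$, the cross term vanishes and $\A_{u,w}=\A_{u,v}+\A_{v,w}$.

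\emph{Step 2: the building block.} I would use the $n$-th L\'evy C curve approximation $C_n$. This is the polygon obtained from a unit segment by $n$ times replacing every segment by the two legs of an isosceles right triangle, always turning in the same rotational direction. It has $2^n$ segments of length $2^{-n/2}$, each traversed in time $2^{-n}$. I would take $n$ even and rotate so that all segments are axis-parallel.

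Two facts are needed, uniformly in $n$:
\begin{enumerate*}[label=(\roman*)]
\item $\|C_n\|_{\Hol{1/2}}\lesssim 1$ on $[0,1]$;
\item the signed area of $C_n$ closed up by its chord is $\gtrsim n$.
\end{enumerate*}
Fact (ii) holds because each refinement level adds $2^{m}$ triangles of area $\frac14 2^{-m}$, all with the same orientation, since every triangle is on the same side of its segment. Each level thus contributes exactly $\frac14$ of signed area.

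For fact (i), the idea is as follows. For $|s-t|\approx 2^{-m}$, the points $C_n(s)$ and $C_n(t)$ lie in at most two adjacent level-$m$ pieces. Each such piece is a similar copy of $C_{n-m}$ scaled by $2^{-m/2}$, and all $C_k$ stay in a fixed bounded set. This self-similarity argument is the step I expect to be the main (though standard) obstacle: the bound must be checked carefully so that it is uniform in $n$.

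I would then close $C_n$ into a loop $L_n$ by returning to the origin along two axis segments, say in time $1$. This preserves (i) and changes the area by $O(1)$, so the signed area of $L_n$ is still $\gtrsim n$ for $n$ large.

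\emph{Step 3: concatenation.} I would place the rescaled loop $2^{-k/2}L_{n_k}(2^{k}(\,\cdot\,-s_k))$ on the interval $[s_k,s_{k+1}]$, where $s_k=1-2^{-k+1}$, and set $Z_1=0$. Brownian scaling preserves the $\frac12$-H\"older constant. The area of the $k$-th loop is $\gtrsim 2^{-k}n_k$, so I would choose $n_k=2^k$ (even). Because every loop returns to $0$, $Z$ is continuous at $1$. Gluing finitely or countably many $\frac12$-H\"older pieces with uniform constant, each starting and ending at $0$, yields a path in $C^{1/2}\times C^{1/2}$: for $s<t$ in different pieces, split $Z_{s,t}=Z_{s,s_j}+Z_{s_{j'},t}$, and the middle loops contribute $0$.

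Finally, by Steps 1 and 3, $\A_{0,s_K}(Z)=\sum_{k<K}\operatorname{area}(L_{n_k})2^{-k}\gtrsim K\to\infty$. This contradicts $\sup_{\triangle}|\A_{s,t}(Z)|<\infty$.
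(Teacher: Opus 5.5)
Your proposal is correct and follows essentially the same route as the paper. Both arguments close even L\'evy C iterates into axis-parallel loops and Brownian-rescale them onto dyadic blocks, with iterate depth $2^k$ chosen so that each block carries signed area of order one and fixed sign. Chen's identity, together with vanishing on axis pieces, then forces $|\A|$ to be unbounded.

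The differences are cosmetic:
\begin{enumerate*}[label=(\roman*)]
\item you count the area level by level via the added triangles ($-\frac14$ per refinement) rather than via the paper's two-step recursion $b_{n+2}=b_n-\frac12$;
\item you accumulate the blocks at $1$ instead of at $0$;
\item for even $n$ the iterate already ends at $(1,0)$, so a single horizontal segment closes it.
\end{enumerate*}
One trivial slip: $L_n$ should be parametrised on a unit time interval (e.g.\ $C_n$ on $[0,\frac12]$ and the return segment on $[\frac12,1]$) for your rescaling onto $[s_k,s_{k+1}]$ to match.
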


\begin{proof}
Consider first any curve $(X,Y)$ that is piecewise linear on $s= u_0 < \ldots < u_n = t$ with paths that are axis-parallel line segments.
Then for any area process $\A$ on $(X,Y)$ that vanishes on axis paths,
it follows from Chen's identity that the area $\A_{s,t}(X,Y)$ is given classically by
\begin{equation}\label{eq:classical_area}
\A_{s,t}(X,Y) = \frac12\sum_{0 \leq i < j \leq n-1} (X_{u_i,u_{i+1}} Y_{u_j,u_{j+1}} - Y_{u_i,u_{i+1}} X_{u_j,u_{j+1}})
\;.
\end{equation}
For $n\geq 0$, let $Z^n=(X^n,Y^n)$ denote the $n$-th polygonal approximation to the L{\'e}vy C curve,
parametrised at constant speed. Concretely, $Z^n$ is given by a Lindenmayer system which works as follows: starting from the line segment from $(0,0)$ to $(1,0)$, recursively replace each line segment $\ell$ by the two equal sides of an isosceles right angled triangle for which $\ell$ is the hypotenuse.
The orientation of the triangle is chosen so that the first line segment makes an angle of $\pi/4$ with $\ell$.
See \autoref{fig:levyC} for the first few iterates.
Then every even iterate $Z^{2m}$ is piecewise linear with axis-parallel line segments.

\begin{figure}[h]
\centering

\begin{minipage}[t]{0.24\textwidth}\centering
	\includegraphics[width=\linewidth]{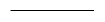}
\end{minipage}\hfill
\begin{minipage}[t]{0.24\textwidth}\centering
	\includegraphics[width=\linewidth]{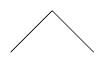}
\end{minipage}\hfill
\begin{minipage}[t]{0.24\textwidth}\centering
	\includegraphics[width=\linewidth]{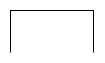}
\end{minipage}\hfill
\begin{minipage}[t]{0.24\textwidth}\centering
	\includegraphics[width=\linewidth]{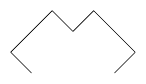}
\end{minipage}

\vspace{0.8ex}

\begin{minipage}[t]{0.24\textwidth}\centering
	\includegraphics[width=\linewidth]{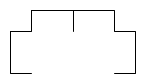}
\end{minipage}\hfill
\begin{minipage}[t]{0.24\textwidth}\centering
	\includegraphics[width=\linewidth]{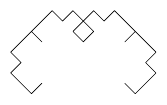}
\end{minipage}\hfill
\begin{minipage}[t]{0.24\textwidth}\centering
	\includegraphics[width=\linewidth]{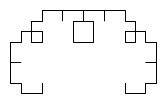}
\end{minipage}\hfill
\begin{minipage}[t]{0.24\textwidth}\centering
	\includegraphics[width=\linewidth]{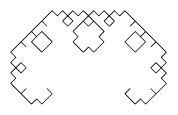}
\end{minipage}

\vspace{0.8ex}

\begin{minipage}[t]{0.24\textwidth}\centering
	\includegraphics[width=\linewidth]{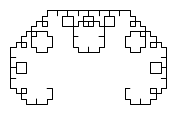}
\end{minipage}\hfill
\begin{minipage}[t]{0.24\textwidth}\centering
	\includegraphics[width=\linewidth]{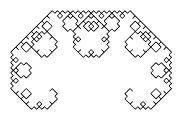}
\end{minipage}\hfill
\begin{minipage}[t]{0.24\textwidth}\centering
	\includegraphics[width=\linewidth]{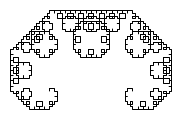}
\end{minipage}\hfill
\begin{minipage}[t]{0.24\textwidth}\centering
	\includegraphics[width=\linewidth]{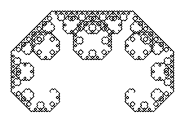}
\end{minipage}

\caption{The first 12 Lindenmayer iterates $Z^0, Z^1, \ldots, Z^{11}$ converging to the L{\'e}vy C curve.}
\label{fig:levyC}
\end{figure}

For each $m\geq 0$,
let $\widetilde Z^m$ denote the closed polygon obtained by following $Z^{2m}$ at constant speed from $(0,0)$ to $(1,0)$
over the time interval $[0,\frac12]$, and then following at constant speed the horizontal segment from $(1,0)$ back to $(0,0)$ over the time interval $[\frac12,1]$.

Finally, define the path $Z\colon [0,1]\to\R^2$ by setting $Z_0=(0,0)$ and, for $t\in [2^{-n-1},2^{-n}]$,
\[
Z_t
=
2^{-n/2} \widetilde Z^{2^{n}}_{2^{n+1}t-1}\;.
\]
We first claim that $Z\in C^{1/2}\times C^{1/2}$.
Indeed, since $\widetilde Z^{2^{n}}$ is a loop, $Z$ is continuous and vanishes at every dyadic point $2^{-n}$.
Moreover, it is easy to show that
\begin{equation*}
\sup_{m\geq 0}\|\widetilde Z^m\|_{\Hol{\frac12}}<\infty
\;.
\end{equation*}
Since the scaling $\lambda X_{\lambda^2 t}$ preserves the $1/2$-H\"older norm,
and since $Z$ vanishes at every dyadic point $2^{-n}$,
it readily follows that $Z\in C^{1/2}\times C^{1/2}$.

Suppose, towards a contradiction, that there exists an area process $\A$ on $Z$ that vanishes on axis paths.
We denote by $\A^{\cl}$ the classical signed area defined on piecewise axis-parallel paths by identity \eqref{eq:classical_area}.
Note that $\A^\cl$ is defined on $Z^{2n}$ and $\widetilde{Z}^{2n}$ for all $n\geq 0$.
We now define for even $n\geq 0$
\[
b_n = \A^\cl_{0,1}(Z^n)\;.
\]
Since $Z^0$ is horizontal, we have $b_0=0$.
We claim that, for even $n \geq 0$,
\begin{equation}\label{eq:two-step-levy}
b_{n+2}=b_n-\frac12
\;.
\end{equation}
Indeed, $Z^{n+2}$ is the concatenation of four copies of $Z^n$, each scaled by the factor $1/2$, and rotated respectively by
$
\frac{\pi}{2}, 0, 0,  -\frac{\pi}{2}$.
Since the classical area \eqref{eq:classical_area} is invariant under orthogonal transformations,
\[
\A^\cl_{j/4,(j+1)/4}(Z^{n+2})=\frac{b_n}{4}
\;,
\qquad j=0,1,2,3\;. 
\]
Since the signed area formed by the four line segments $(Z_0,Z^{n+2}_{1/4}), \ldots, (Z^{n+2}_{3/4},Z^{n+2}_{1})$ is $-1/2$,
we obtain by Chen's identity \eqref{eq:chen}
\[
b_{n+2}
=
\sum_{j=0}^3 \A^\cl_{j/4,(j+1)/4}(Z^{n+2})
-
\frac12 
=
b_n - \frac12\;,
\]
which proves \eqref{eq:two-step-levy}.
It follows by induction that, for all $m\geq 0$,
\begin{equation*}
b_{2m}=-\frac m2
\;.
\end{equation*}
Since adding the horizontal segment does not change the area, it follows that, for every $m\geq 0$,
\begin{equation}\label{eq:closed-even-levy-area}
\A^\cl_{0,1}(\widetilde Z^m)=\A^\cl_{0,1}(Z^{2m})=-\frac m2\;.
\end{equation}

Since each restriction $Z|_{[2^{-n-1},2^{-n}]}$ is piecewise linear with axis-parallel pieces, $\A_{2^{-n-1},2^{-n}}(Z)$ is given by the classical signed area \eqref{eq:classical_area}.
Since spatial scaling by $2^{-n/2}$ scales the area by $2^{-n}$,
it follows from \eqref{eq:closed-even-levy-area} that
\[
\A_{2^{-n-1},2^{-n}}(Z)
=
2^{-n}\A^\cl_{0,1}(\widetilde Z^{2^{n}})
=
-2^{-n}\frac{2^{n}}{2}
=
-1/2
\;.
\]
Since $Z|_{[2^{-n-1},2^{-n}]}$ is a closed loop, it follows from Chen's identity that, for every $n\geq 1$,
\[
\A_{2^{-n},1}(Z)
=
\sum_{k=0}^{n-1}\A_{2^{-k-1},2^{-k}}(Z)
=
-n/2
\;.
\]
This contradicts the fact that $\A(Z)\in B$, since elements of $B$ are bounded on $\triangle$.
\end{proof}

\begin{proof}[Proof of \autoref{thm:main_determ}\ref{pt:time}]
We construct a path $Z\in C^{1/2}\times C^{1/2}$ such that no area process on $Z$ can be time translation--invariant.

Let $C\colon[0,1]\to\R^2$ be the L{\'e}vy C curve, i.e. $C=\lim_{n\to\infty} Z^n$ where $Z^n$ is the $n$-th polygonal approximation to the L{\'e}vy C curve defined in the proof of \autoref{thm:axis}. Note that
$C_0=(0,0)$ and  $C_1=(1,0)$.
For $n\geq 1$, let $L^n\colon[0,2^{-n+1}]\to\R^2$ be the loop obtained by concatenating two paths:
on $[0,2^{-n}]$ we traverse the scaled copy
\[
t\mapsto 2^{-n/2}C(2^n t)\;,
\]
and on $[2^{-n},2^{-n+1}]$ we traverse its image under the rotation by $\pi$, translated so that the resulting path is a loop.
Thus $L^n$ has time-length $2^{-n+1}$ and spatial size $2^{-n/2}$.
(Contrary to the curves $\widetilde Z^n$ in the proof of \autoref{thm:axis}, we do not close $L^n$ with straight line segments.)

For $m\geq 0$, define the sequences
\[
N_m = 2^{2m+1}\;,\quad K_m = 2^{N_m-m-2}\geq 1\;.
\]
Since
\[
K_m\,2^{1-N_m}=2^{-m-1}\;,
\]
we may define a path $Z\colon[0,1]\to\R^2$ by setting $Z_0=0$ and, on each dyadic block $[2^{-m-1},2^{-m}]$
concatenating $K_m$ copies of the loop $L^{N_m}$.

Since each $L^{N_m}$ is a loop, $Z$ vanishes at the endpoints of all copies and in particular at every dyadic point $2^{-m}$.
Moreover, we claim that $Z\in C^{1/2}\times C^{1/2}$.
Indeed, on each half of $L^n$, the path is a translate of
\[
t\mapsto 2^{-n/2}C(2^n t)\;,
\]
which has the same $\frac12$-H\"older seminorm as $C$.
Since $Z$ vanishes on the endpoint of every copy of $L^{N_m}$, the $\frac12$-H\"older seminorm of $Z$ is bounded by a multiple of that of $C$. Thus $Z\in C^{1/2}\times C^{1/2}$.

Assume now, for contradiction, that there exists a time translation--invariant area process $\A$ on $Z$.
For even $n\geq 4$ and $r\in\Z/4\Z$, let $\cI_n^r$ denote the collection of intervals
$I=[s,s+2^{-n}]$ such that the restricted path
\[
u\mapsto Z_{s+u}-Z_s\;,\qquad u\in[0,2^{-n}]\;,
\]
coincides with
\[
u\mapsto 2^{-n/2}R_{r\pi/2}C(2^n u)\;,
\]
where $R_\theta$ is the rotation of the plane by $\theta \in \R/2\pi\Z$.
By construction, since $N_0=2$, for every even $n\geq 4$ and every $r\in\Z/4\Z$, the set $\cI_n^r$ is non-empty.

If $I,J\in\cI_n^r$, then the corresponding restricted paths are equal up to a time shift inside the path $Z$.
Therefore $\A_I(Z) = \A_J(Z)$ by translation-invariance of $\A$, and we denote
\[
a_n^r =\A_I(Z) \;.
\]

Fix an even $n\geq 4$ and $r \in \Z/4\Z$, and let $I\in\cI_n^r$.
By the two-step self-similarity of the L{\'e}vy C curve, the restriction of $Z$ to $I$ is the concatenation of four subintervals of length $2^{-n-2}$ belonging respectively to the classes
\[
r+1\;,\quad r\;,\quad r\;,\quad r-1 \;.
\]
Applying Chen's identity on $I$, we obtain
\begin{equation}\label{eq:an_recursion}
a_n^r
=
a_{n+2}^{r+1}+2a_{n+2}^{r}+a_{n+2}^{r-1}+2^{-n}c
\;,
\end{equation}
where $c=-1/2$ is the signed area of the two-step L{\'e}vy pattern (i.e. the signed area of the curve $Z^2$ in \autoref{fig:levyC}).
Now define
\[
b_n = \sum_{r\in\Z/4\Z} a_n^r\;.
\]
The recursion \eqref{eq:an_recursion}, with $c=-1/2$, implies that
\begin{equation}\label{eq:bn_recursion}
b_n = 4b_{n+2}-2^{-n+1}
\;,
\end{equation}
or equivalently that $B_n = 2^nb_n$ satisfies the recursion
$
B_n= B_{n+2}-2
$,
which implies $B_n = B_4 + n-4$, and therefore
\begin{equation}\label{eq:bn_explicit}
b_n = 2^{-n} (2^4 b_4 + n-4)
\;.
\end{equation}
Observe now that 
a copy of the loop $L^n$ is obtained by concatenating one interval of type $r=0$ and one interval of type $r=2$,
therefore by Chen's identity, the area of $L^n$ is
\[
a_n^0+a_n^2 = 2\sum_{r\in\Z/4\Z} a_{n+2}^r + 2^{-n+1}c
=
2b_{n+2} - 2^{-n}
=b_n/2
\;,
\]
where again $c=-1/2$ and we used \eqref{eq:bn_recursion} in the last equality.
For $m\geq 1$, since $N_m\geq 4$ and
$Z|_{[2^{-m-1},2^{-m}]}$ is $K_m = 2^{N_m-m-2}$ concatenations of the loop $L^{N_m}$,
we thus obtain by Chen's identity, using \eqref{eq:bn_explicit},
\begin{align*}
\A_{2^{-m-1},2^{-m}}(Z)
= K_m b_{N_m} / 2
&=
2^{N_m-m-3} \bigl(2^{-N_m} (2^4 b_4 + N_m-4 ) \bigr)
\\
&=2^{-m-3} (2^4 b_4 + N_m-4 )
\;.
\end{align*}
Since we have chosen $N_m=2^{2m+1}$, the final term is larger than $1$ for all $m \geq m_0$ (where $m_0$ depends only on $b_4$), which implies
that $\A_{2^{-m-1},2^{-m_0}}(Z) \geq m-m_0+1$,
which contradicts the fact that $\A(Z)\in B$.
\end{proof}

The preceding proofs use modified versions of the L{\'e}vy C curve and its polygonal approximations.
If one instead makes stronger assumptions on $\A$, namely homogeneity, and parametrisation-, rotation-, and spatial translation-invariance, then the unmodified L{\'e}vy C curve already gives a contradiction.
More precisely, suppose there exists a homogeneous area process $\A\colon C^{1/2}\times C^{1/2} \to B$
which is invariant under the addition of constants in the sense that $\A(a + Z) = \A(Z)$ for $a \in \R^2$, and parametrisation-invariant in the sense that, for every affine increasing bijection $\phi\colon [0,1]\to [s,t]$,
\[
	\A_{0,1}(Z_\phi)=\A_{s,t}(Z)
\]
where $Z_\phi = Z \circ \phi$. These properties imply strong locality in the sense of \autoref{rem:timeloc}: if $Z'_{s + \tau,u + \tau} = Z_{s,u}$ for all $u \in [s,t]$ then
\[
\A_{s,t}(Z) = \A_{0,1}(Z_\phi) = \A_{0,1}((Z_s - Z'_{s+\tau}) + Z'_{\phi(\cdot) +\tau}) = \A_{0,1}(Z'_{\phi(\cdot) +\tau}) = \A_{s+\tau, t+\tau}(Z').
\]
Assume further that $\A$ is rotation-invariant 
in the sense that $\A_{s,t}(R_\theta Z) = \A_{s,t}(Z)$, where we recall that $R_\theta$ is the rotation of the plane by $\theta \in \R/2\pi\Z$.
Let $C = (X,Y)\in C^{1/2}\times C^{1/2}$ be the L{\'e}vy C curve.
Then two self-similar halves of $C$ have the same area as $C$, scaled by the factor $\frac12$,
hence Chen's identity implies
\[
\begin{split}
	\A_{0,1}(C)
	&=
	\A_{0,\frac12}(C)
	+
	\A_{\frac12,1}(C)
	+
	\frac12
	\bigl(
	X_{0,\frac12} Y_{\frac12,1}
	-
	Y_{0,\frac12} X_{\frac12,1}
	\bigr)
	\\
	&=
	\frac12 \A_{0,1}(C)
	+
	\frac12 \A_{0,1}(C)
	-
	\frac14
	\;,
\end{split}
\]
which is a contradiction. More generally, for a self-similar curve with $m$ pieces and spatial dilation scale $\sqrt \lambda$, the same computation gives
\(
	(1-m\lambda)\A_{0,1}=a,
\)
where $a$ is the polygonal area contribution coming from Chen's identity; the L{\'e}vy C curve is the critical case $m\lambda=1$ with $a\neq0$. Other fractals could be constructed with $\lambda m > 1$, and for such fractals the optimal H\"older regularity (which is easily seen to be $\log(1/\lambda)/(2 \log m)$) would be less than $1/2$, but parametrisation invariance would not immediately yield a contradiction (but arguments similar to those of \autoref{thm:main_determ} still would).

\subsection{Non-existence of local sewing}
\label{subsec:sewing}

We now discuss consequences of \autoref{thm:main_determ} and \autoref{thm:axis} for sewing maps.
Existence of (linear) sewing maps in different contexts was shown in \cite{Gub04, Feyel06_sewing, BZ22_sewing}.
We give below an abstract definition of such maps and deduce non-existence results under minimal regularity assumptions.
The link between sewing maps and area processes (and thus rough path lifts)
is discussed after \autoref{def:weak_sewing} below.

Denote
\begin{equation}\label{eq:triangle_3_def}
\triangle^3[s,t]
=
\{(a,b,c)\in [s,t]^3 \,:\, a\leq b\leq c\}\;,
\qquad
\triangle^3
= \triangle^3[0,1]\;,
\end{equation}
and set
\[
\cC_2 = C(\triangle)\;,
\qquad
\cC_3 = C(\triangle^3)\;.
\]
For \(k=2,3\) and \(\gamma>0\), define
\[
\cC_k^\gamma
\coloneqq
\left\{
\Xi\in \cC_k
\,:\,
\|\Xi\|_\gamma<\infty
\right\}
\;,
\qquad
\|\Xi\|_\gamma
\coloneqq
\sup_{0\leq t_1 < \cdots < t_k\leq 1}
\frac{|\Xi_{t_1,\ldots,t_k}|}{(t_k-t_1)^\gamma}
\;.
\]
Define further
\[
\cC_2^{1,\log}
\coloneqq
\left\{
A\in \cC_2
\,:\,
\|A\|_{1,\log}<\infty
\right\}\;,
\qquad
\|A\|_{1,\log}
\coloneqq
\sup_{0\leq s<t\leq 1}
\frac{|A_{s,t}|}{(t-s)(1+|\log(t-s)|)}
\;.
\]
Define the linear map $\delta_2 \colon \R^{\triangle^2} \to \R^{\triangle^3}$ by
\begin{equation}\label{eq:delta_def}
(\delta_2 A)_{s,u,t}
=
A_{s,t}-A_{s,u}-A_{u,t}
\;.
\end{equation}
Note that $\delta_2$ maps $\cC_2$ into $\cC_3$.

\begin{defn}
A \emph{\(\gamma\)-sewing map} is a right inverse to \(\delta_2\) on $\cC_3^\gamma$,
namely a map
\(
\Lambda \colon
\delta_2(\cC_2)\cap \cC_3^\gamma
\to
\cC_2
\)
such that
\(
\delta_2\Lambda = \mathbbm 1
\)
with \(\Lambda\Xi\in \cC_2^\gamma\) for \(\gamma\neq 1\), and
\(\Lambda\Xi\in \cC_2^{1,\log}\) for \(\gamma=1\).
\end{defn}

\begin{rem}\label{rem:integration}
Denoting \(\cC_1 = C[0,1]\) and defining \(\delta_1\colon \cC_1 \to \cC_2\) by
\(
(\delta_1 f)_{s,t} = f_t - f_s
\),
the maps \(\cC_1 \xrightarrow{\delta_1} \cC_2 \xrightarrow{\delta_2} \cC_3 \cdots\)
are the first terms in an exact cochain complex, see \cite[\S 2.2]{Gub04}.
The sequence $(\cC_\bullet^\gamma, \delta_\bullet)$ is a sub-cochain complex. A linear $\gamma$-sewing map $\Lambda$ yields a linear \emph{integration map} $\cI \colon \delta_2^{-1}(\cC_3^\gamma) \to \cC_1$ by
\begin{equation}\label{eq:integration_def}
(\cI A)_0 = 0\;, \qquad \delta_1 \cI  A  = A - \Lambda \delta_2 A\;,
\end{equation}
which satisfies 
\(\cI \delta_1 f = f - f_0\) 
and similar analytic properties as $\Lambda$. Conversely, an integration map defines a $\gamma$-sewing map by the identity \eqref{eq:integration_def} applied to $\Xi=\delta_2A \in \delta_2(\cC_2) \cap \cC_3^\gamma$ (this definition does not depend on the choice of $A \in \delta_2^{-1}(\Xi)$).
The sewing/integration maps can be viewed as (partial) functions defining a splitting
\[
\begin{tikzcd}
\cC_1 \arrow[r, "\delta_1"]  &\cC_2 \arrow[r, "\delta_2"] \arrow[l, bend left, rightharpoonup, "\cI"] &\cC_3 \arrow[l, bend left, rightharpoonup, "\Lambda"]
\end{tikzcd}
\;.
\]
This link between sewing and integration maps is central in rough path theory \cite{Gub04,Feyel06_sewing, BZ22_sewing,FH20}.
We will exploit this link in the construction of strongly local sewing maps under different analytic conditions in the proof of \autoref{thm:algebraic_local_sewing}.
\end{rem}

For \(\gamma>1\), there exists a unique \(\gamma\)-sewing map; it is given by Riemann--Stieltjes--Young integration and is a continuous linear map, see \cite[Lemma~2.1]{Feyel06_sewing} or \cite[Lemma~4.2]{FH20}.
For \(\gamma\leq 1\), \cite[Theorem~3.2 and Proposition~3.4]{BZ22_sewing}, building on \cite{LV07, TaZa20}, construct continuous linear sewing maps, but their construction is non-local: the value of \(\Lambda\Xi\) on an interval \([s,t]\) may depend on values of \(\Xi\) outside \(\triangle^3[s,t]\).
It was left open, see \cite[Remark~2.10]{BZ22_sewing}, whether one can choose such a sewing map locally.

\autoref{thm:main_determ} and \autoref{thm:axis} give a negative answer to this question for all $\gamma \leq 1$ under the natural condition that $\Lambda(0) = 0$, see \autoref{cor:local_sewing}.
In fact, this and \autoref{cor:time_sewing} prove much stronger results:
any right-inverse \(\delta_2\Lambda=\mathbbm 1\) taking values in the space of bounded functions cannot vanish locally at zero or be time translation--invariant.
The following definition makes these notions precise.

\begin{defn}\label{def:weak_sewing}
Recall the set $B\supset \cC_2$ from \eqref{eq:B_def}.
A \emph{weak $\gamma$-sewing map} is a map $\Lambda \colon \delta_2(\cC_2) \cap \cC_3^\gamma \to B$ such that $\delta_2 \Lambda = \mathbbm 1$.
We say that
\begin{enumerate}[label=(\roman*)]
\item
$\Lambda$ is \emph{local} if for all $(s,t) \in \triangle$ and $\Xi, \Xi' \in \delta_2(\cC_2) \cap \cC_3^\gamma$ such that $\Xi|_{\triangle^3[s,t]} = \Xi'|_{\triangle^3[s,t]}$, it follows that $\Lambda \Xi|_{\triangle[s,t]} = \Lambda \Xi'|_{\triangle[s,t]}$,

\item
$\Lambda$ \emph{vanishes locally at zero}
if for all $\Xi \in \delta_2(\cC_2) \cap \cC_3^\gamma$ and $(s,t)\in\triangle$ such that $\Xi|_{\triangle^3[s,t]} = 0$,
one has $\Lambda \Xi|_{\triangle[s,t]} = 0$,

\item \label{pt:time_translation_def}$\Lambda$ is \emph{time translation--invariant} if for all $(s,t) \in \triangle$ and $\tau >0$ such that $t+\tau\leq 1$ and $\Xi \in \delta_2(\cC_2) \cap \cC_3^\gamma$
such that $\Xi_{a,b,c} = \Xi_{a+\tau,b+\tau,c+\tau}$ for all $(a,b,c)\in\triangle^3[s,t]$,
one has $(\Lambda \Xi)_{s,t} = (\Lambda \Xi)_{s+\tau,t+\tau}$.
\end{enumerate}
\end{defn}

Note that a $\gamma$-sewing map is always a weak $\gamma$-sewing map, but the converse is not true.
Given $\gamma \leq 1$ and $(X,Y) \in C^{\gamma/2} \times C^{\gamma/2}$, consider the \say{germ} defined by
\[
A \in \cC_2, \quad A_{s,t} = -\frac 12 (X_s Y_{s,t} - Y_s X_{s,t})\;.
\]
A direct computation shows that
\[
(\delta_2 A)_{s,u,t} =  \frac 12 (X_{s,u}Y_{u,t} - Y_{s,u}X_{u,t})
\]
and thus $\delta_2 A \in \cC^{\gamma}_3$.
A weak $\gamma$-sewing map then defines an area process in the sense of \autoref{def:area} by
\begin{equation}\label{eq:area_from_sewing}
\A_{s,t}(X,Y) =  (\Lambda \delta_2 A)_{s,t}
\;.
\end{equation}
Consider now $Z=(X,Y)$ which is an axis path on $[u,v]\subset[0,1]$, i.e. $Z_t = Z_u + (t-u)ae_i$ for all $t \in [u,v]$ for some $i=1,2$, $a \in \R$.
Then $\delta_2 A|_{\triangle^3[u,v]} = 0$.
Therefore, if $\Lambda$ vanishes locally at zero, then $\A_{u,v}(Z) = 0$
and thus $\A$ vanishes on axis paths in the sense of \autoref{def:axis}.
We thus obtain a corollary of \autoref{thm:axis}.

\begin{cor}\label{cor:local_sewing}
Let $\gamma\in (0,1]$.
There is no weak $\gamma$-sewing map which vanishes locally at zero.
In particular, there is no local weak $\gamma$-sewing map $\Lambda$ such that $\Lambda(0)=0$.
\end{cor}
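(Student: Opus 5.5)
We argue by contradiction and reduce to \autoref{thm:axis}. Suppose $\Lambda$ is a weak $\gamma$-sewing map that vanishes locally at zero. Take the path $Z=(X,Y)$ given by \autoref{thm:axis}; we will build an area process on it which vanishes on axis paths, which is impossible.

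\textbf{Regularity.} The discussion after \autoref{def:weak_sewing} needs $(X,Y)\in C^{\gamma/2}\times C^{\gamma/2}$. This holds because $\gamma/2\le 1/2$ and $Z\in C^{1/2}\times C^{1/2}$ on $[0,1]$, so $C^{1/2}\subset C^{\gamma/2}$.

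\textbf{Germ and area.} Define the germ
\[
A_{s,t}=-\tfrac12\bigl(X_sY_{s,t}-Y_sX_{s,t}\bigr).
\]
It is continuous on $\triangle$, so $A\in\cC_2$. Its coboundary is
\[
(\delta_2A)_{s,u,t}=\tfrac12\bigl(X_{s,u}Y_{u,t}-Y_{s,u}X_{u,t}\bigr).
\]
Each factor is bounded by $\|\cdot\|_{\Hol{\gamma/2}}(t-s)^{\gamma/2}$, so $\delta_2A\in\cC_3^\gamma$. Hence $\delta_2 A$ lies in the domain $\delta_2(\cC_2)\cap\cC_3^\gamma$, and we may set $\A_{s,t}(Z)=(\Lambda\delta_2A)_{s,t}$ as in \eqref{eq:area_from_sewing}. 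This $\A(Z)$ lies in $B$ because $\Lambda$ takes values in $B$. Since $\delta_2\Lambda=\mathbbm 1$, we get $\delta_2\A(Z)=\delta_2A$, which is exactly Chen's identity \eqref{eq:chen}. So $\A$ is an area process on $Z$.

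\textbf{Vanishing on axis paths.} Let $[u,v]$ be an interval on which $Z_t=Z_u+(t-u)ae_i$. For $u\le s\le r\le t\le v$, the increments $Z_{s,r}$ and $Z_{r,t}$ are both parallel to $e_i$. One of $X,Y$ is therefore constant on $[u,v]$, so each term of $(\delta_2A)_{s,r,t}$ vanishes. Thus $\delta_2A|_{\triangle^3[u,v]}=0$. Since $\Lambda$ vanishes locally at zero, $\Lambda\delta_2A|_{\triangle[u,v]}=0$, and in particular $\A_{u,v}(Z)=0$. This contradicts \autoref{thm:axis}, proving the first claim.

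\textbf{The ``in particular''.} Let $\Lambda$ be local with $\Lambda(0)=0$. If $\Xi|_{\triangle^3[s,t]}=0=0|_{\triangle^3[s,t]}$, locality gives $\Lambda\Xi|_{\triangle[s,t]}=\Lambda(0)|_{\triangle[s,t]}=0$. Here $0\in\delta_2(\cC_2)\cap\cC_3^\gamma$ trivially. So $\Lambda$ vanishes locally at zero, and the first claim applies.

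No step is a genuine obstacle. The only points needing care are the Hölder exponent bookkeeping ($\gamma/2\le 1/2$) and checking that the germ $A$ is continuous, so that $\delta_2A$ lies in the domain of $\Lambda$.
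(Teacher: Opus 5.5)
Your proposal is correct and follows the paper's own argument: the paper also defines $\A_{s,t}(X,Y)=(\Lambda\delta_2A)_{s,t}$ from the germ $A_{s,t}=-\frac12(X_sY_{s,t}-Y_sX_{s,t})$, observes that $\delta_2A$ vanishes on $\triangle^3[u,v]$ whenever $Z$ is an axis path on $[u,v]$, and concludes from \autoref{thm:axis}. Your explicit checks (the embedding $C^{1/2}\subset C^{\gamma/2}$, continuity of the germ, and the reduction of the ``in particular'' clause using $0=\delta_2 0$ in the domain) fill in details the paper leaves implicit.
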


By similar considerations, a time translation--invariant weak $\gamma$-sewing map would yield a time translation--invariant area process on $C^{\gamma/2}\times C^{\gamma/2}$.
We thus obtain the following corollary of \autoref{thm:main_determ} \ref{pt:time}.

\begin{cor}\label{cor:time_sewing}
Let $\gamma\in (0,1]$.
There is no weak $\gamma$-sewing map which is time translation--invariant.
\end{cor}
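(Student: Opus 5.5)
The plan is to argue by contradiction and reduce to \autoref{thm:main_determ}\ref{pt:time}, exactly as \autoref{cor:local_sewing} was reduced to \autoref{thm:axis}.

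Suppose $\Lambda$ is a time translation--invariant weak $\gamma$-sewing map. First, take the path $Z=(X,Y)$ built in the proof of \autoref{thm:main_determ}\ref{pt:time}. That path lies in $C^{1/2}\times C^{1/2}$. Since $\gamma\leq 1$, hence $\gamma/2\leq 1/2$, it also lies in $C^{\gamma/2}\times C^{\gamma/2}$. Second, form the germ $A_{s,t}=-\frac12(X_sY_{s,t}-Y_sX_{s,t})$. As computed above, $(\delta_2A)_{s,u,t}=\frac12(X_{s,u}Y_{u,t}-Y_{s,u}X_{u,t})$. This belongs to $\delta_2(\cC_2)\cap\cC_3^\gamma$, because each product is bounded by $\|Z\|_{\Hol{\gamma/2}}^2(t-s)^{\gamma}$. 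Third, define $\A_{s,t}(Z)=(\Lambda\delta_2A)_{s,t}$ as in \eqref{eq:area_from_sewing}. Then $\A(Z)\in B$, and the relation $\delta_2\Lambda=\mathbbm 1$ is precisely Chen's identity \eqref{eq:chen}. So $\A$ is an area process on $Z$.

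It remains to check time translation--invariance of $\A$. Suppose $Z_{s+\tau,u+\tau}=Z_{s,u}$ for all $u\in[s,t]$. Then for $(a,b,c)\in\triangle^3[s,t]$ we have $Z_{a+\tau,b+\tau}=Z_{s+\tau,b+\tau}-Z_{s+\tau,a+\tau}=Z_{a,b}$, and likewise for the pair $(b,c)$. Because $\delta_2A$ depends only on increments of $Z$, this gives $(\delta_2A)_{a+\tau,b+\tau,c+\tau}=(\delta_2A)_{a,b,c}$.

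Applying \autoref{def:weak_sewing}\ref{pt:time_translation_def}, we get $\A_{s+\tau,t+\tau}(Z)=\A_{s,t}(Z)$. One small point: the definition there requires $\tau>0$, while $\tau=0$ is trivial. Hence $\A$ is a time translation--invariant area process on $Z$, which contradicts \autoref{thm:main_determ}\ref{pt:time}.

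None of these steps is a real obstacle. The only points needing care are the embedding $C^{1/2}\subset C^{\gamma/2}$ on $[0,1]$ and the observation that the $\cC_3^\gamma$ bound on $\delta_2A$ holds uniformly on all of $\triangle^3$, so that $\Lambda$ is actually defined on it.
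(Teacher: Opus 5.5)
Your proposal is correct and is the paper's own argument. The paper also obtains this corollary by applying $\Lambda$ to $\delta_2A$, with $A_{s,t}=-\frac12(X_sY_{s,t}-Y_sX_{s,t})$ built from the path of \autoref{thm:main_determ}\ref{pt:time}; the resulting area process inherits time translation--invariance because $\delta_2A$ depends only on increments, contradicting that theorem. Your checks of the $\cC_3^\gamma$ bound, the transfer of the increment condition to all of $\triangle^3[s,t]$, and the harmless $\tau=0$ versus $\tau>0$ discrepancy are exactly the details the paper calls ``similar considerations.''
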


\subsection{Local unbounded sewing maps and area processes}

In the next \autoref{thm:algebraic_local_sewing}, we study the effects of enlarging the target space of $\Lambda$ from the space $B$ of bounded functions to all of \(\R^\triangle\)
as well as enlarging the domain of $\Lambda$.
With target space $\R^\triangle$, we show that there do exist strongly local (in particular local and time translation--invariant) sewing maps on the domain \(\delta_2\cC_2\), see \autoref{thm:algebraic_local_sewing}\ref{pt:cont_exist}.
Moreover, somewhat surprisingly,
on the larger domain \(\delta_2 \R^{\triangle}\), there also exist local sewing maps but \emph{not} time translation--invariant ones (\autoref{thm:algebraic_local_sewing}\ref{pt:discont_exist}-\ref{pt:discont_nonexist}).
We obtain in \autoref{cor:algebraic_area} the existence of (strongly) local area processes taking values in $\R^{\triangle}$ and with domain $C[0,1]\times C[0,1]$ or $\R^{[0,1]}\times \R^{[0,1]}$. These results are non-constructive and make use of the axiom of choice.

Denote
\[
\cC_2[s,t] = C(\triangle[s,t])\;,
\quad
\cC_3[s,t] = C(\triangle^3[s,t])\;,
\]
where we recall the notation from \eqref{eq:triangle_def} and \eqref{eq:triangle_3_def}.
Define $\delta_2\colon \R^{\triangle[s,t]} \to \R^{\triangle^3[s,t]}$ by \eqref{eq:delta_def} and denote
\[
E[s,t] = \delta_2\cC_2[s,t]  \subset \cC_3[s,t]
\;,\qquad
F[s,t] = \delta_2 \R^{\triangle[s,t]} \subset \R^{\triangle^3[s,t]}
\;.
\]

\begin{thm}
\label{thm:algebraic_local_sewing}
\begin{enumerate*}[label=(\roman*)]
\item \label{pt:cont_exist} There exists a linear map \(\Lambda \colon E[0,1] \to \R^{\triangle}\)
such that $\delta_2\Lambda=\mathbbm 1$ and $\Lambda$ is strongly local (cf.\ \autoref{rem:timeloc}) in the sense that, for all \(\tau\geq 0\) and $(s,t) \in \triangle$ such that \(t+\tau\leq 1\) and \(\Xi,\Xi'\in E [0,1] \) such that $\Xi_{a,b,c} = \Xi'_{a+\tau,b+\tau,c+\tau}$ for all $(a,b,c)\in\triangle^3[s,t]$,
one has
\(
(\Lambda\Xi)_{s,t}=(\Lambda\Xi')_{s+\tau,t+\tau}
\).
\medskip
\\

\item \label{pt:discont_exist} There exists a linear map \(\Lambda \colon F[0,1] \to \R^{\triangle}
\)
such that $\delta_2\Lambda =\mathbbm 1$ 
and which is local in the sense that, for all $(s,t) \in \triangle$ and \(\Xi,\Xi'\in F [0,1] \) such that $\Xi|_{\triangle^3[s,t]}= \Xi'|_{\triangle^3[s,t]}$,
one has
\(
(\Lambda\Xi)_{s,t}=(\Lambda\Xi')_{s,t}
\).
\medskip
\\

\item \label{pt:discont_nonexist} There does not exist a map
\(
\Lambda \colon F[0,1] \to \R^{\triangle}
\)
such that 
\(
\delta_2\Lambda=\mathbbm 1
\)
and which is time translation--invariant in the sense of \autoref{def:weak_sewing}\ref{pt:time_translation_def}.

In fact, there exists \(\Xi\in F[0,1]\) which is translation-invariant in the sense that $\Xi_{s,u,t} = \Xi_{s+\tau,u+\tau,t+\tau}$ for all $(s,u,t),(s+\tau,u+\tau,t+\tau)\in\triangle^3$
and for which there exists no \(A\in \R^{\triangle}\) with $\delta_2A=\Xi$ and which is translation-invariant in the sense that \(A_{s,t} = A_{s+\tau,t+\tau}\) for all $(s,t),(s+\tau,t+\tau)\in\triangle$.
\end{enumerate*}
\end{thm}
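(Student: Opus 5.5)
For \ref{pt:discont_nonexist} I would write down an explicit counterexample; for \ref{pt:cont_exist} and \ref{pt:discont_exist} I would use a Zorn's lemma argument. All three parts rest on one preliminary fact: the complex of arbitrary functions is exact at $\R^{\triangle^3}$. Concretely, I would first check that $\Xi\in F[0,1]$ if and only if the cocycle identity
\[
\Xi_{a,b,c}+\Xi_{a,c,d}=\Xi_{a,b,d}+\Xi_{b,c,d}
\]
holds for all $a\le b\le c\le d$. If it holds, then $A_{s,t}\coloneqq -\Xi_{0,s,t}$ satisfies $\delta_2A=\Xi$, which one sees by applying the identity to the quadruple $(0,s,u,t)$. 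Two primitives of $\Xi$ differ by some $\delta_1 f$ with $f\in\R^{[0,1]}$, and the same statements hold on every $[s,t]$.

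\emph{Part \ref{pt:discont_nonexist}.} I would use the additive structure of $\R$. Write a translation-invariant $\Xi$ as $\Xi_{s,u,t}=\phi(u-s,t-u)$ and a translation-invariant $A$ as $A_{s,t}=g(t-s)$. Then $\delta_2 A=\Xi$ reads
\[
\phi(x,y)=g(x+y)-g(x)-g(y),
\]
so any translation-invariant coboundary must have $\phi$ symmetric. I would therefore take $\phi(x,y)=h(x)k(y)$, where $h,k\colon\R\to\R$ are $\Q$-linear maps built from a Hamel basis (this uses the axiom of choice). Fix two basis elements $e,e'\in(0,1/2)$ and set $h(e)=1$, $h(e')=0$, $k(e)=0$, $k(e')=1$, with $h$ and $k$ vanishing on the rest of the basis. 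Bilinearity of $\phi$ gives the cocycle identity, so $\Xi\in F[0,1]$ by exactness. On the other hand $\phi(e,e')=1\neq 0=\phi(e',e)$, so no translation-invariant primitive exists. Finally, a time translation--invariant $\Lambda$ would turn $\Lambda\Xi$ into such a primitive, since $\Xi$ agrees with its own translates on every $\triangle^3[s,t]$. This proves the first claim of \ref{pt:discont_nonexist}.

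\emph{Part \ref{pt:discont_exist}.} I would construct the integration map $\cI$ of \autoref{rem:integration} instead of $\Lambda$ directly. I consider pairs $(D,\Lambda_D)$ with the following properties:
\begin{itemize}
\item $D\subset\triangle$ is a set of intervals closed under the Chen splittings, meaning that if $[s,t]\in D$ and $u\in[s,t]$ then $[s,u],[u,t]\in D$;
\item $\Lambda_D$ assigns to each $[s,t]\in D$ a linear functional on $F[s,t]$;
\item Chen's relation $\Lambda_{s,t}=\Lambda_{s,u}+\Lambda_{u,t}+\Xi_{s,u,t}$ holds whenever all three intervals lie in $D$.
\end{itemize}
Zorn's lemma gives a maximal such pair. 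To show that a maximal pair is total, I would add one interval $[s,t]$ at a time. Its value is forced through any $u$ for which $[s,u],[u,t]$ are already defined, and I would use the cocycle identity to show that these forced values agree for different $u$. If no such $u$ exists, I would choose a linear extension, which is possible because functionals on a subspace of $F[s,t]$ extend (Hamel basis again). Locality is built in, because $\Lambda_{s,t}$ only ever sees $\Xi|_{\triangle^3[s,t]}$.

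\emph{Part \ref{pt:cont_exist}.} I would run the same Zorn argument, but now index by the \emph{translation class} of an interval rather than by the interval itself. The extra input is continuity of the germs. Because $\Xi\in E$ comes from a continuous $A$, the only possible obstruction to a translation-invariant primitive is the asymmetry seen in \ref{pt:discont_nonexist}, which arose from non-measurable additive functions. A continuous $\phi$ satisfying the cocycle identity should be a continuous coboundary, since continuous solutions of the additive functional equation are linear. I expect the main obstacle to be the extension step of these Zorn arguments. There one must show that the values forced through different splitting points $u$ are consistent, and in \ref{pt:cont_exist} also consistent across translates. My first attempt would be to reduce this consistency, via the cocycle identity, to the one-variable equation $\phi(x,y)=g(x+y)-g(x)-g(y)$, and to solve that equation continuously on $E$.
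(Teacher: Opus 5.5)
Your exactness criterion for $F$ is correct. Your part (iii) is also correct and matches the paper: the paper uses an alternating $\Q$-bilinear $\omega$ with $\omega(a,b)=1$ where you use $h(x)k(y)$, and in both cases a translation-invariant primitive would force $\phi$ to be symmetric.

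\textbf{The gap in (ii).} Zorn's lemma only moves the difficulty into the claim that a maximal pair is total, and adding one interval at a time does not prove that claim.

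\begin{itemize}
\item Your consistency check is valid when every proper subinterval of $[s,t]$ already lies in $D$: then the value is forced, and the cocycle identity makes it independent of $u$.
\item But inclusion of intervals is not well-founded, so a non-total down-closed $D$ need not have such a minimal missing interval. For example, $D=\{[u,u]:u\in[0,1]\}$ with the forced values $\Lambda_{u,u}\xi=-\xi_{u,u,u}$ is an admissible pair, and no nondegenerate interval can be added to it on its own. Enlarging it means defining $\Lambda$ on all subintervals of some $[s,t]$ at once, which is the original problem.
\item Your fallback of choosing ``a linear extension'' when no splitting point is available contradicts closure under splittings.
\item If you drop that closure, maximal pairs need not be total. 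Put $\Lambda\equiv0$ on $[0,1]$ and $[0,\frac12]$ only. Then $[\frac12,1]$ can never be added, because $-\xi_{0,\frac12,1}$ does not depend on $\xi^{\frac12,1}$ alone (take $\xi=\delta_2\mathbbm 1_{\{(0,1)\}}$).
\end{itemize}

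What is missing is a global consistency argument. The paper supplies it as follows: it quotients the free space on $\be$ and the symbols $[[s,t],\xi]$ by the Chen relations $R$. It shows $\be\notin R$ by testing any finite relation against the partition functional $S$. It then extends $\ell(q(\be))=1$ to all of the quotient.

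\textbf{The gap in (i) is more serious.} Index the Zorn argument by lengths. Your splitting-consistency argument never uses continuity, so it runs verbatim on $F$. Yet your own construction in (iii), with $e+e'\le\ell$, shows that on $F$ no admissible family can contain any length $\ell>0$. Such a family would give $g$ on $[0,\ell]$ with $g(e+e')-g(e)-g(e')$ equal to both $\phi(e,e')=1$ and $\phi(e',e)=0$. So on $F$ the maximal pair is $D=\{0\}$, which is not total.

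Hence the whole content of (i) lies in getting started at arbitrarily short lengths, and your sketch leaves exactly this open. The proposed reduction to $\phi(x,y)=g(x+y)-g(x)-g(y)$ does not address it: a general $\Xi\in E$ is not translation-invariant, and consistency across splitting points was never the obstruction.

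The missing ingredient is the paper's averaged Riemann sum $J^\eps_L(A)=\eps^{-1}\int_0^{L-\eps}(A_{r,r+\eps}-A_{r,r})\dif r$. It depends only on the shifted germ, and it is combined with a Hamel-extended limit $\Lim_{\eps\downarrow0}$. Continuity enters in two places:
\begin{itemize}
\item continuity of $A$ at $(l,l)$ makes the boundary term $R_\eps$ vanish, which gives additivity of $I_L=\Lim_{\eps\downarrow0}J^\eps_L$;
\item continuity of $g_t=K_{0,t}$ gives $I_L(K)=K_{0,L}$ whenever $\delta_2K=0$.
\end{itemize}
Consequently $\lambda_L(\delta_2A)=A_{0,L}-I_L(A)$ is well defined and satisfies the sewing identity.
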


\begin{rem}
The assumption that the domain of $\Lambda$ is $\delta_2\cC_2$ in \ref{pt:cont_exist} is stronger than necessary.
The proof below reveals that $\cC_2$ can be replaced by bounded measurable functions that are suitably \say{regulated} on the diagonal.
\end{rem}

\begin{proof}
\ref{pt:cont_exist} We follow the idea of \autoref{rem:integration}
and construct first a \say{strongly local integration map}.
Let \(\mathcal V_{c}\subset\R^{(0,1]}\) be the subspace of all functions
\(f\colon(0,1]\to\R\) for which
\(
\lim_{\eps\downarrow0}f(\eps)
\)
exists.
This limit defines a linear functional on
\(\mathcal V_{c}\).
Extending a basis of
\(\mathcal V_{c}\) to a basis of \(\R^{(0,1]}\), which is possible by the axiom of choice, we obtain
a linear functional
\[
\Lim_{\eps\downarrow0}\colon
\R^{(0,1]}\to\R
\]
such that
\[
\Lim_{\eps\downarrow0} f(\eps)
=
\lim_{\eps\downarrow0} f(\eps)
\]
whenever the ordinary limit on the right-hand side exists.

For \(0<L\leq1\) and \(A\in \cC_2[0,L]\), define for
\(0<\eps<L\) the \say{approximate integral}
\begin{equation}\label{eq:J_def}
J_L^\eps(A)
=
\frac1\eps
\int_0^{L-\eps}
(A_{r,r+\eps}-A_{r,r}) \dif r
\;.
\end{equation}
For \(\eps\geq L\), define \(J_L^\eps(A)=0\). Now define the \say{integration map} \(I_L\colon \cC_2[0,L] \to\R\) by
\[
I_L(A)
=
\operatorname{Lim}_{\eps\downarrow0}
J_L^\eps(A)\;.
\]
For $L=0$, we also define $I_0\colon \cC_2[0,0] \to \R$ by $I_0 = 0$.
Since \(J_L^\eps\colon \cC_2[0,L] \to\R\) is linear, \(I_L\colon \cC_2[0,L] \to\R\) is also linear.

We now prove two properties of \(I_L\).
First, for \(A\in \cC_2[0,L]\) and \(0\leq l\leq L\), define \(A^{0,l} \in \cC_2[0,l]\) and \(A^{l,L}\in \cC_2[0,L-l]\) by
\begin{align*}
A^{0,l}_{s,t}&=A_{s,t},
\qquad
0\leq s\leq t\leq l\;,
\\
A^{l,L}_{s,t}&=A_{l+s,l+t},
\qquad
0\leq s\leq t\leq L-l\;.
\end{align*}
We claim that \(I_L\) is additive in the sense that
\begin{equation}\label{eq:additivity}
I_L(A)
=
I_l(A^{0,l})
+
I_{L-l}(A^{l,L})\;.
\end{equation}
Indeed, for $l=0$ or $l=L$, the claim follows from $I_0 = 0$, so assume that \(0<l<L\).
Taking \(0<\varepsilon<\min(l,L-l)\), splitting the integral in \eqref{eq:J_def} into the intervals \([0,l-\eps]\), \([l-\eps,l]\) and \([l,L-\eps]\),
and remarking that the final integral is precisely \(J_{L-l}^\varepsilon(A^{l,L})\),
we obtain
\[
J_L^\varepsilon(A)
=
J_l^\varepsilon(A^{0,l})
+
J_{L-l}^\varepsilon(A^{l,L})
+
R_\varepsilon\;,
\]
where
\[
R_\varepsilon
=
\frac1\varepsilon
\int_{l-\varepsilon}^{l}
(A_{r,r+\varepsilon}-A_{r,r}) \dif r.
\]
Note that \(|R_\varepsilon|
\leq
\sup_{r\in[l-\varepsilon,l]}|A_{r,r+\varepsilon}-A_{r,r}|\) and that the right-hand side converges to zero as \(\varepsilon\downarrow0\)
because \(A\) is continuous at \((l,l)\).
Applying \(\operatorname{Lim}_{\varepsilon\downarrow0}\) proves the claim \eqref{eq:additivity}.

Second, for \(L \in [0,1]\), we claim that if \(K\in \cC_2[0,L]\) satisfies
\(
\delta_2K=0
\),
then
\begin{equation}\label{eq:integral}
I_L(K)=K_{0,L}\;.
\end{equation}
To see this, define \(g_t=K_{0,t}\) which by \(\delta_2K=0\) satisfies
\(
K_{s,t}=g_t-g_s
\)
for \(0\leq s\leq t\leq L\).
Note that, since we are working on the closed simplex, \(\delta_2 K_{r,r,r} = -K_{r,r} = 0\)
for every \(r\in[0,L]\), and in particular \(g_0=K_{0,0}=0\).
This also shows \eqref{eq:integral} for $L=0$ since $I_0(K)=0=K_{0,0}$.
On the other hand, if $L>0$, then
\begin{equation}\label{eq:J_L_K}
J_L^\eps(K)
=
\frac1\eps
\int_0^{L-\eps}
K_{r,r+\eps} \dif r
=
\frac1\eps
\int_0^{L-\eps}
(g_{r+\eps} - g_r)\dif r
=
\frac1\eps
\Big(
\int_{L-\eps}^{L}g_r \dif r
-
\int_0^\eps g_r \dif r
\Big)\;.
\end{equation}
Since \(K\) is continuous, so is \(g\) and thus
\[
\lim_{\eps\downarrow0} J_L^\eps(K) = g_L-g_0=K_{0,L}\;.
\]
Since the limit as \(\eps\downarrow0\) of the left-hand side of \eqref{eq:J_L_K} is also equal to \(I_L(K)\), we obtain \eqref{eq:integral}.

We now proceed to define a sewing map on \([0,L]\) for \(L \in [0,1]\).
For \(\xi\in E[0,L]\),
choose any \(A\in \cC_2[0,L]\) such that
\(
\delta_2A=\xi
\)
and define
\[
\lambda_L(\xi)
=
A_{0,L}-I_L(A)\;.
\]
We first claim that \(\lambda_L(\xi)\) is independent of the choice of \(A\). Indeed, if \(\delta_2A'=\xi\), then \(K\coloneqq A-A'\) satisfies \(\delta_2K=0\), hence, by \eqref{eq:integral},
\[
I_L(K)=K_{0,L}\;.
\]
Therefore, by linearity of \(I_L\) and writing \(A=A'+K\),
\[
A_{0,L}-I_L(A)
=
A'_{0,L}+K_{0,L}-I_L(A') - I_L(K)
=
A'_{0,L}-I_L(A').
\]
Thus
\(
\lambda_L\colon E[0,L]\to\R
\)
is a well-defined linear map.

Next, we claim that \(\lambda_L\) satisfies the sewing identity
\begin{equation}\label{eq:sewing_lambda}
\lambda_L(\xi)
=
\lambda_l(\xi^{0,l})
+
\lambda_{L-l}(\xi^{l,L})
+
\xi_{0,l,L}
\end{equation}
for all \(0\leq l\leq L\). Indeed, if \(\xi=\delta_2A\), then \(\xi^{0,l}=\delta_2A^{0,l}\) and
\(\xi^{l,L}=\delta_2A^{l,L}
\), and thus
\begin{align*}
\lambda_L(\xi)
&=
A_{0,L}-I_L(A)\\
&=
A_{0,L}
-
I_l(A^{0,l})
-
I_{L-l}(A^{l,L}) \\
&=
\big(A_{0,l}-I_l(A^{0,l})\big)
+
\big(A_{l,L}-I_{L-l}(A^{l,L})\big)
+
A_{0,L}-A_{0,l}-A_{l,L} \\
&=
\lambda_l(\xi^{0,l})
+
\lambda_{L-l}(\xi^{l,L})
+
\xi_{0,l,L}\;,
\end{align*}
where we used the additivity property \eqref{eq:additivity} of \(I_L\) in the second equality.

We now finally define \(\Lambda\). Let \(\Xi\in E[0,1]\) and \(0\leq s\leq t\leq1\).
Set \(L=t-s\) and define the shift and restriction $\theta_{s,t}\Xi\in E[0,t-s]$ by
\[
(\theta_{s,t}\Xi)_{a,b,c}
=
\Xi_{s+a,s+b,s+c}\;,
\qquad
0\leq a\leq b\leq c\leq t-s\;.
\]
We then define
\[
(\Lambda\Xi)_{s,t}
=
\lambda_{t-s}(\theta_{s,t}\Xi)\;.
\]

The map \(\Xi\mapsto (\Lambda\Xi)_{s,t}\) is linear for every $s,t$ because \(\lambda_L\) is linear.
We now verify that \(\Lambda\) is a right-inverse of \(\delta_2\).
Consider \(0\leq s\leq u\leq t\leq1\) and set
\[
L=t-s\;,
\qquad
l=u-s\;,
\qquad
\xi=\theta_{s,t}\Xi\;.
\]
Then
\[
\xi_{0,l,L}
=
\Xi_{s,u,t}\;,
\qquad
\xi^{0,l} = \theta_{s,u}\Xi\;,
\qquad
\xi^{l,L} = \theta_{u,t}\Xi\;.
\]
By the sewing identity \eqref{eq:sewing_lambda} for \(\lambda_L\), one has \(
\lambda_L(\xi)
=
\lambda_l(\xi^{0,l})
+
\lambda_{L-l}(\xi^{l,L})
+
\xi_{0,l,L}
\),
which in terms of \(\Lambda\) becomes
\[
(\Lambda\Xi)_{s,t}
=
(\Lambda\Xi)_{s,u}
+
(\Lambda\Xi)_{u,t}
+
\Xi_{s,u,t}\;.
\]
This proves the right-inverse property \(\delta_2\Lambda\Xi=\Xi\).

Furthermore, \(\Lambda\) is strongly local by construction. Indeed, for $\Xi,\Xi'$, $\tau\geq 0$, and $(s,t)\in\triangle$ as in \ref{pt:cont_exist}, one has
\(
\theta_{s,t}\Xi=\theta_{s+\tau,t+\tau}\Xi'
\)
as elements of \(E[0,t-s]\), and therefore
\[
(\Lambda\Xi)_{s,t}
=
\lambda_{t-s}(\theta_{s,t}\Xi)
=
\lambda_{t-s}(\theta_{s+\tau,t+\tau}\Xi')
=
(\Lambda\Xi')_{s+\tau,t+\tau}\;.
\]

\ref{pt:discont_exist}
Define the real vector space
\[
G
=
\R \be
\oplus
\bigoplus_{[s,t]\subset[0,1]} F[s,t]
\;,
\]
where the direct sum is taken over all intervals \([s,t]\subset[0,1]\) with $s\leq t$ and \(\be\) is a formal symbol.
We treat $G$ as the free vector space generated by $\be$ and the formal symbols $[[s,t],
\xi]$ for $[s,t]\subset [0,1]$ and $\xi \in F[s,t]$, subject to the linearity relations $[[s,t],a\xi+b\eta]=a[[s,t],\xi]+b[[s,t],\eta]$.

For \((s,t)\in\triangle\), \(u\in[s,t]\), and \(\xi\in F[s,t]\), denote
\[
\xi^{s,u}=\xi|_{\triangle^3[s,u]}\;,
\qquad
\xi^{u,t}=\xi|_{\triangle^3[u,t]}\;,
\]
and let \(R\subset G\) be the subspace generated by all vectors of the form
\[
C_{[s,t]u,\xi} \coloneqq [[s,t],\xi]
-
[[s,u],\xi^{s,u}]
-
[[u,t],\xi^{u,t}]
-
\xi_{s,u,t}\be
\;.
\]
Denote the corresponding quotient map by
\[
q\colon G \to G/R\;.
\]
We claim first that $\be \notin R$ and thus $
q(\be)\neq 0$.
Suppose, towards a contradiction, that \(\be\in R\). Then
\begin{equation}\label{eq:be_def}
\be = \sum_{[s,t],u,\xi} a_{[s,t],u,\xi} C_{[s,t],u,\xi} \;,
\end{equation}
where the sums are finite and $a_{[s,t],u,\xi} \in \R$.
We define a linear functional \(S\) on the finite-dimensional span of all vectors \([[a,b],\xi]\) and $\be$ appearing in the right-hand side of \eqref{eq:be_def} after expanding the generators $C_{[s,t]u,\xi}$, and we will show that $S$ annihilates the right-hand side of \eqref{eq:be_def} but $S(\be)=1$, which is a contradiction.

Let $\Pi = (0=t_0 < t_1 < \cdots < t_n=1)$ be a finite partition of $[0,1]$ that contains all the time points $s,t,u$ appearing in \eqref{eq:be_def}.
For $s<t$ and $[[s,t],\xi]$ appearing in the sum \eqref{eq:be_def},
let $s=s_0<\cdots < s_\ell=t$ denote the points in $\Pi$ that are contained in $[s,t]$ and define
\[
S([[s,t],\xi])
=
\sum_{i=1}^{\ell-1} \xi_{s_0,s_i,s_{i+1}}\;,
\]
with the convention that $S([[s,t],\xi])=0$
if $s_0=s$ and $s_1=t$.
For $[[s,s],\xi]$ appearing in \eqref{eq:be_def}, define
\begin{equation}\label{eq:S_def}
S([[s,s],\xi])=-\xi_{s,s,s}\;.
\end{equation}
If $u \in [s,t]$ also appears in \eqref{eq:be_def}, then $u=s_j$ for some $j=0,\ldots,\ell$,
and it readily follows from $\xi \in \delta_2\R^{\triangle[s,t]}$ that
\[
S([[s,t],\xi])
=
S([[s,u],\xi^{s,u}])
+
S([[u,t],\xi^{u,t}])
+
\xi_{s,u,t}
\;.
\]
(For $u\in\{s,t\}$, this follows from \eqref{eq:S_def},
and for $u\in (s,t)$, this follows from writing $\xi_{a,b,c} = A_{a,c} - A_{a,b}-A_{b,c}$, from which we obtain
$S([[s,t],\xi]) = A_{s,t} - \sum_{i=0}^{\ell-1}A_{s_i,s_{i+1}}$.)
Extending $S$ to $\be$ by $S(\be)=1$ and extending to spans of vectors by linearity, we obtain
\(
S(C_{[s,t],u,\xi}) = 0
\).
Hence $S$ annihilates the right-hand side of \eqref{eq:be_def}, but $S(\be)=1$, which is a contradiction.
Therefore $\be\notin R$ as claimed.

Now define a linear functional on the one-dimensional subspace
$
\R q(\be)\subset G/R
$
by
\(
\ell(q(\be)) = 1
\).
Using the axiom of choice, we extend $\ell$ to a linear functional
\(
\ell\colon G/R\to\R\;
\)
that extends $\ell \colon \R q(\be) \to \R$.
For \(\Xi\in F[0,1]\) and $0\leq s \leq t \leq 1$, we then define
\[
(\Lambda\Xi)_{s,t}
=
\ell q([[s,t],\Xi|_{\triangle^3[s,t]}])\;.
\]
Then $\Lambda$ is linear in $\Xi$ and local by construction.

We now check that \(\Lambda\) is a right inverse to \(\delta_2\). Let
\(s\leq u\leq t\). Then since $C_{[s,t],u,\Xi|_{\triangle^3[s,t]}} \in R$, we have
\[
q([[s,t],\Xi|_{\triangle^3[s,t]}])
=
q([[s,u],\Xi|_{\triangle^3[s,u]}])
+
q([[u,t],\Xi|_{\triangle^3[u,t]}])
+
\Xi_{s,u,t}q(\be)
\;.
\]
Applying \(\ell\), and using \(\ell(q(\be))=1\), gives
\(
(\Lambda\Xi)_{s,t}
=
(\Lambda\Xi)_{s,u}
+
(\Lambda\Xi)_{u,t}
+
\Xi_{s,u,t}
\),
which is equivalent to $\delta_2 \Lambda \Xi = \Xi$.

\ref{pt:discont_nonexist}
Choose \(a,b>0\) such that
\begin{equation}\label{eq:L_def}
L \coloneqq a+b\leq 1
\end{equation}
and such that \(a\) and \(b\) are linearly independent over \(\Q\) (e.g.\ one is rational, the other irrational).
Using the axiom of choice, extend \(\{a,b\}\) to a Hamel basis \(\cB\) of \(\R\) over
\(\Q\).
Define an alternating \(\Q\)-bilinear map
\(
\omega\colon \R\times \R\to \R
\)
as follows.
Set
\begin{equation}\label{eq:omega_def}
\omega(a,b)=1\;,
\qquad
\omega(b,a)=-1\;,
\end{equation}
and, for every other ordered pair \((e,e')\in \cB\times\cB\), set \(\omega(e,e')=0\).
We then extend $\omega$ \(\Q\)-bilinearly to
\(\R\times\R\). Note that, since \(\omega\) is alternating, we have
\(\omega(u,u)=0\).
Define \(D\in\R^{\triangle}\) by 
\(D_{s,t}=-\omega(s,t)\)
and set
\(\Xi=\delta_2 D \in F[0,1]\).
For \(0\leq s\leq u\leq t\leq1\), by \(\Q\)-bilinearity of $\omega$, 
\begin{equation}\label{eq:Xi_formula}
\Xi_{s,u,t}
=
D_{s,t}-D_{s,u}-D_{u,t}
=
-\omega(s,t)+\omega(s,u)+\omega(u,t)
=
\omega(u-s,t-u)\;.
\end{equation}
It follows immediately that \(\Xi\) is translation-invariant in the sense of the proposition statement.

Suppose next for a contradiction that there exists \(A\in \R^{\triangle}\) such that \(\delta_2A=\Xi\) and such that \(A\) is translation-invariant.
Then the function
\(f\colon[0,1]\to\R\) defined by \(f(h) = A_{0,h}\)
satisfies
\(A_{s,t}=f(t-s)\).
Recalling \(a+b=L\) from \eqref{eq:L_def}, we obtain
\begin{equation}\label{eq:aL}
f(L)-f(a)-f(b)
=
A_{0,L}-A_{0,a}-A_{a,L}
=
\Xi_{0,a,L}
=
\omega(a,L-a)
=
\omega(a,b)=1
\;,
\end{equation}
where we used \(\delta_2 A=\Xi\) in the second equality,
\eqref{eq:Xi_formula} in the third equality,
and \eqref{eq:omega_def} in the final equality.
On the other hand, by the same reasoning,
\[
f(L)-f(b)-f(a)
=
A_{0,L}-A_{0,b}-A_{b,L}
=
\Xi_{0,b,L}
=
\omega(b,L-b)
=
\omega(b,a)
=
-1\;,
\]
which is a contradiction to \eqref{eq:aL}.
\end{proof}

\begin{cor}\label{cor:algebraic_area}
There exists a bilinear map
\[
\A \colon C([0,1])\times C([0,1]) \to \R^{\triangle}
\]
which satisfies Chen's identity \eqref{eq:chen} and is strongly local in the sense of \autoref{rem:timeloc},
i.e. if
$Z'_{s + \tau,u + \tau} = Z_{s,u}$ for all $u \in [s,t]$ then
$
\A_{s + \tau, t + \tau}(Z') = \A_{s, t}(Z)
$.
The same statement holds with the domain of $\A$ replaced by $\R^{[0,1]}\times \R^{[0,1]}$ and with \say{strongly local} replaced by \say{local} in the sense of \autoref{def:local}.
\end{cor}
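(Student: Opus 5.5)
The plan is to compose the sewing maps of \autoref{thm:algebraic_local_sewing} with the germ construction that precedes \autoref{cor:local_sewing}, exactly as in \eqref{eq:area_from_sewing}. Given $(X,Y)\in C[0,1]\times C[0,1]$, I will use the germ
\[
A_{s,t}(X,Y) = -\tfrac12\bigl(X_sY_{s,t}-Y_sX_{s,t}\bigr)\;.
\]
This germ is continuous on $\triangle$, so $A\in\cC_2$, and the same direct computation as before gives
\[
\Xi(X,Y)\coloneqq\delta_2A = \tfrac12\bigl(X_{s,u}Y_{u,t}-Y_{s,u}X_{u,t}\bigr)\in E[0,1]\;.
\]
I then set $\A_{s,t}(X,Y)=(\Lambda\,\Xi(X,Y))_{s,t}$, with $\Lambda$ the linear map from part \ref{pt:cont_exist}.

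Three properties need checking.

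\emph{Chen's identity.} This follows from $\delta_2\Lambda=\mathbbm 1$:
\[
\A_{s,t}-\A_{s,u}-\A_{u,t}=\Xi_{s,u,t}\;,
\]
which is exactly \eqref{eq:chen}.

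\emph{Bilinearity.} The map $(X,Y)\mapsto\Xi(X,Y)$ is bilinear, since each term is a product of an increment of $X$ and an increment of $Y$. Composing with the linear map $\Lambda$ therefore gives a bilinear $\A$.

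\emph{Strong locality.} The key point is that $\Xi$ depends only on increments of $Z$ inside $[s,t]$, not on $Z_s$ itself. Suppose $Z'_{s+\tau,u+\tau}=Z_{s,u}$ for all $u\in[s,t]$. For $(a,b,c)\in\triangle^3[s,t]$, the increments $Z'_{a+\tau,b+\tau}$ and $Z_{a,b}$ agree, because both are differences of the matched increments from the base point. Hence $\Xi(Z')_{a+\tau,b+\tau,c+\tau}=\Xi(Z)_{a,b,c}$. The strong locality of $\Lambda$ in \ref{pt:cont_exist} then gives $\A_{s+\tau,t+\tau}(Z')=\A_{s,t}(Z)$.

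For the second statement, take $(X,Y)\in\R^{[0,1]}\times\R^{[0,1]}$. The germ $A$ is then only an element of $\R^\triangle$, so $\Xi=\delta_2A\in F[0,1]$, and I use the local map $\Lambda$ from \ref{pt:discont_exist} instead. Chen's identity and bilinearity follow as before. Locality in the sense of \autoref{def:local} follows because $Z_{s,[s,t]}=Z'_{s,[s,t]}$ implies that all increments of $Z$ and $Z'$ within $[s,t]$ agree. This gives $\Xi(Z)|_{\triangle^3[s,t]}=\Xi(Z')|_{\triangle^3[s,t]}$, and locality of $\Lambda$ finishes the argument.

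There is no real obstacle. The only points needing care are that the germ $A$ itself is not local, since it involves $X_s$, whereas $\Xi$ is, and that continuity of $X$ and $Y$ is what places $A$ in $\cC_2$, so that part \ref{pt:cont_exist} applies.
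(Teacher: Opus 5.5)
Your proposal is correct and is the paper's own proof: the paper defines $\A$ by \eqref{eq:area_from_sewing}, applying the sewing maps of \autoref{thm:algebraic_local_sewing}\ref{pt:cont_exist} and \ref{pt:discont_exist} to $\delta_2 A$. Your checks of Chen's identity, bilinearity and (strong) locality, including the remark that $\Xi$ rather than $A$ is the local object, fill in details the paper leaves implicit.
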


\begin{proof}
This follows from defining $\A$ by \eqref{eq:area_from_sewing} for $\Lambda$ as in \autoref{thm:algebraic_local_sewing}\ref{pt:cont_exist}-\ref{pt:discont_exist}.
\end{proof}

\section{Local square-integrable lifts of fractional Brownian motion} \label{sec:fbm}

Let $X$ be a $d$-dimensional fractional Brownian motion (fBm) with Hurst parameter $H \leq 1/2$, that is the $\mathbb R^d$-valued Gaussian process with i.i.d. components, each with covariance function given by 
\[
R(s,t) \coloneqq \mathbb E X_s^\alpha X_t^\alpha = \frac 12 \big(s^{2H} + t^{2H} - (t-s)^{2H} \big), \qquad 0 \leq s \leq t \leq 1, \quad \alpha = 1,\ldots,d.
\]
We briefly recall the framework of isonormal Gaussian processes \cite{Nua06, NouPec12} applied to the specific case of $d$-dimensional fBm, with a focus on locality. We view $X_t(\omega) = \omega(t)$ as the canonical process defined on the Banach space
\[
\Omega \coloneqq \Omega_{0,1}\;,
\quad
\text{where } \; \Omega_{s,t} \coloneqq C_0([s,t], \mathbb R^d)
\coloneqq \{\omega \in C([s,t], \mathbb R^d) \,:\, \omega_s = 0\}
\]
and we equip $\Omega$ with its Borel sigma-algebra $\mathcal F$ and a Gaussian measure $\P$ under which $X$ is a fBm.
For $\alpha_1,\ldots,\alpha_n \in [d] \coloneqq \{1,\ldots,d\}$ we let $\{\alpha_1, \ldots, \alpha_n\}$ denote the set of distinct indices so that $\mathbb R^{\{\alpha_1,\ldots, \alpha_n\}} \cong \mathbb R^k$ for some $k \leq n$. Let 
\[
\Omega_{s,t}^{\alpha_1 \ldots \alpha_n} = C_0([s,t], \mathbb R^{\{\alpha_1, \ldots, \alpha_n\}}),
\]
endowed with its Borel sigma-algebra $\mathcal F^{\alpha_1,\ldots,\alpha_n}_{s,t}$. We denote, using the notation \eqref{eq:r},
\[
\mathscr r_{s,t} \colon \Omega \twoheadrightarrow \Omega_{s,t}, \quad \omega \mapsto \omega_{s,[s,t]},
\]
and note that this respects indices, i.e.\ induces maps $\Omega^{\alpha_1 \ldots \alpha_n} \twoheadrightarrow \Omega_{s,t}^{\alpha_1 \ldots \alpha_n}$.
We equip each $\Omega_{s,t}$ with the pushforward Gaussian measure $\P \circ\mathscr r_{s,t}^{-1}$.

Since the material in this section will not only establish a non-existence result but also a classification result,
we need a notion of local rough path lift which allows for $d$ components and degrees higher than level $2$---when $H \in (1/4, 1/3]$ we need level $3$---and which requires measurability. In particular, the notion of locality is not only w.r.t.\ time but also spatial indices.
We give the following definition in the context of fractional Brownian motion, but note that it makes sense in greater generality.

We refer to \cite{FV10} for background on rough paths.
All rough paths we consider are weakly geometric. We denote by $\mathcal G^n(\mathbb R^d) \subset T^n(\mathbb R^d) = \bigoplus_{k=0}^n (\R^d)^{\otimes k}$
the step-$n$ free nilpotent Lie group
(i.e. the group of truncated grouplike elements) equipped with a homogeneous metric (e.g.\ the Carnot--Carath\'eodory metric). For an element $A \in T^n(\mathbb R^d)$, we will use brackets in superscripts to denote its projections onto tensor powers of $\mathbb R^d$, i.e.\ $A = \sum_{k = 0}^n A^{(k)}$ with $A^{(k)} \in (\mathbb R^d)^{\otimes k}$.
For a word $\alpha_1 \ldots \alpha_n$ with $\alpha_i \in [d]$, we write $A^{\alpha_1 \ldots \alpha_n} \coloneqq \langle \alpha_1 \ldots \alpha_n, A \rangle$ for the dual pairing with the word.

\begin{defn}[Local rough path lift of fBm]\label{def:rplfbm}
Let $H \in (0, \frac 12]$ and $0 < \eta <H$ with $1/\eta < \lfloor 1/H \rfloor + 1$.
A \emph{local $\eta$-H\"older rough path lift} (\emph{local $\eta$-RPL} for short) of $X$ is a random variable $\bX \colon \Omega \to \CHol{\eta}([0,1], \mathcal G^{\lfloor 1/H \rfloor}(\mathbb R^d))$ such that $\bX^{(1)} = X$ and
for any $(s,t) \in \triangle$ and $\alpha_1,\ldots,\alpha_n \in [d]$ with $n \leq \lfloor 1/H \rfloor$, denoting $\bX_{s,t} \coloneqq \bX^{-1}_s\bX_t$, it holds that $\bX_{s,t}^{\alpha_1 \ldots \alpha_n}$ is $\mathscr r_{s,t}^{-1}(\mathcal F^{\alpha_1\ldots \alpha_n}_{s,t})$-measurable. 
We furthermore say that $\bX$ is square-integrable if $\bX_{s,t}^{\alpha_1 \ldots \alpha_n} \in L^2\Omega$ for all $(s,t) \in \triangle$ and $\alpha_1,\ldots,\alpha_n \in [d]$.
\end{defn}

A local $\eta$-RPL of $X$ can be regarded as a random variable defined on the probability space $\Omega_{s,t}$, endowed with the pushforward measure through $\mathscr r_{s,t}$, which respects indexing. Square-integrability of $\bX$ enables the use of Wiener chaos decompositions, which we proceed to introduce. For $(s,t) \in \triangle$ let $\cH^{(d)}_{s,t}$ be the Hilbert space
\begin{align*}
\cH^{(d)}_{s,t} &\coloneqq \cH^1_{s,t} \oplus \ldots \oplus \cH^d_{s,t},\\
\cH^\alpha_{s,t} &= \overline{\cE}{}^\alpha_{s,t}, \qquad \cE_{s,t}^\alpha = \mathrm{span}\{\mathbbm 1^\alpha_{[s,u]} \mid u \in [s,t] \},\\ \langle \mathbbm 1^\alpha_{[s,u]}, \mathbbm 1^\alpha_{[s,v]} \rangle &\coloneqq \mathbb E X^\alpha_{s,u}X^\alpha_{s,v} = R(s,s) + R(u,v) - R(s,v) - R(u,s)\;.
\end{align*}
where the closure is taken w.r.t.\ the inner product and the direct sum is orthogonal (equivalent to the components of the Gaussian process being independent). Since the components are all identically distributed, $\cH^1_{s,t} \cong \ldots \cong \cH^d_{s,t}$ canonically; when we wish to consider a single direct summand we will simply refer to $\cH_{s,t}$. As usual, we denote $\cH \coloneqq \cH_{0,1}$ (and similar).

Recall that multiple Wiener integration \cite[\S 2.7]{NouPec12} yields isometries
\begin{equation}\label{eq:isometry}
\sko^\bullet_{s,t} \colon \bigoplus_{m = 0}^\infty (\cH^{(d)}_{s,t})^{\odot m} \xrightarrow{\cong} L^2\Omega_{s,t}, \qquad \mathscr w^m_{s,t} \colon L^2\Omega_{s,t} \twoheadrightarrow \mathscr W^m_{s,t} \coloneqq  \sko^m_{s,t}((\cH^{(d)}_{s,t})^{\odot m}).
\end{equation}
We have used $\odot$ to denote the symmetric tensor product, i.e.\ for a Hilbert space $V$, $V^{\odot n} \subset V^{\otimes n}$ is the closure of the linear span of the symmetric elementary tensors 
\[
v_1 \odot \cdots \odot v_n \coloneqq \frac{1}{n!} \sum_{\sigma \in \mathfrak S_n} v_1 \otimes \cdots \otimes v_n.
\]
The subspace $\mathscr W^m_{s,t}\subset L^2\Omega_{s,t}$ is called the $m^\text{th}$ Wiener chaos over $[s,t]$, and $\mathscr w^m_{s,t}$ is the orthogonal projection onto it. Since all these operations commute with the natural inclusions, i.e.
\[
\begin{tikzcd}
(\cH^{(d)}_{s,t})^{\odot m} \arrow[d,hookrightarrow] \arrow[r,"\sko_{s,t}^m"] &\mathscr W^m_{s,t} \arrow[d,hookrightarrow] \\
(\cH^{(d)})^{\odot m} \arrow[r,"\sko^m"] &\mathscr W^m
\end{tikzcd}
\]
(as can be observed on the elementary tensors $\mathbbm 1^{\alpha_1}_{[s,u]} \odot \cdots \odot \mathbbm 1^{\alpha_n}_{[s,u]}$, which map to Hermite polynomials and have dense span in $(\cH^{(d)}_{s,t})^{\odot n}$), we will frequently omit the subscripts $s,t$. We also combine this with the notation for subsets of coordinates and the above considerations naturally restrict, i.e.\ 
\[
\cH^{\alpha_1\ldots\alpha_n}_{s,t} \coloneqq \bigoplus_{\beta \in \{\alpha_1, \ldots, \alpha_n\}} \cH^\beta_{s,t}, \qquad L^2\Omega^{\alpha_1\ldots\alpha_n}_{s,t} \cong \bigoplus_{m = 0}^\infty (\cH^{\alpha_1\ldots\alpha_n}_{s,t})^{\odot m}.
\]
It was shown in \cite{Jol07} that for $H \leq 1/2$, the Hilbert space $\cH_{s,t}$ can be identified with the space of functions (modulo a.e.\ identity),
\begin{equation}\label{eq:fracSobolev}
\begin{split}
	\cH_{s,t} &= \{ f \colon [s,t] \to \mathbb R \mid f^* \in H^{1/2 - H}(\mathbb R) \} , \qquad \text{with inner product} \\
	\langle f, g \rangle &= \frac{\Gamma(2H + 1)\sin(H\pi)}{2\pi} \int_{\mathbb R} \widehat{f^*}(x)\overline{\widehat{g^*}(x)} |x|^{1-2H} \dif x
\end{split}
\end{equation}
where $f^* \colon \mathbb R \to \mathbb R$ is the zero-extension of a function $f \colon [s,t] \to \mathbb R$, $\widehat{f^*} = \int_\mathbb{R} e^{itx} f^*(x) \dif t$ denotes Fourier transform, and $H^{1/2 - H}(\mathbb R)$ denotes the fractional Sobolev space.
To derive a time-domain representation of the norm, we introduce the Gagliardo seminorm for $I\subset\R$ and $\alpha \in (0,1)$
\[
[f]_{H^{\alpha}(I)}^2 \coloneqq \int_{I^2} \frac{|f(v) - f(u)|^2}{|v-u|^{1 + 2\alpha}} \dif u \dif v
\;.
\]
Then one has
\begin{equation}\label{eq:plancherel}
\begin{split}
\| f \|^2_{\cH_{s,t}} &\asymp_H [f^*]_{H^{1/2-H}(\R)}^2
= [f]_{H^{1/2-H}([s,t])}^2 + 2 \int_s^t |f(v)|^2 \int_{\mathbb R \setminus [s,t]} \frac{\dif u}{|v - u|^{2 - 2H}} \dif v \\
&\asymp_H [f]_{H^{1/2-H}([s,t])}^2 +  \int_s^t |f(v)|^2 ((v-s)^{2H-1} + (t-v)^{2H-1}) \dif v\;,
\end{split}
\end{equation}
where in the first bound we used \cite[Proposition 3.4]{hitchhiker}.

\begin{lem}\label{lem:characterization}
\(\| f \|^2_{\cH_{s,t}} \asymp_H [f]^2_{H^{1/2-H}([s,t])} + |t-s|^{2H-1}\|f\|_{L^2([s,t])}^2 \).
\end{lem}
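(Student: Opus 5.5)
Starting from \eqref{eq:plancherel}, it suffices to prove
\[
\int_s^t |f(v)|^2 \big((v-s)^{2H-1} + (t-v)^{2H-1}\big)\dif v \asymp_H [f]^2_{H^{1/2-H}([s,t])} + |t-s|^{2H-1}\|f\|^2_{L^2([s,t])}
\]
up to adding $[f]^2$ on the left. The lower bound is immediate: since $2H-1\leq 0$, we have $(v-s)^{2H-1}\geq |t-s|^{2H-1}$ for $v\in[s,t]$. Hence the weighted $L^2$ term dominates $|t-s|^{2H-1}\|f\|^2_{L^2}$, which gives $\|f\|^2_{\cH_{s,t}}\gtrsim$ the right-hand side. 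For $H=1/2$ the weights are constant and the Gagliardo term is replaced by zero (or is trivially comparable), so both sides reduce to $\|f\|_{L^2}^2$. We may therefore assume $H<1/2$ and set $\alpha=1/2-H\in(0,1/2)$.

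The upper bound is a fractional Hardy inequality on an interval. We need
\[
\int_s^t \frac{|f(v)|^2}{(v-s)^{2\alpha}}\dif v \lesssim [f]^2_{H^\alpha([s,t])} + |t-s|^{-2\alpha}\|f\|^2_{L^2([s,t])},
\]
and symmetrically at $t$. By translation and scaling ($v\mapsto s+(t-s)x$), both sides scale identically, so it suffices to work on $[0,1]$. There we use the standard dyadic argument. For $k\geq 0$ set $I_k=[2^{-k-1},2^{-k}]$ and let $m_k$ be the mean of $f$ on $I_k$. Then
\[
\int_{I_k}|f|^2 x^{-2\alpha}\dif x\lesssim 2^{2\alpha k}\Big(\int_{I_k}|f-m_k|^2 + 2^{-k}|m_k|^2\Big).
\]
The oscillation term is controlled by Jensen:
\[
\int_{I_k}|f-m_k|^2\leq 2^{-k}\int_{I_k^2}|f(x)-f(y)|^2\dif x\dif y\,2^{k}\lesssim 2^{-2\alpha k}\int_{I_k^2}\frac{|f(x)-f(y)|^2}{|x-y|^{1+2\alpha}}.
\]
This gives a sum of pieces bounded by $[f]^2_{H^\alpha}$.

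For the mean terms, write $m_k=m_0+\sum_{j<k}(m_{j+1}-m_j)$, where $|m_{j+1}-m_j|^2\lesssim 2^{j(1+2\alpha)}2^{-2j}\int_{I_j\times I_{j+1}}\frac{|f(x)-f(y)|^2}{|x-y|^{1+2\alpha}}$ (again by Jensen, with $|x-y|\lesssim 2^{-j}$). Next we bound $\sum_k 2^{(2\alpha-1)k}|m_k|^2$. Since $2\alpha-1<0$, the weights are geometrically decaying. A discrete Hardy/Cauchy--Schwarz estimate with an intermediate geometric weight $2^{\epsilon(k-j)}$ then gives
\[
\sum_k 2^{(2\alpha-1)k}|m_k|^2\lesssim |m_0|^2+\sum_j 2^{(2\alpha-1)j}|m_{j+1}-m_j|^2,
\]
and each summand in the last sum is $\lesssim\int_{I_j\times I_{j+1}}(\cdots)$. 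Finally $|m_0|^2\lesssim\|f\|^2_{L^2([0,1])}$, which closes the estimate. The endpoint $t$ is handled by reflection. This discrete Hardy summation is the main step to get right; it works precisely because $\alpha<1/2$, which makes $2^{(2\alpha-1)k}$ summable, and it is the reason the lower-order $L^2$ term suffices in place of a boundary condition.

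Combining the Hardy bound with the lower bound and \eqref{eq:plancherel} yields the claimed equivalence, with constants depending only on $H$. Alternatively, one could cite the known fractional Hardy inequality for $\alpha<1/2$ (no vanishing condition is needed on intervals, cf.\ the zero-extension bound in \cite{hitchhiker}), but the dyadic proof above is self-contained.
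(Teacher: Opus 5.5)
Your argument is correct and follows the paper's outline: the lower bound comes from $(v-s)^{2H-1}\geq|t-s|^{2H-1}$, and scaling to $[0,1]$ reduces the upper bound to the fractional Hardy inequality \eqref{eq:hardy}. Where you differ is in how you prove that inequality.

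\textbf{The paper's route.} It uses an absorption trick. For $v\in[0,\delta]$ it compares $f(v)$ with $f(u)$ for $u\in[v/2\delta,v/\delta]$. The $|f(u)|^2$ contribution is bounded by $\delta^{1-2\alpha}\int_0^1|f(u)|^2u^{-2\alpha}\dif u$ and absorbed into the left-hand side for $\delta$ small. The difference term is part of the Gagliardo integral because $|v-u|\asymp_\delta v$ there.

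\textbf{Your route.} You split into dyadic blocks $I_k$ and control the oscillation on each block. You then chain the block means $m_k$ back to $m_0$ using the weighted discrete Hardy inequality $\sum_k 2^{(2\alpha-1)k}|m_k|^2\lesssim|m_0|^2+\sum_j 2^{(2\alpha-1)j}|m_{j+1}-m_j|^2$. This inequality does follow from Cauchy--Schwarz with weight $2^{\epsilon(k-j)}$ for any $0<\epsilon<1-2\alpha$.

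\textbf{Comparison.} Both proofs use $\alpha<\frac12$ at the same point: $\delta^{1-2\alpha}\to0$ in the paper, summability of $2^{(2\alpha-1)k}$ in yours. The paper's proof is shorter. Yours bounds the weighted integral directly, so it needs no a priori finiteness of $\int_0^1|f|^2v^{-2\alpha}\dif v$, whereas absorption does. That finiteness is automatic for $f\in\cH_{s,t}$ by \eqref{eq:plancherel}. It is not automatic when the lemma is used to prove membership, as for restrictions in \autoref{lem:restriction}; there the absorption argument would first need a truncation of the weight.

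\textbf{Slips to fix.} Neither of the following two affects your conclusion.
\begin{enumerate}[label=(\roman*)]
\item Jensen gives $\int_{I_k}|f-m_k|^2\leq|I_k|^{-1}\int_{I_k^2}|f(x)-f(y)|^2\dif x\dif y$; your factor $2^{-k}\cdots2^{k}$ is garbled.
\item The increment bound should be $|m_{j+1}-m_j|^2\lesssim 2^{2j}\,2^{-j(1+2\alpha)}\int_{I_j\times I_{j+1}}\frac{|f(x)-f(y)|^2}{|x-y|^{1+2\alpha}}\dif x\dif y$. That is, the factor is $2^{j(1-2\alpha)}$, not $2^{j(2\alpha-1)}$ as you wrote. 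It is this corrected version that gives $2^{(2\alpha-1)j}|m_{j+1}-m_j|^2\lesssim\int_{I_j\times I_{j+1}}(\cdots)$, which is what you actually use.
\end{enumerate}

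\textbf{The case $H=\frac12$.} Your parenthetical ``or is trivially comparable'' is false. The integral $\int_{[0,1]^2}|f(x)-f(y)|^2|x-y|^{-1}\dif x\dif y$ is not controlled by $\|f\|_{L^2}^2$; for $f=\cos(2\pi k\,\cdot)$ it grows like $\log k$. The $H=\frac12$ case only makes sense with the Gagliardo term dropped, which is how the paper's ``obvious'' should be read.
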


\begin{proof}
This is a standard fact, but we give a proof for convenience.
The case \(H=1/2\) is obvious, so take \(H < 1/2\)
and denote \(\alpha=1/2-H\in (0,1/2)\).
The lower bound \(\| f \|^2_{\cH_{s,t}} \gtrsim_H [f]_{H^{\alpha}([s,t])}^2 + |t-s|^{-2\alpha}\|f\|_{L^2([s,t])}^2\)
follows immediately from \eqref{eq:plancherel}.
For the upper bound, by scaling and \eqref{eq:plancherel}, it suffices to consider \([s,t] = [0,1]\) and prove that
\begin{equation}\label{eq:hardy}
\int_0^1 |f(v)|^2 v^{-2\alpha} \dif v \lesssim_H [f]_{H^{\alpha}([0,1])}^2 + \|f\|_{L^2([0,1])}^2\;.
\end{equation}
Consider $\delta>0$ small.
The integral over the domain $v\in [\delta,1]$ is easy to bound.
For $v\in [0,\delta]$, we use $|f(v)|^2 \lesssim |f(u)|^2 + |f(v) - f(u)|^2$ and average over $u \in [v/2\delta,v/\delta]$ to obtain
\[
\int_0^{\delta} |f(v)|^2 v^{-2\alpha} \dif v \lesssim
\int_0^{\delta} v^{-2\alpha}
\delta v^{-1}\int_{v/2\delta}^{v/\delta} \{ |f(u)|^2  + |f(v) - f(u)|^2 \} \dif u
\dif v\;.
\]
The first double integral is bounded above by
\[
\delta \int_0^1\dif u |f(u)|^2 \int_{\delta u}^{2\delta u} v^{-1-2\alpha}\dif v \lesssim \delta^{1-2\alpha} \int_0^1 |f(u)|^2 u^{-2\alpha}\dif u\;,
\]
which can be absorbed into the left-hand side of \eqref{eq:hardy} for $\delta$ small enough because \(1-2\alpha>0\).
Next, since \(|v-u| \asymp_\delta v\) for \(u\in [v/2\delta,v/\delta]\), the second double integral is bounded by a multiple of \([f]_{H^{\alpha}([0,1])}^2\).
\end{proof}

\begin{lem}\label{lem:restriction}
For $(s,t) \in \triangle$ the maps $\cH_{s,t} \hookrightarrow \cH$ defined by zero extension is an isometry onto its image, the map $\cH \twoheadrightarrow \cH_{s,t}$ defined by restriction is continuous, and they are partial inverses to each other.
\end{lem}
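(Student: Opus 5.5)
The plan is to reduce both claims to the explicit description of $\cH_{s,t}$ and $\cH$ as function spaces given by \eqref{eq:fracSobolev}. For $f \in \cH_{s,t}$, let $\iota f$ denote its zero extension to $[0,1]$. The zero extension to $\R$ of $\iota f$ is the same function $f^*$ on $\R$ as the zero extension of $f$. The inner product in \eqref{eq:fracSobolev} depends only on $f^*$, so $\langle \iota f, \iota g\rangle_{\cH} = \langle f, g\rangle_{\cH_{s,t}}$. This makes $\iota$ an isometry onto its image. Since $\cH_{s,t}$ is complete, the image is closed.

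One could also see this abstractly, consistently with the commuting diagram above. On elementary functions, $\iota \mathbbm 1_{[s,u]} = \mathbbm 1_{[s,u]}$, and both inner products are $\E X_{s,u}X_{s,v}$. So $\iota$ is isometric on $\cE_{s,t}$ and extends by density. The identification via \eqref{eq:fracSobolev} confirms that this extension really is zero extension of functions.

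For restriction $\rho\colon f\mapsto f|_{[s,t]}$, I would use \autoref{lem:characterization} twice. Take $f\in\cH$. Restricting the domain of integration of the Gagliardo seminorm gives $[f|_{[s,t]}]_{H^{1/2-H}([s,t])} \le [f]_{H^{1/2-H}([0,1])}$. For the $L^2$ term I would not use \autoref{lem:characterization} directly, since it only gives $\|f\|_{L^2[0,1]}^2$ with the weight $1$, not the weight $|t-s|^{2H-1}$, which blows up as $t-s\to0$. Instead I would use the second line of \eqref{eq:plancherel} on $[0,1]$, which controls $\int_0^1 |f(v)|^2 (v^{2H-1}+(1-v)^{2H-1})\dif v$, together with the cross term of the Gagliardo seminorm on $[0,1]$ split into $[s,t]\times([0,1]\setminus[s,t])$.

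This $L^2$ term is the main obstacle. The cleanest route is a Hardy-type inequality on $[s,t]$, i.e.\ \eqref{eq:hardy} rescaled, showing
\[
\int_s^t |f(v)|^2\big((v-s)^{2H-1}+(t-v)^{2H-1}\big)\dif v \lesssim_H [f]^2_{H^{1/2-H}([s,t])} + |t-s|^{2H-1}\|f\|^2_{L^2([s,t])}\;.
\]
This still leaves $|t-s|^{2H-1}\|f\|^2_{L^2[s,t]}$ to bound by $\|f\|_{\cH}^2$ uniformly in $(s,t)$. Uniformity is not actually needed, though: continuity is claimed for each fixed $(s,t)$. Then $|t-s|^{2H-1}\|f\|^2_{L^2[s,t]} \le |t-s|^{2H-1}\|f\|^2_{L^2[0,1]} \lesssim_{s,t} \|f\|_{\cH}^2$ by \autoref{lem:characterization} on $[0,1]$. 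Combined with the seminorm bound, this gives $\|\rho f\|_{\cH_{s,t}} \lesssim_{H,s,t} \|f\|_{\cH}$, so $\rho$ is bounded. I expect the only care needed to be ensuring that \autoref{lem:characterization} is applied as a norm equivalence on $\cH_{s,t}$, including that $\rho f$ belongs to $\cH_{s,t}$. The finiteness of the right-hand side of \autoref{lem:characterization} characterises membership, since \eqref{eq:plancherel} is an equivalence with $[f^*]_{H^{1/2-H}(\R)}$.

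Finally, $\rho\circ\iota=\mathrm{id}_{\cH_{s,t}}$ holds trivially, since restricting a zero extension returns $f$. Also $\iota\circ\rho$ is the identity on the image of $\iota$, namely on functions vanishing a.e.\ off $[s,t]$. Hence the two maps are partial inverses.
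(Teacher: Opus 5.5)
Your proposal is correct and follows the paper's route. The isometry comes from \eqref{eq:fracSobolev}, since zero-extending twice gives the same $f^*$. Boundedness of restriction follows from \autoref{lem:characterization} applied on $[s,t]$ and on $[0,1]$, with a constant depending on $(s,t)$, exactly as you conclude, so the Hardy-type detour is unnecessary.

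The one thing to add, as the paper does, is to set aside the case $H=1/2$ as trivial. There $\cH_{s,t}=L^2[s,t]$, and the Gagliardo seminorm with exponent $0$ is not the right quantity.
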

\begin{proof}
The case $H = 1/2$ is obvious, so take $H < 1/2$. The claim regarding the map $\cH_{s,t} \hookrightarrow \cH$ follows directly by \eqref{eq:fracSobolev} and composing zero-extensions.
The second claim follows in turn from \autoref{lem:characterization}.
The final claim is obvious because zero-extension is a right inverse to restriction.
\end{proof}

Given the description of $\cH_{s,t}$ as a subspace of $L^2[s,t]$, we have a continuous embedding
\[
\cH^{(d)}_{s,t} \subset L^2([s,t], \mathbb R^d).
\]
Elements of $\cH$ belong to $\cH_{s,t}$ if and only if they are (represented by functions essentially) supported in $[s,t]$. Tensor products of $\cH_{s,t}$ are also continuously embedded into $L^2$ spaces:
\begin{equation}
(\cH_{s,t}^{\alpha_1 \ldots \alpha_n})^{\odot m} \subset L^2([s,t], \mathbb R^{\{ \alpha_1 ,\ldots , \alpha_n \}})^{\otimes m} = L^2([s,t]^m, (\mathbb R^{\{ \alpha_1 ,\ldots , \alpha_n \}})^{\otimes m})\;,
\end{equation}
and symmetry of the tensor product translates to $\mathfrak S_m$-equivariance, $f(u_{\sigma_1}, \ldots, u_{\sigma_m}) = \sigma_* f(u_1,\ldots, u_m)$, where $\sigma_*$ is the action on $m$-fold tensor powers defined analogously on elementary tensors. 

The next lemma will be needed to rule out certain rough path components having a first chaos projection.
We write
\[
C^\gamma_0([s,t])
\coloneqq
\{\omega\in \CHol{\gamma}([s,t]) \,:\, \omega_s=0\}, \qquad C^\gamma_0 \coloneqq C^\gamma_0([0,1]).
\]

\begin{lem}\label{lem:no1}
Let $h^0 \colon [0,1] \to \mathbb R$ be a deterministic path with $h^0_0=0$, $\overline h{}^1 \in \cH$, $h_t^1 \coloneqq \overline h{}^1\mathbbm 1_{[0,t]} \in \cH$, and assume that the (non-centred) Gaussian process
\[
Z_t = h^0_t + \sko(h^1_t), \qquad Z_0 = 0,
\]
has a.s.\ $\gamma$-H\"older sample paths with $\gamma > H$. Then $h^0 \in C^\gamma_0$ and $h^1 = 0$.
\end{lem}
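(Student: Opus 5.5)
The plan is to separate the deterministic drift from the centred Gaussian part, and then use the lower bound in \autoref{lem:characterization} to show that a centred Wiener integral process which is more regular than $H$ must vanish.

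\emph{Step 1: isolating the centred part.} Write $Y_t \coloneqq \sko(h^1_t)$, so that $Y$ is a centred Gaussian process and $Z = h^0 + Y$. Let $Z'$ be an independent copy of $Z$, realised on a product space, and write $Z' = h^0 + Y'$. Then $Z - Z' = Y - Y'$ has a.s.\ $\gamma$-H\"older paths. Since $Y$ is centred Gaussian, $Y - Y'$ has the same law as $\sqrt 2\, Y$, so $Y$ itself has a.s.\ $\gamma$-H\"older paths. Consequently $h^0 = Z - Y$ is a deterministic path with $\gamma$-H\"older regularity. Together with $h^0_0 = 0$ this gives $h^0 \in C^\gamma_0$, which is the first claim.

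\emph{Step 2: a variance bound.} I will apply Fernique's theorem to the centred Gaussian process $Y$, viewed as a Gaussian random element of the separable space obtained as the closure of smooth paths in a slightly weaker H\"older norm $\gamma' \in (H,\gamma)$; this is a routine adjustment. It yields $\E\|Y\|_{\Hol{\gamma'}}^2 < \infty$, and hence
\[
\E |Y_{s,t}|^2 \lesssim |t-s|^{2\gamma'}, \qquad (s,t)\in\triangle .
\]
This step, passing from almost sure H\"older regularity to a second-moment bound, is the main technical point. It is standard, but one must check measurability and separability, which is why I use $\gamma'$ rather than $\gamma$. By the isometry \eqref{eq:isometry}, $\E|Y_{s,t}|^2 = \|\overline h{}^1 \mathbbm 1_{(s,t]}\|_{\cH}^2$.

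\emph{Step 3: forcing $\overline h{}^1 = 0$.} The function $\overline h{}^1\mathbbm 1_{(s,t]}$ is supported in $[s,t]$. By \autoref{lem:restriction}, its $\cH$-norm equals its $\cH_{s,t}$-norm. The lower bound in \autoref{lem:characterization} then gives
\[
|t-s|^{2H-1}\int_s^t |\overline h{}^1(v)|^2 \dif v \lesssim_H \|\overline h{}^1 \mathbbm 1_{(s,t]}\|^2_{\cH} \lesssim |t-s|^{2\gamma'}.
\]
Hence $\frac{1}{t-s}\int_s^t |\overline h{}^1|^2 \lesssim |t-s|^{2(\gamma'-H)}$. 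This tends to $0$ as $t-s\to 0$ because $\gamma' > H$. For $H = 1/2$, $\cH = L^2$ and the same inequality is immediate.

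By the Lebesgue differentiation theorem, applicable since $\overline h{}^1 \in L^2$ via the embedding $\cH\subset L^2$, we get $|\overline h{}^1(v)|^2 = 0$ for a.e.\ $v$. Therefore $\overline h{}^1 = 0$ in $\cH$, and so $h^1_t = 0$ for all $t$, completing the proof.
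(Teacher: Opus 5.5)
Your proof is correct and follows essentially the same route as the paper. You separate the deterministic part from the centred Wiener integral, use Fernique to get $\E|Y_{s,t}|^2 \lesssim |t-s|^{2\gamma'}$, and conclude with the lower bound of \autoref{lem:characterization} and Lebesgue differentiation. The differences are minor: you show $h^0$ is H\"older by symmetrising with an independent copy rather than identifying it as the Bochner mean of the Gaussian measure on $C^\gamma_0$, and you pass to $\gamma'<\gamma$ to handle separability, which the paper leaves implicit.
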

\begin{proof}
Let $h^1_{s,t} \coloneqq \overline h{}^1|_{[s,t]}$ for any $(s,t) \in \triangle$.
By \autoref{lem:restriction}, $h^1_{s,t} \in \cH_{s,t}$ and $\overline h{}^1 \mathbbm 1_{[s,t]} \in \cH$. Since $Z$ defines a (non-centred) Gaussian measure on $C^\gamma_0$ with mean $h^0$, it follows that $h^0 \in C^\gamma_0$ (a $C^\gamma_0$-valued Bochner integral) and $\sko(h^1_\cdot)$ defines a centred Gaussian measure on $C^\gamma_0$. By Fernique's theorem, in particular $\| \sko(h^1_\cdot) \|_{\Hol\gamma} \in L^2$,
and therefore there exists $K \in \mathbb R$ such that for all $(s,t) \in \triangle$
\[
\| \sko(h^1_{s,t}) \|_{L^2} \leq K (t-s)^\gamma
\;.
\]
On the other hand, by \autoref{lem:characterization},
\begin{align*}
\| h^1 \|^2_{\cH_{s,t}} &\gtrsim (t-s)^{2H - 1} \| h^1 \|_{L^2[s,t]}^2.
\end{align*}
Putting these two together,
\[
(t-s)^{2H - 1} \| h^1 \|_{L^2[s,t]}^2 \lesssim_H \| h^1 \|^2_{\cH_{s,t}} =  \| \sko(h^1_{s,t}) \|_{L^2}^2 \lesssim_\gamma (t-s)^{2\gamma}.
\]
Therefore, for almost all $s \in [0,1]$, by the Lebesgue differentiation theorem and the fact that $\gamma > H$ we have
\[
h^1(s)^2 = \lim_{t \searrow s}\frac{1}{t-s}\int_s^t h^1(u)^2 \dif u \lesssim_{H,\gamma} \lim_{t \searrow s} (t-s)^{2(\gamma - H)} = 0. \qedhere
\]
\end{proof}

The following lemma is the main technical ingredient needed to exclude the existence of a local rough path lift for \(H\leq 1/4\).
\begin{lem}\label{lem:triangleH}
$\mathbbm 1_\triangle \in \cH^{\otimes 2}$ if and only if $H > 1/4$.
\end{lem}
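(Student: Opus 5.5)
The plan is to compute everything on the Fourier side. By \eqref{eq:fracSobolev}, $\cH$ is, up to a constant, isometric via $f\mapsto \widehat{f^*}$ to a closed subspace of $L^2(\R,|x|^{1-2H}\dif x)$, namely the image of functions supported in $[0,1]$. Taking Hilbert tensor products, $\cH^{\otimes 2}$ is then isometric to a closed subspace of $L^2(\R^2,|x|^{1-2H}|y|^{1-2H}\dif x\dif y)$. This subspace is the image of those $F\in L^2([0,1]^2)$ whose Fourier transform has finite weighted norm. To justify this identification I would argue as follows.
\begin{itemize}
\item Elementary tensors $f\otimes g$ with $f,g$ smooth and compactly supported in $(0,1)$ are dense in $\cH^{\otimes 2}$, since such $f$ are dense in $\cH$ (closure of step functions).
\item For a function $F\in L^2([0,1]^2)$ with finite weighted Fourier norm, mollify it and cut it off slightly inside $[0,1]^2$. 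This approximates $F$ in the weighted norm, which shows the closed subspace is exactly this set.
\item The cut-off step needs the two-variable analogue of \autoref{lem:characterization}. For $\mathbbm 1_\triangle$ this can be bypassed by approximating directly with $\mathbbm 1_{\triangle}\mathbbm 1_{[\eps,1-\eps]^2}$ and showing convergence of the Fourier integrals below.
\end{itemize}
So the statement reduces to deciding when
\[
I_H\coloneqq\int_{\R^2}|\widehat{\mathbbm 1_\triangle}(x,y)|^2|x|^{1-2H}|y|^{1-2H}\dif x\dif y<\infty .
\]

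Next I compute the transform explicitly. Integrating first in $u\in[0,v]$ and then in $v\in[0,1]$ gives
\[
\widehat{\mathbbm 1_\triangle}(x,y)=\int_0^1\!\!\int_0^v e^{i(xu+yv)}\dif u\dif v=\frac1{ix}\Big(\frac{e^{i(x+y)}-1}{i(x+y)}-\frac{e^{iy}-1}{iy}\Big).
\]
Trivially $|\widehat{\mathbbm 1_\triangle}|\leq 1/2$, and this handles bounded regions, where the weight is locally integrable. In the region where $|x|$ is bounded and $|y|$ is large, write the transform as $\int_0^1 e^{iyv}g_x(v)\dif v$ with $g_x(v)=(e^{ixv}-1)/(ix)$. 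This $g_x$ is smooth, uniformly Lipschitz, and vanishes at $0$, so one integration by parts gives a bound $O(1/|y|)$. The weighted contribution there is then $\int^\infty |y|^{-1-2H}\dif y<\infty$.

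The main region, and the only source of divergence, is $|x|$ large. There I use
\[
|\widehat{\mathbbm 1_\triangle}|\lesssim \frac{1}{|x|}\Big(\min\big(1,|x+y|^{-1}\big)+\min\big(1,|y|^{-1}\big)\Big).
\]
\begin{itemize}
\item \emph{The $|y|^{-1}$ term.} Its weighted contribution is at most $\int_{|x|>1}|x|^{-1-2H}\dif x\int\min(1,|y|^{-2})|y|^{1-2H}\dif y<\infty$.
\item \emph{The $|x+y|^{-1}$ term.} For fixed large $x$, split into the strip $|x+y|\leq1$, where $|y|\asymp|x|$, and its complement, where one compares $|y|$ with $|x|$. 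This gives $\int\min(1,|x+y|^{-2})|y|^{1-2H}\dif y\lesssim|x|^{1-2H}$, hence a total bound of $\int_{|x|>1}|x|^{-2}|x|^{1-2H}|x|^{1-2H}\dif x=\int_{|x|>1}|x|^{-4H}\dif x$. This is finite precisely when $H>1/4$, which proves the \say{if} direction.
\end{itemize}

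For the converse, with $H\leq1/4$, I restrict to the strip $S=\{|x|\geq R,\ |x+y|\leq 1/2\}$. On $S$ one has $|(e^{i(x+y)}-1)/(i(x+y))|\geq c>0$, because $|e^{i\theta}-1|/|\theta|\geq c$ for $|\theta|\leq1/2$. Also $|y|\geq|x|-1/2$, so $|(e^{iy}-1)/(iy)|\leq 2/(|x|-1/2)\leq c/2$ once $R$ is large. Hence $|\widehat{\mathbbm 1_\triangle}|\gtrsim 1/|x|$ on $S$, and therefore $I_H\gtrsim\int_{|x|>R}|x|^{-4H}\dif x=\infty$.

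The computational part is routine. I expect the main obstacle to be the functional-analytic identification in the first step, namely that membership of the $L^2$ function $\mathbbm 1_\triangle$ in $\cH^{\otimes 2}$ is equivalent to finiteness of $I_H$. The \say{only if} direction is safe: the tensor norm dominates the weighted Fourier norm of any $L^2$ limit, since convergence in $\cH^{\otimes2}\subset L^2([0,1]^2)$ implies convergence of the Fourier transforms. For the \say{if} direction I would first try the truncation $\mathbbm 1_\triangle\mathbbm 1_{[\eps,1-\eps]^2}$ mollified at scale $\eps^2$. Its Fourier bounds are uniform in $\eps$ by the same computation, so dominated convergence should give convergence in the weighted norm.
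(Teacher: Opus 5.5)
Your proof is correct. Its core is the same as the paper's: the transform of $\mathbbm 1_\triangle$ decays only like $1/|x|$ near the anti-diagonal, where both weights are $\asymp |x|^{1-2H}$, and this produces $\sum_k k^{-4H}$ or $\int|x|^{-4H}\dif x$. The route to that computation is different, though.

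\textbf{The paper's route.} It combines \autoref{lem:characterization} with the Fourier-series description of $H^{1/2-H}([0,1])$, which coincides with the periodic space because $1/2-H<1/2$. Up to equivalent norms, $\cH^{\otimes 2}$ is then a product-weighted $\ell^2(\Z^2)$. Membership is read off directly from the explicit coefficients, which are supported on the two axes and the line $k_1=-k_2$. No approximation argument is needed in either direction.

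\textbf{Your route.} You use the exact isometry \eqref{eq:fracSobolev}. This spares you the upper bound in \autoref{lem:characterization} (the Hardy-type estimate) and the comparison with periodic Sobolev spaces. The price is that it only realises $\cH^{\otimes 2}$ as a closed subspace of $L^2(\R^2,|x|^{1-2H}|y|^{1-2H}\dif x\dif y)$. As a result, the divergence for $H\le 1/4$ is immediate. Sufficiency, however, costs you the truncation--mollification argument, plus region-by-region estimates in place of the paper's exact three-line sum.

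\textbf{Points to make explicit.} In the sufficiency step you use one ingredient silently: that $C_c^\infty((0,1)^2)\subset\cH^{\otimes 2}$, with norm given by the weighted Fourier integral. One way to see this is to expand in a Fourier series on a box, multiply by a product cutoff $\chi\otimes\chi$, and note that the resulting series of elementary tensors converges absolutely in $\cH^{\otimes 2}$. Density of smooth elementary tensors, as in your first bullet, is not quite the statement needed. Finally, the uniform domination for your truncations is cleanest via an exact identity. The set $\{\eps\le u\le v\le 1-\eps\}$ is an affine copy of the triangle, so the Fourier transform of $\mathbbm 1_\triangle\mathbbm 1_{[\eps,1-\eps]^2}$ equals $\lambda^2e^{i\eps(x+y)}\widehat{\mathbbm 1_\triangle}(\lambda x,\lambda y)$ with $\lambda=1-2\eps$. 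Mollification then only multiplies by a factor of modulus at most $1$.
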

\begin{proof}
By \autoref{lem:characterization} and the well-known Fourier series representation for the fractional Sobolev norm on \([0,1]\), we have
\[
\|f\|^2_{\cH^{\otimes 2}} \asymp \sum_{k\in\Z^2} |\scal{f,e^{i2\pi\scal{k,\cdot}}}_{L^2[0,1]^2}|^2 (|k_1|+1)^{1 - 2H}(|k_2|+1)^{1 - 2H}\;.
\]
Taking $f= \bb1_{\triangle[0,1]}$, a direct computation yields
\[
\scal{\bb1_{\triangle[0,1]},e^{i2\pi\scal{k,\cdot}}}_{L^2[0,1]^2} = \int_0^1 \int_0^y e^{i2\pi (k_1 x + k_2 y)}\dif x  \dif y 
=
\begin{dcases}
	\frac{1}{2}
	\quad &\text{if } k_1=k_2=0
	\\
	-\frac{1}{i2\pi k_1}
	\quad &\text{if } k_1\neq 0= k_2
	\\
	\frac{1}{i2\pi k_2}
	\quad &\text{if } k_1=0\neq k_2
	\\
	\frac{1}{i2\pi k_1}
	\quad &\text{if } k_1=-k_2\neq 0
	\\
	0 \quad &\text{otherwise.}
\end{dcases}
\]
We thus obtain that
\[
\|\bb1_{\triangle[0,1]}\|_{(\cH^1)^{\otimes 2}}^2
\asymp 
1 + \sum_{k_1 > 0} k_1^{-2} k_1^{1-2H} + \sum_{k_2 > 0} k_2^{-2} k_2^{1-2H} + \sum_{k_1 > 0} k_1^{-2} k_1^{2-4H}\;.
\]
The first two sums are finite for any $H>0$, while the last, $\sum_{k_1>0} k_1^{-4H}$, converges if and only if $H > \frac14$.
\end{proof}

The following embedding will instead be useful when we consider $H > \frac 14$.
\begin{lem}\label{lem:holder}
Let $H \in (0, 1/2)$ and $\gamma > 1/2 - H$.
Then the continuous inclusion $C^\gamma_0([s,t]) \subset \cH_{s,t}$ holds with
\[
\| f \|_{\cH_{s,t}} \lesssim_{H, \gamma} \| f \|_{\Hol\gamma} (t-s)^{\gamma + H}.
\]
For $H=1/2$, the same holds for any $\gamma \geq 0$.
\end{lem}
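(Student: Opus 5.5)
We would prove \autoref{lem:holder} by applying \autoref{lem:characterization} and bounding its two terms separately for $f\in C^\gamma_0([s,t])$, which vanishes at $s$. By scaling we may reduce to $[s,t]=[0,1]$. If $f_\lambda(u)=f(s+\lambda u)$ with $\lambda=t-s$, then $[f]^2_{H^{\alpha}([s,t])}=\lambda^{1-2\alpha}[f_\lambda]^2_{H^\alpha([0,1])}$ with $\alpha=1/2-H$, so $\lambda^{1-2\alpha}=\lambda^{2H}$. Likewise $\|f\|^2_{L^2[s,t]}=\lambda\|f_\lambda\|^2_{L^2[0,1]}$, and $\|f_\lambda\|_{\Hol\gamma}=\lambda^\gamma\|f\|_{\Hol\gamma}$. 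Both terms in \autoref{lem:characterization} therefore scale as $\lambda^{2H}$ times the $[0,1]$ quantity, which yields the factor $\lambda^{2H}\lambda^{2\gamma}$. This gives exactly $(t-s)^{2(\gamma+H)}$ once we show $\|g\|_{\cH_{0,1}}\lesssim\|g\|_{\Hol\gamma}$ for $g\in C^\gamma_0([0,1])$.

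On $[0,1]$ the argument is direct. For the $L^2$ term, $g(0)=0$ gives $|g(v)|\le\|g\|_{\Hol\gamma}v^\gamma$, so $\|g\|_{L^2}\le\|g\|_{\Hol\gamma}$. For the Gagliardo seminorm we bound
\[
[g]^2_{H^\alpha([0,1])}\le\|g\|^2_{\Hol\gamma}\int_{[0,1]^2}|v-u|^{2\gamma-1-2\alpha}\dif u\dif v .
\]
The integral is finite exactly when $2\gamma-2\alpha>0$, i.e.\ $\gamma>1/2-H$, which is the stated hypothesis. This is the only point where that assumption enters, and checking it is the main (though mild) obstacle.

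The inclusion is continuous and linear, so the bound also gives continuity. Strictly, membership in $\cH_{s,t}$ requires the zero extension to lie in $H^{1/2-H}(\R)$. This follows from \eqref{eq:plancherel}, whose boundary term is controlled in \autoref{lem:characterization}, combined with the finiteness shown above.

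For $H=1/2$ we have $\alpha=0$, and $\cH_{s,t}=L^2[s,t]$ with its usual norm, so no seminorm term appears. The bound $|f(v)|\le\|f\|_{\Hol\gamma}(v-s)^\gamma$ then gives $\|f\|_{L^2[s,t]}\lesssim\|f\|_{\Hol\gamma}(t-s)^{\gamma+1/2}$ for every $\gamma\ge0$.
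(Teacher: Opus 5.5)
Your proposal is correct and follows essentially the same route as the paper. Both split the norm via \autoref{lem:characterization}, bound the $L^2$ term using $f(s)=0$, and bound the Gagliardo seminorm by $\|f\|_{\Hol\gamma}^2\int|v-u|^{2(\gamma+H)-2}$, which is finite exactly when $\gamma>1/2-H$. The only difference is that you rescale to $[0,1]$ first, whereas the paper does the change of variables directly inside the seminorm integral on $[s,t]$.
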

\begin{proof}
Consider first $H = \frac 12$, for which $\cH_{s,t} = L^2([s,t])$, and $f \in C^\gamma_0([s,t])$.
Using that $|f(v)| \leq \|f\|_{\Hol\gamma}(v-s)^\gamma$ (because $f(s) = 0$),
\begin{equation}\label{eq:L2case}
\| f \|_{L^2[s,t]}^2 = \int_s^t |f(v)|^2 \dif v \leq \| f \|_{\Hol\gamma}^2 \int_s^t (v-s)^{2\gamma} \dif v \lesssim_\gamma \| f \|_{\Hol\gamma}^2 (t-s)^{2\gamma + 1},
\end{equation}
and $2\gamma + 1 = 2(\gamma + H)$ for $H = \frac 12$. Assume now $H < \frac 12$.
Applying \autoref{lem:characterization},
the term \(|t-s|^{2H-1}\|f\|^2_{L^2[s,t]}\) is bounded above by a multiple of \(\| f \|_{\Hol\gamma}^2 (t-s)^{2(\gamma + H)}\) by \eqref{eq:L2case}.
For the first term \([f]^2_{H^{1/2-H}[s,t]}\), we have
\[
\int_{[s,t]^2} |v-u|^{2(\gamma + H) - 2} \dif u \dif v = (t-s)^{2(\gamma + H)} \int_{[0,1]^2} |y - x|^{2(\gamma + H) - 2} \dif x \dif y
\]
by the changes of variables $x = \frac{u-s}{t-s}$ and $y = \frac{v-s}{t-s}$ and the integral is finite since
\[
2(\gamma + H) - 2 > -1 \iff \gamma > 1/2 - H .
\]
Therefore
\[
[f]^2_{H^{1/2-H}[s,t]} \leq \| f \|_{\Hol\gamma}^2 \int_{[s,t]^2} |v-u|^{2(\gamma + H) - 2} \dif u \dif v \lesssim_{H,\gamma} \| f \|_{\Hol\gamma}^2 (t-s)^{2(\gamma + H)}\;.
\qedhere
\]
\end{proof}

\begin{thm}[Local $L^2$ rough path lifts of fBm]\label{thm:localfbm} Let $H \in (0,1/2]$ and $0 < \eta <H$ with $1/\eta < \lfloor 1/H \rfloor + 1$. Let $X$ be a $d$-dimensional $H$-fBm.
\begin{enumerate}[label=(\alph*)]
	\item \label{pt:H_less}If $d\geq 2$ and $H \leq 1/4$ there exists no local, square-integrable $\eta$-RPL of $X$.
	\item \label{pt:H_greater}
If $H > 1/4$, denoting $\overline{\bX}$ the canonical lift \cite{CQ02}, any local, square-integrable $\eta$-RPL $\bX$ must satisfy
\begin{equation}\label{eq:bX_level2}
	\bX^{(2)}_{s,t} = \overline{\bX}{}^{(2)}_{s,t} + \varphi^{(2)}_{s,t}
\end{equation}
and if $H \in (1/4, 1/3]$ additionally
\begin{equation}\label{eq:bX_level3}
	\bX^{\alpha\beta\gamma}_{s,t} = \overline{\bX}{}^{\alpha\beta\gamma}_{s,t} + \int_s^t \varphi^{\alpha\beta}_{s,u} \dif X^\gamma_u + \int_s^t \varphi^{\beta\gamma}_{u,t} \dif X^\alpha_u + \varphi^{\alpha\beta\gamma}_{s,t}
	\;,
\end{equation}
where the integrals are defined as Wiener integrals (e.g.\ $\int_s^t \varphi^{\alpha\beta}_{s,u} \dif X^\gamma_u = \sko(\varphi^{\alpha\beta}_{s,\cdot}\mathbbm 1_{[s,t]}^\gamma)$ and
\[
(0, \varphi^{(2)}, \varphi^{(3)})
\]
is a deterministic $\eta$-H\"older rough path above the zero path. Conversely, given any deterministic $\eta$-rough path $(0, \varphi^{(2)}, \varphi^{(3)})$ above the zero path, the formulas \eqref{eq:bX_level2}-\eqref{eq:bX_level3} define a local, square-integrable, geometric $\eta$-RPL of $X$. (Throughout this statement, we intend $\bX$ and $\varphi$ to be only considered up to level-$2$ if $\eta \in (\frac 13, \frac 12]$.)
\end{enumerate}
\end{thm}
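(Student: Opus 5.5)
The strategy is to study the difference between $\bX$ and a reference object via Wiener chaos. Locality combined with Chen's identity should force all chaos components to vanish except the low-order ones described in the statement.

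\textbf{Level 2, off-diagonal entries.} Fix $\alpha\neq\beta$. By locality and square-integrability, $\bX^{\alpha\beta}_{s,t}\in L^2\Omega^{\alpha\beta}_{s,t}$, so it has a chaos expansion with kernels $f^m_{s,t}\in(\cH^{\alpha\beta}_{s,t})^{\odot m}$, supported in $[s,t]^m$. Chen's identity reads
\[
\bX^{\alpha\beta}_{s,t}=\bX^{\alpha\beta}_{s,u}+\bX^{\alpha\beta}_{u,t}+X^\alpha_{s,u}X^\beta_{u,t}.
\]
The product term lies purely in the second chaos, with kernel $\mathbbm 1^\alpha_{[s,u]}\odot\mathbbm 1^\beta_{[u,t]}$. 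For $m\neq 2$, the kernels are therefore additive, $f^m_{s,t}=f^m_{s,u}+f^m_{u,t}$, with the two summands supported on $[s,u]^m$ and $[u,t]^m$.

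\emph{Chaos $m\ge 2$.} Iterating over dyadic partitions, $f^m_{s,t}$ vanishes off the diagonal blocks $\bigcup_i[t_i,t_{i+1}]^m$ for every partition. Since this holds for arbitrarily fine partitions, $f^m$ is supported on the diagonal $\{u_1=\dots=u_m\}$, which is Lebesgue-null, so $f^m=0$ as an $L^2$ function.

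\emph{Chaos $m=1$.} Here $f^1_{s,t}=\overline h{}^1\mathbbm 1_{[s,t]}$ for a single $\overline h{}^1$. The H\"older regularity $2\eta>H$ of $\bX^{(2)}$ together with \autoref{lem:no1} gives $f^1=0$.

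\emph{Chaos $m=0$.} The deterministic part $\varphi_{s,t}$ is additive.

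\emph{Chaos $m=2$.} Writing $g_{s,t}=f^2_{s,t}-\mathbbm 1^{\alpha\beta}_{\triangle[s,t]}$ (the symmetrised kernel of the formal iterated integral), the function $g$ is additive in the same sense, so by the diagonal argument $g=0$. Hence $f^2_{s,t}=\mathbbm 1_{\triangle[s,t]}$, which must lie in $\cH^{\otimes 2}$. By \autoref{lem:triangleH} this forces $H>1/4$; for $H\le 1/4$ and $d\ge 2$ it is a contradiction, proving \ref{pt:H_less}.

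For $H>1/4$, the second-chaos term with kernel $\mathbbm 1_\triangle$ is exactly $\overline\bX{}^{\alpha\beta}$ (by the $L^2$ construction in \cite{CQ02}). So $\bX^{\alpha\beta}_{s,t}=\overline\bX{}^{\alpha\beta}_{s,t}+\varphi^{\alpha\beta}_{s,t}$ with $\varphi^{\alpha\beta}$ a deterministic increment. The diagonal entries are forced by the geometric constraint $\bX^{\alpha\alpha}=\tfrac12(X^\alpha_{s,t})^2$.

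\textbf{Level 3 ($H\le 1/3$).} Set
\[
D=\bX^{\alpha\beta\gamma}-\overline\bX{}^{\alpha\beta\gamma}-\int\varphi^{\alpha\beta}\dif X^\gamma-\int\varphi^{\beta\gamma}\dif X^\alpha.
\]
Using the Chen relations of both lifts and of $\varphi$, one checks that $D$ satisfies
\[
D_{s,t}=D_{s,u}+D_{u,t}+\varphi^{\alpha\beta}_{s,u}\varphi^{\gamma}_{u,t}+\dots
\]
Because the level-1 part of $\varphi$ is zero, this says that $D$ is additive up to deterministic terms. The Wiener integrals are well-defined: $\varphi^{\alpha\beta}_{s,\cdot}$ is $2\eta$-H\"older with $2\eta>1/2-H$, so \autoref{lem:holder} applies. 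By the same diagonal-support argument, all chaoses $m\ge 2$ of $D$ vanish, and the first chaos vanishes by \autoref{lem:no1} applied with H\"older exponent $3\eta>H$. This leaves a deterministic $\varphi^{(3)}$. That $(0,\varphi^{(2)},\varphi^{(3)})$ is an $\eta$-H\"older rough path follows from taking expectations and using Fernique or hypercontractivity to transfer H\"older bounds.

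\textbf{Converse.} Chen's identity follows by the same algebra run in reverse, and locality is clear from the supports of the kernels. H\"older regularity comes from Kolmogorov's criterion, using \autoref{lem:holder} for the moment bounds $\|\int_s^t\varphi_{s,u}\dif X_u\|_{L^2}\lesssim(t-s)^{2\eta+H}$ together with equivalence of moments in finite chaos. Geometricity can be checked on the symmetric part.

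\textbf{Main obstacle.} I expect the hardest step to be making the diagonal-support argument rigorous within $\cH^{\otimes m}$. Elements of $\cH^{\otimes m}$ may fail to be genuine functions when $H>1/2$, but for $H\le 1/2$ they embed in $L^2$, so one can argue pointwise almost everywhere. A second delicate point is the bookkeeping at level 3, where one must verify that exactly the stated integral terms absorb the cross terms.
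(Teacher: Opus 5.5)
Your proposal is correct and follows essentially the same route as the paper. The shared steps are the chaos expansion, Chen's identity forcing additivity of the kernels, the diagonal-support argument, \autoref{lem:triangleH} for $H\le 1/4$, \autoref{lem:no1} to kill the first chaos, and \autoref{lem:holder} plus a Kolmogorov-type bound for the Wiener integrals.

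The differences are organisational. At level $3$ you subtract the Wiener integrals up front and check that the remainder is exactly additive, whereas the paper identifies the integrand as the $\cH$-limit of Riemann sums in the first-chaos Chen identity. When writing this up, keep two orderings in mind:
\begin{itemize}
\item at level $2$, \autoref{lem:no1} applies only to $\bX^{(2)}-\overline{\bX}{}^{(2)}$, so the second chaos must be identified and removed first;
\item at level $3$, the $3\eta$-H\"older bound on the Wiener integrals is already needed in the forward direction, not only in the converse where you state it.
\end{itemize}
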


\begin{rem}
	The formulas \eqref{eq:bX_level2}-\eqref{eq:bX_level3} for $\bX$ can be interpreted as a stochastic translation of $\overline \bX$ by $\varphi$, cf.\ \cite{TaZa20, BCFP19}.
	Notice however that $\varphi$ does not need to have complementary Young regularity with $X$ (for $H = \frac 14^+$ the regularities sum to $\frac 34^+$),
	so the integrals are only defined canonically thanks to the fact that they are Wiener integrals.
	In particular, $\varphi$ does not need to belong to the Cameron-Martin space, as for the translations considered in Malliavin calculus, see e.g.\ \cite{CF10}.
	The proof below, slightly more generally, classifies all $\mathcal G^{\lfloor 1/H \rfloor}(\mathbb R^d)$-valued $\eta$-H\"older local processes for any $\eta > H/2$.
	It could be interesting to consider more general translations by embedding higher-degree lifts $\bX$ in a rough path space of lower regularity, cf.\ \cite{BFPP22},
	but if this is done at arbitrarily low regularity it would no longer be possible to exclude perturbations of the second level by noise increments.
\end{rem}

\begin{rem}
	Non-existence of certain stochastic integrals has been considered before in \cite{Lyo91}, who treats the example of a deterministic coupling of two scalar Brownian motions.
	The author shows that various specific definitions of path integrals fail to converge for this process due to diverging stochastic area.
	We expect this example, however, not to be an obstruction to our more general notion of local rough path lift, which exists after deterministic renormalisation, cf.\ \cite[Proposition 3.4]{HaiLi22}.
\end{rem}

\begin{proof}[Proof of \autoref{thm:localfbm}]
Let $\bX$ be a square-integrable local $\eta$-RPL of $X$, which for $H \leq 1/4$ will lead to a contradiction. The Chen identity on an arbitrary partition $\pi$ of $[s,t]$ yields
\begin{equation}\label{eq:chen2}
	\boldsymbol X_{s,t}^{\alpha\beta} = \sum_{[u,v] \in \pi} \boldsymbol X_{u,v}^{\alpha\beta} + \sum_{\substack{[u,v], [w,z] \in \pi \\ u < w}} X^\alpha_{u,v} X^\beta_{w,z}
	\;.
\end{equation}
By locality and square-integrability, $\bX$ has Wiener chaos expansion
\begin{equation}\label{eq:Xchaos}
	\boldsymbol X^{\alpha_1\ldots \alpha_n}_{s,t} = \sum_{m = 0}^\infty \sko^m(f_{s,t}^{\alpha_1 \ldots \alpha_n;m}), \qquad f_{s,t}^{\alpha_1 \ldots \alpha_n;m} \in (\cH^{\alpha_1\ldots \alpha_n}_{s,t})^{\odot m}.
\end{equation}
This will only be needed for $n = 1,2,3$, and notice that $f^{\alpha;1}_{s,t} = \mathbbm 1_{[s,t]}^\alpha$.
Suppose now $d\geq 2$ and $\alpha\neq \beta$,
so that $\sum_{\substack{[u,v], [w,z] \in \pi \\ u < w}} X^\alpha_{u,v} X^\beta_{w,z}$ has only a 2nd-chaos component.
Projecting \eqref{eq:chen2} onto $\mathscr W^m$ we obtain the a.e.\ identities
\begin{align*}
	f_{s,t}^{\alpha\beta;m} &= \sum_{[u,v] \in \pi} f^{\alpha\beta;m}_{u,v}\;, \qquad m \neq 2 
	\\
	f_{s,t}^{\alpha\beta;2} &= \sum_{[u,v] \in \pi} f^{\alpha\beta;2}_{u,v} + \sum_{\substack{[u,v], [w,z] \in \pi \\ u < w}} \mathbbm 1^\alpha_{[u,v]} \odot \mathbbm 1^\beta_{[w,z]} \;.
\end{align*}
This implies that, denoting $D_\pi^m = \bigcup_{[u,v] \in \pi} [u,v]^m$, $f^{\alpha\beta;m}_{s,t}$ vanishes outside $D_\pi^m$ for $m > 2$ and any $\pi$.
Moreover, note the identification $(\cH^{\alpha\beta})^{\odot 2} \cong (\cH^{\alpha})^{\odot 2}\oplus (\cH^{\beta})^{\odot 2} \oplus (\cH^\alpha\otimes\cH^\beta)$,
and that $\mathbbm 1^\alpha_{[u,v]} \odot \mathbbm 1^\beta_{[w,z]}$ under this identification corresponds to $\mathbbm 1_{[u,v]}^\alpha\otimes \mathbbm 1_{[w,z]}^\beta\in \cH^\alpha\otimes\cH^\beta$.
Then the component of $f^{\alpha\beta;2}_{s,t}$ in $\cH^\alpha\otimes\cH^\beta$, identified with a function in $L^2([s,t]^2,\R)$, agrees with $\mathbbm 1_{\triangle}$ outside of $D_\pi^2$,
while those in $(\cH^{\alpha})^{\odot 2}\oplus (\cH^{\beta})^{\odot 2}$ vanish outside of $D_\pi^2$, for any $\pi$.
Since $\bigcap_{\pi_k} D_{\pi_k}^m$ is of measure zero for $m \geq 2$ for any sequence of partitions $\pi_k$ with mesh tending to zero, we conclude that
\begin{equation}\label{eq:fabm}
	f^{\alpha\beta;m}_{s,t} =  0 \quad \text{for } m \geq 3\;,
\end{equation}
while $f^{\alpha\beta;2}_{s,t}$ has non-zero component only in $\cH^\alpha\otimes\cH^\beta$ and is equal to $\mathbbm 1_{\triangle[s,t]}$ there.
If $H \leq 1/4$, then $\mathbbm 1_{\triangle}\not\in \cH^\alpha \otimes \cH^\beta$
by \autoref{lem:triangleH}, which is a contradiction, proving \ref{pt:H_less}.
(The restriction $d \geq 2$ was required since otherwise, by symmetry, $f^{\alpha\alpha;2}_{s,t} = \frac12\mathbbm 1_{[s,t]^2} $,
which does belong to $(\cH^{\alpha\alpha}_{s,t})^{\odot 2} = \cH_{s,t}^{\odot 2}$;
cf.\ also \cite[Theorem 3.2]{LN05} for a more general statement regarding symmetrisation in this context.)

We now assume $H > 1/4$ and prove \ref{pt:H_greater} (we no longer assume $d\geq 2$ in the sequel). We proceed with the understanding that third levels are only considered if $H \in (\frac 14, \frac 13]$. Write, similarly to \eqref{eq:Xchaos} (and using locality and square-integrability of $\bX$ and $\overline \bX$)
\begin{equation*}
	\boldsymbol X^{\alpha_1\ldots \alpha_n}_{s,t} - \overline{\boldsymbol X}{}^{\alpha_1\ldots \alpha_n}_{s,t} = \sum_{m = 0}^\infty \sko^m(\varphi_{s,t}^{\alpha_1 \ldots \alpha_n;m}), \qquad \varphi_{s,t}^{\alpha_1 \ldots \alpha_n;m} \in (\cH^{\alpha_1\ldots \alpha_n}_{s,t})^{\odot m},
\end{equation*}
for $n = 1,2,3$. Then (continuing to use round brackets to denote tensor degree, while no brackets are used for Wiener chaos degree) $\varphi^{(1)} = 0$ and the Chen identity for $\bX - \overline \bX$ at level $2$ reads
\begin{equation}\label{eq:chendiff2}
\bX^{(2)}_{s,t} - \overline \bX{}^{(2)}_{s,t} = \bX^{(2)}_{s,u} - \overline \bX{}^{(2)}_{s,u} + \bX^{(2)}_{u,t} - \overline \bX{}^{(2)}_{u,t}.
\end{equation}
We have seen in \eqref{eq:fabm} that the Wiener chaos expansion of $\bX^{(2)}$ and $\overline \bX{}^{(2)}$ stops at the second chaos, and that moreover
the second chaos projections of $\bX^{(2)}$ and $\overline \bX{}^{(2)}$ agree. Thus $\varphi^{(2);m} = 0$ for $m \geq 2$ and \eqref{eq:chendiff2} yields
\[
\varphi_{s,t}^{(2);m} = \varphi_{s,u}^{(2);m} + \varphi_{u,t}^{(2);m}, \qquad m = 0,1.
\]
In particular, $\varphi^{(2);1}_{s,t}$ is the restriction to $[s,t]$ of some element $\varphi^{(2);1}_{0,1} \eqqcolon \overline h{}^1\in\cH$.
Note that for any $H \in (\frac 14, \frac 12]$, $\frac H2 < (\lfloor H^{-1} \rfloor +1)^{-1} < \eta$. The process $\bX^{(2)}_{0,t} - \overline \bX{}^{(2)}_{0,t}$ has a.s.\ $2\eta$-H\"older sample paths where $2\eta>H$
and therefore its components satisfy the assumptions of \autoref{lem:no1}, from which it follows that $\varphi^{(2)} \coloneqq \varphi^{(2);0}$ is $2\eta$-H\"older
and $\varphi^{(2);1}=0$.

Consider now the Chen identity at level $3$ for $\bX - \overline \bX$ with $\pi$ as in \eqref{eq:chen2}:
\begin{equation}\label{eq:chendiff3}
\begin{split}
	\boldsymbol X^{\alpha\beta\gamma}_{s,t} - \overline{\boldsymbol X}{}^{\alpha\beta\gamma}_{s,t} ={} &\sum_{[u,v] \in \pi} ( \boldsymbol X^{\alpha\beta\gamma}_{u,v} - \overline{\boldsymbol X}{}^{\alpha\beta\gamma}_{u,v}) \\
	&+ \sum_{\substack{[u,v], [w,z] \in \pi \\ u < w}} \big( (\boldsymbol X^{\alpha\beta}_{u,v} - \overline{\boldsymbol X}{}^{\alpha\beta}_{u,v}) X^\gamma_{w,z} + X^\alpha_{u,v} (\boldsymbol X^{\beta\gamma}_{w,z} - \overline{\boldsymbol X}{}^{\beta\gamma}_{w,z}) \big) \;.
\end{split}
\end{equation}
Using that the second summand is first chaos-valued (because $\bX^{(2)} - \overline{\bX}{}^{(2)}$ is deterministic), \eqref{eq:chendiff3} projected onto the $m$-th Wiener chaos reads
\begin{equation}\label{eq:chenphi3}
\begin{split}
\varphi^{(3);m}_{s,t} &= \sum_{[u,v] \in \pi} \varphi^{(3);m}_{u,v} \qquad \text{for } m \neq 1 \\
\varphi^{\alpha\beta\gamma;1}_{s,t} &= \sum_{[u,v] \in \pi} \varphi^{\alpha\beta\gamma;1}_{u,v} + \sum_{\substack{[u,v], [w,z] \in \pi \\ u < w}} (\varphi^{\alpha\beta}_{u,v} \mathbbm 1^\gamma_{[w,z]} + \mathbbm 1^\alpha_{[u,v]} \varphi^{\beta\gamma}_{w,z} ) \\
&= \sum_{[u,v] \in \pi} \varphi^{\alpha\beta\gamma;1}_{u,v} + \sum_{[u,v] \in \pi} (\varphi^{\alpha\beta}_{s,u} \mathbbm 1^\gamma_{[u,v]} + \mathbbm 1^\alpha_{[u,v]} \varphi^{\beta\gamma}_{v,t} )
\end{split}
\end{equation}
where as usual we have denoted by $\varphi^{\alpha\beta}$ the components of $\varphi^{(2)} = \varphi^{(2);0}$, and we have used its additivity. As before we conclude $\varphi^{(3);m}$ must vanish for $m \geq 2$. The second summand for $m = 1$ is the discrete approximation \cite[Exercise 2.7.6]{NouPec12} of the Wiener integrand
\[
[s,t] \ni u \mapsto \varphi^{\alpha\beta}_{s,u} \mathbbm 1^\gamma_{[s,t]} + \varphi^{\beta\gamma}_{u,t} \mathbbm 1^\alpha_{[s,t]},
\]
which belongs to $\cH^\alpha_{s,t} \oplus \cH^\gamma_{s,t}$ thanks to the fact that $\varphi^{(2)}_{s,\cdot}$ and $\varphi^{(2)}_{\cdot, t}$ are $2\eta$-H\"older with $2\eta>H$ (again a consequence of additivity)
and $\CHol{2\eta}([s,t]) \subset \cH_{s,t}$ due to \autoref{lem:holder} since 
\[
\eta > \frac{H}{2}, \ H > \frac 14  \implies 2\eta > \frac 12 - H.
\]
Taking limits in $\cH$ for $m = 1$ we obtain
\[
\varphi^{\alpha\beta\gamma;1}_{s,t} = \widetilde \varphi{}^{\alpha\beta\gamma;1}_{s,t} + \varphi^{\alpha\beta}_{s,\cdot} \mathbbm 1^\gamma + \varphi^{\beta\gamma}_{\cdot,t} \mathbbm 1^\alpha
\]
for some $\widetilde \varphi{}^{\alpha\beta\gamma}_{s,t} \in \cH^\alpha_{s,t} \oplus \cH^\beta_{s,t} \oplus \cH^\gamma_{s,t}$ which must be additive, $\widetilde \varphi{}^{\alpha\beta\gamma}_{s,t} = \widetilde \varphi{}^{\alpha\beta\gamma}_{s,u} + \widetilde \varphi{}^{\alpha\beta\gamma}_{u,t}$ (which follows by comparing the discrete approximations), which means that it can be written as $\widetilde \varphi{}^{\alpha\beta\gamma}_{s,t} = \mathbbm 1_{[s,t]} \widetilde \varphi{}^{\alpha\beta\gamma}_{0,1}$. We thus have
\begin{equation}\label{eq:abcholder}
	\bX^{\alpha\beta\gamma}_{s,t} - \overline\bX{}^{\alpha\beta\gamma}_{s,t} - \int_s^t \varphi^{\alpha\beta}_{s,u} \dif X^\gamma_u -  \int_s^t \varphi^{\beta\gamma}_{u,t} \dif X^\alpha_u = \sum_{\lambda \in \{\alpha,\beta,\gamma\}}\int_s^t \widetilde \varphi{}^{\alpha\beta\gamma;1;(\lambda)}_{0,1}(u)\dif X_u^\lambda + \varphi^{\alpha\beta\gamma;0}_{s,t}
	\;.
\end{equation}
We claim that each summand on the left hand side satisfies a $3\eta$-H\"older estimate, uniform in $(s,t) \in \triangle$.
This is clear for $\bX^{\alpha\beta\gamma}_{s,t}$ and $\overline\bX{}^{\alpha\beta\gamma}_{s,t}$ so it remains to argue about the two Wiener integrals. Suppressing indices, we have by \autoref{lem:holder} and hypercontractivity (see e.g.\ \cite[Theorem 2.7.2]{NouPec12})
\[
\|\sko( \varphi_{s,\cdot} \mathbbm 1_{[s,t]}) \|_{L^p} \lesssim_p \|\sko( \varphi_{s,\cdot} \mathbbm 1_{[s,t]}) \|_{L^2} = \| \varphi_{s,\cdot} \|_{\cH_{s,t}} \lesssim_{\eta,H} \| \varphi_{s,\cdot} \|_{\Hol{2\eta}} (t-s)^{2\eta + H}
\;.
\]
Observing that this $2$-parameter process satisfies the Chen identity (as can be immediately verified by linearity of Wiener integration), an identical proof to that of the Kolmogorov continuity criterion for rough paths \cite[Theorem 3.1]{FH20}
(applied with level-$1$ process $(\varphi,X)$, which has inhomogeneous $(2\eta,H^-)$-H\"older regularity),
implies that $\sup_{s<t}(t-s)^{-\rho}|\sko( \varphi_{s,\cdot} \mathbbm 1_{[s,t]})|$ has moments of all orders for any $\rho < 2\eta + H$, and is in particular finite. This proves the claim.

Therefore by \eqref{eq:abcholder} the process $t \mapsto Z_t \coloneqq \int_0^t \widetilde \varphi{}^{\alpha\beta\gamma;1}_\lambda(u)\dif X_u^\lambda + \varphi^{\alpha\beta\gamma;0}_{0,t}$, with a sum over $\lambda$, is $3\eta$-H\"older.
It follows that, for a fixed $\lambda \in \{\alpha,\beta,\gamma\}$,
\begin{align*}
\bigg|\int_s^t \widetilde \varphi{}^{\alpha\beta\gamma;1;(\lambda)}_{0,1}(u)\dif X_u^\lambda + \varphi^{\alpha\beta\gamma;0}_{s,t}\bigg| &= |\E[Z_{s,t} \mid \cF^\lambda]| \leq \E[|Z_{s,t}| \mid \cF^\lambda] \\
&\leq \E\big[\|Z \|_{\Hol{3\eta}} \mid \cF^\lambda\big](t-s)^{3\eta}
\end{align*}
(with the conditional H\"older norm a.s.\ finite) and thus we apply \autoref{lem:no1} once again to the process $t \mapsto \int_0^t \widetilde \varphi{}^{\alpha\beta\gamma;1;(\lambda)}_{0,1}(u)\dif X_u^\lambda + \varphi^{\alpha\beta\gamma;0}_{s,t}$ (no sum over $\lambda$) to conclude that $\widetilde \varphi{}^{\alpha\beta\gamma;1;(\lambda)}_{0,1}=0$ for $\lambda \in \{\alpha,\beta,\gamma\}$ and $\varphi^{\alpha\beta\gamma;0} \in C^{\Hol{3\eta}}_0$.

We must still justify that $(0, \varphi^{(2)}, \varphi^{(3)})$ satisfy the Chen and shuffle identities. For a rough path vanishing at level $1$, the Chen identity is synonymous with additivity of $\varphi^{(2)}$ and $\varphi^{(3)}$, which has already been established. The shuffle identity at degree $2$ is the statement that the symmetric part of $\varphi^{(2)}$ vanishes, which occurs since those of $\bX^{(2)}$ and $\overline \bX{}^{(2)}$ are equal (a consequence of geometricity of $\bX$ and $\overline \bX$). We now check the shuffle relations at degree 3.
First,
\begin{align*}
	\bX^{\alpha\beta}_{s,t} X^\gamma_{s,t} ={}& (\overline\bX{}^{\alpha\beta}_{s,t} + \varphi^{\alpha\beta}_{s,t})X^\gamma_{s,t} \\
	={}& \overline\bX{}^{\alpha\beta\gamma}_{s,t} + \overline\bX{}^{\alpha\gamma\beta}_{s,t} + \overline\bX{}^{\gamma\alpha\beta}_{s,t} + \varphi^{\alpha\beta}_{s,t}X^\gamma_{s,t} \\
	={}& \bX^{\alpha\beta\gamma}_{s,t} + \bX^{\alpha\gamma\beta}_{s,t} + \bX^{\gamma\alpha\beta}_{s,t} + \varphi^{\alpha\beta}_{s,t}X^\gamma_{s,t} \\
	&- \int_s^t \varphi^{\alpha\beta;0}_{s,u} \dif X^\gamma_u - \int_s^t \varphi^{\beta\gamma;0}_{u,t} \dif X^\alpha_u - \int_s^t \varphi^{\alpha\gamma;0}_{s,u} \dif X^\beta_u - \int_s^t \varphi^{\gamma\beta;0}_{u,t} \dif X^\alpha_u \\
	&- \int_s^t \varphi^{\gamma\alpha;0}_{s,u} \dif X^\beta_u - \int_s^t \varphi^{\alpha\beta;0}_{u,t} \dif X^\gamma_u - \varphi^{\alpha\beta\gamma;0}_{s,t} - \varphi^{\alpha\gamma\beta;0}_{s,t} - \varphi^{\gamma\alpha\beta;0}_{s,t} \\
	={}& \bX^{\alpha\beta\gamma}_{s,t} + \bX^{\alpha\gamma\beta}_{s,t} + \bX^{\gamma\alpha\beta}_{s,t} - \varphi^{\alpha\beta\gamma;0}_{s,t} - \varphi^{\alpha\gamma\beta;0}_{s,t} - \varphi^{\gamma\alpha\beta;0}_{s,t}
	\;.
\end{align*}
We thus obtain the desired shuffle relation for $\varphi$ because
\begin{equation}\label{eq:shufflerel}
	\bX^{\alpha\beta}X^\gamma = \bX^{\alpha\beta \,\shuffle\, \gamma}
	\iff \varphi^{\alpha\beta\gamma;0} + \varphi^{\alpha\gamma\beta;0} + \varphi^{\gamma\alpha\beta;0} = 0 
\iff \varphi^{\alpha\beta}\varphi^\gamma = \varphi^{\alpha\beta \shuffle \gamma}
\end{equation}
where the final equivalence is again a consequence of the fact that $\varphi^{(1)} = 0$
and the first statement holds by geometricity of $\bX$ and $\overline \bX$).

Conversely, given a deterministic $\eta$-rough path $(0, \varphi^{(2)}, \varphi^{(3)})$, we must verify regularity, the Chen identity, and the shuffle relations for \(\bX\) defined by \eqref{eq:bX_level2}-\eqref{eq:bX_level3}. The proof of the shuffle relations can be retraced in reverse from \eqref{eq:shufflerel}. Regularity of each summand in $\bX$ is either assumed or has already been argued. Finally, we check the Chen relation directly. Using that the Chen identity holds for $\overline \bX$ and additivity of $\varphi^{(2)}$, the statement at level $2$ is immediate. Similarly, at level $3$, additionally using linearity of the Wiener integral, we have
\begin{align*}
&\bX^{\alpha\beta\gamma}_{s,t} - \bX^{\alpha\beta\gamma}_{s,u} - \bX^{\alpha\beta\gamma}_{u,t}\\
={} &\overline{\bX}{}^{\alpha\beta}_{s,u} X^\gamma_{u,t} + X^\alpha_{s,u} \overline{\bX}{}^{\beta\gamma}_{u,t} + \sko \big( \varphi_{s,\cdot}^{\alpha\beta} \mathbbm 1_{[s,t]}^\gamma - \varphi_{s,\cdot}^{\alpha\beta} \mathbbm 1_{[s,u]}^\gamma - \varphi_{u,\cdot}^{\alpha\beta} \mathbbm 1_{[u,t]}^\gamma \big)  \\
&+ \sko \big( \varphi_{\cdot,t}^{\beta\gamma} \mathbbm 1_{[s,t]}^\alpha - \varphi_{\cdot,u}^{\beta\gamma} \mathbbm 1_{[s,u]}^\alpha - \varphi_{\cdot,t}^{\beta\gamma} \mathbbm 1_{[u,t]}^\alpha \big) + \varphi^{\alpha\beta\gamma}_{s,t} - \varphi^{\alpha\beta\gamma}_{s,u} - \varphi^{\alpha\beta\gamma}_{u,t} \\
={} &\bX^{\alpha\beta}_{s,u} X^\gamma_{u,t} - \varphi^{\alpha\beta}_{s,u}X^\gamma_{u,t} + X^\alpha_{s,u} \bX^{\beta\gamma}_{u,t} - X^\alpha_{s,u}\varphi^{\beta\gamma}_{u,t} + \sko( \varphi^{\alpha\beta}_{s,u} \mathbbm 1^\gamma_{[u,t]}) + \sko(  \mathbbm 1^\alpha_{[s,u]} \varphi^{\beta\gamma}_{u,t}) \\
={} &\bX^{\alpha\beta}_{s,u} X^\gamma_{u,t} + X^\alpha_{s,u} \bX^{\beta\gamma}_{u,t},
\end{align*}
since $\sko( \varphi^{\alpha\beta}_{s,u} \mathbbm 1^\gamma_{[u,t]}) = \varphi^{\alpha\beta}_{s,u}\sko(\mathbbm 1^\gamma_{[u,t]})$ and similar.
\end{proof}

\begin{expl}
Let $X$ be Brownian motion ($H = 1/2$). It may seem like setting, e.g.\ \[
\bX^{\alpha\beta}_{s,t} \coloneqq \overline\bX{}^{\alpha\beta}_{s,t} + \int_s^t X^\alpha_u X^\beta_u \dif u
\] defines a local RPL different to the canonical (Stratonovich) one $\overline \bX$. This is not true however, since $\int_s^t X^\alpha_u X^\beta_u \dif u$ is not measurable w.r.t.\ the sigma algebra $\sigma(X^\alpha_{s,u}, X^\beta_{s,u} : u \in [s,t])$, only w.r.t.\ $\sigma(X^\alpha_u, X^\beta_u : u \in [s,t])$ (the value of this integral depends on the initial value $X_s$). Replacing it with $\int_s^t X^\alpha_{s,u} X^\beta_{s,u} \dif u$ would solve this problem but makes the Chen property fail instead. \autoref{lem:no1} shows that there can be no local increments of $X$ that have the desired regularity.
\end{expl}

Finally, we further constrain the admissible rough path lifts by requiring stationarity or scale-invariance. These are properties of the law of $\bX$ that are respectively the probabilistic analogues of time translation-invariance and parametrisation-invariance introduced in \autoref{sec:determ}. We also consider equality in law under coordinate permutation, to be understood as a spatial isotropy condition. We do not claim that all interesting examples must satisfy these conditions, in particular the third is violated in the case of $3$-dimensional Brownian motion in a magnetic field \cite{FGL15}. For $\overline \bX$ however, they all hold. We will use $\sim$ to denote equality in law.

\begin{cor}[Stationarity, scale-invariance, permutation-invariance]\label{cor:inv}
Let $H \in (1/4,1/2]$, $\eta\in (0,H)$ as in \autoref{thm:localfbm}, and $\bX$ be a local, square-integrable $\eta$-RPL of $X$. Let $\varphi = (0, \varphi^{(2)}, \varphi^{(3)})$ be as in \autoref{thm:localfbm}. In the following, statements regarding third levels should be omitted if $H > \frac 13$.
\begin{enumerate}[label=(\alph*)]
\item \label{pt:stat} $\bX$ is stationary jointly with the underlying noise in the sense that $\bX_{s,t} \sim \bX_{0,t-s}$ if and only if $\varphi_{s,t}^{(n)} = (t-s)A^{(n)}$ with $A \in \mathcal G^{\lfloor 1/H \rfloor}(\mathbb R^d)$ for $n = 2,3$.

\item \label{pt:scale} $\bX$ is scale-invariant in the sense that $\bX^{(n)}_{s,t} \sim (t-s)^{nH} \bX^{(n)}_{0,1}$ for $n = 2,3$ if and only if it is stationary as above and either $H = \frac 12$, $H = \frac 13$ with $A^{(2)} = 0$, or $A = 0$.

\item \label{pt:perm} $\bX$ is coordinate permutation-invariant in the sense that $\sigma_*\bX \sim \bX$ for $\sigma \in \mathfrak S_d$ if and only if $\varphi^{\alpha\beta} = 0$ and
\begin{align}\label{eq:phiequi}
\varphi^{\alpha\beta\gamma} = (\delta^{\alpha\beta} - 2 \delta^{\alpha\gamma} + \delta^{\beta\gamma})\phi
\end{align}
for some $3\eta$-H\"older function $\phi\colon [0,1]\to\R$.
\end{enumerate}
In particular, all three conditions can only hold if either $\varphi = 0$ or for $H = \frac 13$, and in the latter case there is a one-dimensional continuum of such rough paths, namely for $\varphi^{\alpha\beta} = 0$ and $\varphi^{\alpha\beta\gamma}$ as in \eqref{eq:phiequi} with $\phi(t) = \lambda t$, $\lambda \in \mathbb R$.
\end{cor}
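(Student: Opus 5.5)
By \autoref{thm:localfbm}, $\bX$ is the stochastic translation of $\overline\bX$ by $\varphi$ given in \eqref{eq:bX_level2}--\eqref{eq:bX_level3}. All three properties hold for $\overline\bX$ itself. So the plan is, in each part, to read off necessary conditions on $\varphi$ from the \emph{means} (zeroth chaos projections) of $\bX_{s,t}$, and then check sufficiency by transporting the invariance of $(X,\overline\bX)$ through the explicit formulas.

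The basic observation concerns what survives in the mean. Equality in law of square-integrable random variables forces equality of means. Since $\overline\bX{}^{(2)}$ and $\overline\bX{}^{(3)}$ are stationary, scale-invariant and permutation-invariant in law, the differences of means isolate $\varphi$:
\begin{itemize}
\item $\E\bX^{(2)}_{s,t}-\E\overline\bX{}^{(2)}_{s,t}=\varphi^{(2)}_{s,t}$;
\item the Wiener integrals in \eqref{eq:bX_level3} are centred, and $\overline\bX{}^{(3)}$ lies in odd chaoses, so $\E\bX^{(3)}_{s,t}=\varphi^{(3)}_{s,t}$.
\end{itemize}

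\emph{Part \ref{pt:stat}.} Stationarity gives $\varphi^{(n)}_{s,t}=\varphi^{(n)}_{0,t-s}=:g^{(n)}(t-s)$. Additivity of $\varphi^{(n)}$ (Chen's identity above the zero path) makes $g^{(n)}$ a continuous solution of Cauchy's equation, hence $g^{(n)}(h)=hA^{(n)}$. Geometricity of $\varphi$ forces $A\in\mathcal G^{\lfloor 1/H\rfloor}(\R^d)$: $A^{(2)}$ is antisymmetric and $A^{(3)}$ satisfies the shuffle relations \eqref{eq:shufflerel}. Conversely, if $\varphi$ is linear in $t-s$, the integrands $\varphi_{s,u}=(u-s)A$ and $\varphi_{u,t}=(t-u)A$ depend only on differences. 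The map $(X_{s,s+\cdot},\overline\bX_{s,s+\cdot})\mapsto\bX_{s,t}$ is therefore the same for every $s$, and joint stationarity of fBm increments and of $\overline\bX$ gives $\bX_{s,t}\sim\bX_{0,t-s}$. The only mildly delicate point is that the Wiener integrals are the $L^2$-limits of Riemann sums of the increments, which have the same law under shifts.

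\emph{Part \ref{pt:scale}.} Scale-invariance of the means gives $\varphi^{(n)}_{s,t}=(t-s)^{nH}\varphi^{(n)}_{0,1}$. Additivity then forces $nH=1$ or $\varphi^{(n)}=0$, so either:
\begin{itemize}
\item $H=\tfrac12$ (level $2$ only), with $\varphi^{(2)}$ linear;
\item $H=\tfrac13$, with $\varphi^{(2)}=0$ and $\varphi^{(3)}$ linear;
\item $\varphi=0$.
\end{itemize}
In every case $\varphi$ is linear, so $\bX$ is stationary by part \ref{pt:stat}. For the converse, at $H=\tfrac13$ with $A^{(2)}=0$ the Wiener integrals vanish, so $\bX^{(3)}=\overline\bX{}^{(3)}+(t-s)A^{(3)}$. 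This has the right scaling because $(t-s)=(t-s)^{3H}$ and $\overline\bX$ is scale-invariant via fBm self-similarity. The case $H=\tfrac12$ at level $2$ is analogous.

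\emph{Part \ref{pt:perm}.} This is the step needing the most care, because the required output is an explicit classification of permutation-invariant tensors. From the means, $\varphi$ is invariant under $\sigma_*$ for every $\sigma\in\mathfrak S_d$.
\begin{itemize}
\item At level $2$, with $d\geq2$ and $\alpha\ne\beta$, invariance under the transposition $(\alpha\beta)$ gives $\varphi^{\alpha\beta}=\varphi^{\beta\alpha}=-\varphi^{\alpha\beta}$ by antisymmetry, so $\varphi^{(2)}=0$. Diagonal terms vanish by antisymmetry, and $d=1$ is trivial.
\item At level $3$, invariant tensors are determined by the equality pattern of the indices, namely $aaa$, $aab$, $aba$, $baa$, $abc$. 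Denote the corresponding values by $x$, $p$, $q$, $r$, $y$ (each a function of $(s,t)$).
\item The shuffle relations \eqref{eq:shufflerel} give: from $aa\shuffle a$, $3x=0$; from $ab\shuffle c$, $3y=0$; from $aa\shuffle b$, $p+q+r=0$; and from $ab\shuffle a$, $q+2p=0$. Hence $q=-2p$ and $r=p$, which is exactly \eqref{eq:phiequi} with $\phi=p$.
\item $\phi$ is $3\eta$-H\"older because $\varphi^{(3)}$ is.
\end{itemize}
Conversely, with $\varphi^{(2)}=0$ the Wiener integrals drop out, and $\bX^{(3)}=\overline\bX{}^{(3)}+\varphi^{(3)}$ with $\varphi^{(3)}$ deterministic and permutation-invariant. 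Permutation invariance of $\overline\bX$ in law then gives $\sigma_*\bX\sim\bX$.

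\emph{Final statement.} Combining the three parts: permutation-invariance kills $\varphi^{(2)}$, which excludes the nonzero $H=\tfrac12$ case. Scale-invariance then leaves only $\varphi=0$ or $H=\tfrac13$. In the latter case stationarity makes $\phi(t)=\lambda t$, and every such $\lambda\in\R$ is admissible by the converse directions above.
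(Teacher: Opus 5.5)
Your proposal is correct. For parts (a), (b) and the final count it follows the paper's proof essentially verbatim. In each case you read off $\varphi$ from the means and solve Cauchy's equation using additivity. For the converses you transport the invariances of $(X,\overline\bX)$ through the explicit formulas. The paper isolates $\varphi^{(2)}$ as the antisymmetric part of $\E\bX^{(2)}_{s,t}$, whereas you subtract $\E\overline\bX{}^{(2)}_{s,t}$; these amount to the same thing.

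The one genuine difference is the algebra in part (c).

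\emph{The paper's route.} It works on the Lie side, using two structural facts:
\begin{enumerate*}[label=(\roman*)]
\item the lowest non-vanishing level of a group-like element is a Lie element;
\item the averaging map $\frac{1}{d!}\sum_{\sigma}\sigma_*$ preserves $\mathcal L(\R^d)$.
\end{enumerate*}
It then computes $\mathcal L(\R^d)\cap\mathrm{Inv}_{\mathfrak S_d}(\R^d)^{\otimes n}$ for $n=2,3$ by averaging right-normed brackets over the index orbits. This gives $\{0\}$ at level $2$ and $\mathrm{span}\{\sum_{\alpha\neq\beta}[e_\alpha,[e_\alpha,e_\beta]]\}$ at level $3$.

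\emph{Your route.} You work on the dual, shuffle side. You parametrise invariant tensors by the five index-equality patterns and impose antisymmetry and the shuffle identity \eqref{eq:shufflerel} directly. This reduces to a small linear system whose solution is $x=y=0$, $q=-2p$, $r=p$.

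By Ree's theorem the two computations are equivalent. Yours is more elementary and self-contained: it needs neither structural fact and yields the coefficients in \eqref{eq:phiequi} immediately. The paper's is coordinate-free and names the surviving invariant as the Lie polynomial $\sum_{\alpha\ne\beta}[e_\alpha,[e_\alpha,e_\beta]]$. It is also the formulation that extends more naturally to higher levels.
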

\begin{proof}
To prove \ref{pt:stat}, suppose $\bX$ is stationary.
By taking expectations and using that the symmetric part of $\varphi^{(2)}$ vanishes, we obtain $\varphi_{s,t} = \varphi_{0,t-s}$. Using that $\varphi_{s,t} = \varphi_t - \varphi_s$, we have reduced the problem to classifying all additive continuous functions $\varphi \colon [0,1] \to \mathcal G^{\lfloor 1/H \rfloor}(\mathbb R^d) \subset T^3(\mathbb R^d)$. A familiar argument shows that for $\varphi_1 = q \varphi_{1/q}$ and therefore $\varphi_{p/q} = \frac pq \varphi_1$ for all rationals $p/q$, and we obtain that $\varphi$ is of the claimed form by continuity. Conversely, such choices of $\varphi$ are sufficiently regular and
\begin{equation}\label{eq:jointlaw}
\bigg( \overline \bX^{(2)}_{s,t}, \varphi^{(2)}_{s,t}, \overline \bX^{(3)}_{s,t}, \int_s^t\varphi^{(2)}_{s,u} \dif X_u, \varphi^{(3)}_{s,t} \bigg) \sim \bigg( \overline \bX^{(2)}_{0,t-s}, \varphi^{(2)}_{0,t-s}, \overline \bX^{(3)}_{0,t-s}, \int_0^{t-s}\varphi^{(2)}_{0,u} \dif X_u, \varphi^{(3)}_{0,t-s} \bigg),
\end{equation}
by stationarity of increments of fBm and the fact that all components of this random vector are obtained by limiting procedures that are time translation--invariant (e.g.\ $\overline \bX$ is the limit in $L^2$ of Stieltjes integrals of piecewise linear interpolations). This implies the statement.

For \ref{pt:scale}, if $\bX$ is scale-invariant, then by again taking expectations, we obtain 
$\varphi^{(2)}_{s,t} = (t-s)^{2H}\varphi^{(2)}_{0,1}$, which together with additivity forces either $2H = 1$ or $\varphi^{(2)}_{0,1} = 0$. The third level is handled similarly.
Conversely, the fact that these choices indeed preserve scaling is argued analogously to \eqref{eq:jointlaw}.

It remains to prove \ref{pt:perm}.
Let $\mathcal L(\mathbb R^d)$ denote the free Lie algebra over $d$ generators and 
\[
\mathrm{Inv}_{\mathfrak S_d} (\mathbb R^d)^{\otimes n} = \mathrm{span} \Big\{ \frac{1}{d!}\sum_{\sigma \in \mathfrak S_d} e_{\sigma(\alpha_1)} \otimes \cdots \otimes e_{\sigma(\alpha_n)} \ \Big| \ \alpha_1,\ldots, \alpha_n \in [d] \Big\}
\]
the space of tensors that are left invariant by the action of $\mathfrak S_d$ (cf.\ \cite[\S 5]{DR19}).
We first claim that
\begin{equation}\label{eq:L_inv}
\begin{split}
\mathcal L(\mathbb R^d) \cap \mathrm{Inv}_{\mathfrak S_d} (\mathbb R^d)^{\otimes 2} &= \{ 0 \} \\
\mathcal L(\mathbb R^d) \cap \mathrm{Inv}_{\mathfrak S_d} (\mathbb R^d)^{\otimes 3} &= \mathrm{span} \Big\{ \sum_{\alpha\neq\beta} [e_\alpha,[e_\alpha, e_\beta]] \Big\}.
\end{split}
\end{equation}
Indeed, the map $\frac{1}{d!}\sum_{\sigma \in \mathfrak S_d} \sigma_*$ averages algebra morphisms and thus preserves $\mathcal L(\mathbb R^d)$. Therefore, for any $n$
\begin{align*}
\mathcal L(\mathbb R^d) \cap \mathrm{Inv}_{\mathfrak S_d} (\mathbb R^d)^{\otimes n} &= \frac{1}{d!}\sum_{\sigma \in \mathfrak S_d} \sigma_* (\mathcal L^{(n)}(\mathbb R^d)) \\
&= \mathrm{span}\Big\{ \frac{1}{d!}\sum_{\sigma \in \mathfrak S_d} \sigma_* [e_{\alpha_1}, [ e_{\alpha_2}, \ldots [e_{\alpha_{n-1}}, e_{\alpha_n}]\ldots] \ \Big| \ \alpha_1,\ldots, \alpha_n \in [d] \Big\}.
\end{align*}
For level $2$ we conclude immediately. At level $3$, we consider the $5$ orbits:
\[
(1) \ \alpha = \beta = \gamma, \quad (2) \ \alpha \neq \beta = \gamma, \quad (3) \ \alpha = \beta \neq \gamma, \quad (4) \ \alpha = \gamma \neq \beta,\quad (5) \ \alpha \neq \beta \neq \gamma \neq \alpha.
\]
Lie brackets $[e_\alpha, [e_\beta, e_\gamma]]$ corresponding to $(1)$ and $(2)$ vanish by skew-symmetry, the ones corresponding to $(5)$ vanish by Jacobi, and the cases $(3)$ and $(4)$ are linearly dependent by skew-symmetry.
This proves \eqref{eq:L_inv}.

Next, remark that, if $g \in \mathcal G^N(\mathbb R^d)$ for some $N$ and is zero at degrees $k = 1,\ldots,n-1$, then its projection onto $(\mathbb R^d)^{\otimes n}$ must lie in $\mathcal L(\mathbb R^d)$: this follows by writing $g = \exp \log g$ and projecting onto the first non-zero degree.

Returning to the proof of \ref{pt:perm}, suppose $\bX$ is coordinate permutation-invariant. Then $\varphi^{(2)}$ must also be coordinate invariant and thus belongs to $\mathcal L(\mathbb R^d) \cap \mathrm{Inv}_{\mathfrak S_d} (\mathbb R^d)^{\otimes 2}$ by the above remark. Therefore $\varphi^{(2)}=0$ by \eqref{eq:L_inv}.
Likewise $\varphi^{(3)}$ belongs to $\mathcal L(\mathbb R^d) \cap \mathrm{Inv}_{\mathfrak S_d} (\mathbb R^d)^{\otimes 3}$,
from which the claimed form of $\varphi$ follows again by \eqref{eq:L_inv} and the fact that
\[
[e_\alpha,[e_\alpha,e_\beta]] = e_\alpha \otimes e_\alpha \otimes e_\beta - 2 e_\alpha \otimes e_\beta \otimes e_\alpha + e_\beta \otimes e_\alpha \otimes e_\alpha\;.
\]
Conversely, any such $\varphi$ clearly leads to a coordinate permutation-invariant $\bX$.
\end{proof}

\subsection{Dropping the $L^2$ assumption}
\label{sec:drop_L2}

The material in this section depends heavily on the rough path lift being square-integrable, so that it can be expanded in Wiener chaos.
We conjecture that the square-integrability hypothesis in \autoref{thm:localfbm} can be dropped, and moreover that for $H \leq \frac 14$ any local rough path lift defined on a Borel set $B \subset \Omega$ must force $B$ to be null w.r.t.\ the measure of $H$-fBm. For $H> \frac 14$, square-integrability would then follow a posteriori from the resulting classification.


We next show that scale-invariance is sufficient to guarantee square-integrability of a local rough path lift.
The following argument was suggested by GPT-5.6 Sol Pro and subsequently checked and adapted by the authors, who take responsibility for the result.
The proof makes no use of the Chen identity and instead relies on a combination of scale-invariance and H\"older regularity.
We still regard our original conjecture above, which requires no extra properties of the $\eta$-RPL beyond locality, as an interesting open problem.


For $0 \leq s < u < t < v \leq 1$ define
\begin{equation}\label{eq:rho}
	\rho([s,u],[t,v]) \coloneqq \sup \big\{ | \langle h, k \rangle| : h \in \cH^{(d)}_{s,u}, \ k \in \cH^{(d)}_{t,v}, \ \| h \|_{\cH^{(d)}} = 1 = \| k \|_{\cH^{(d)}} \big\}.
\end{equation}
\begin{lem}\label{lem:corr}
	Let $H\in (0,\frac12]$.
	There exists a constant $C_H$ depending only on $H$ such that for $\tau > 0$ with $0 \leq s<s + \tau <t <t+\tau\leq 1$ it holds that
	\[
	\rho([s, s+\tau], [t, t+ \tau]) \leq 1 \wedge C_H\bigg(\frac{\tau}{t-(s + \tau)}\bigg)^{2 - 2H}.
	\]
\end{lem}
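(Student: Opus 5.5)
The plan is to reduce to a single coordinate and then compute the cross inner product explicitly through the fBm covariance kernel.

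\emph{Reduction to one coordinate.} Since $\cH^{(d)}$ is an orthogonal direct sum of the $\cH^\alpha$, which commutes with restriction to $[s,u]$ and $[t,v]$, write $h=\sum_\alpha h^\alpha$ and $k=\sum_\alpha k^\alpha$. Then $\langle h,k\rangle=\sum_\alpha\langle h^\alpha,k^\alpha\rangle$. By Cauchy--Schwarz in $\alpha$, the $d$-dimensional supremum equals the scalar one. The bound $\rho\le 1$ is just Cauchy--Schwarz, so it remains to prove the decay estimate in the regime $\tau\ll t-s-\tau=:D$.

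\emph{Kernel representation.} For $H<1/2$ and disjointly supported step functions $h=\sum a_i\mathbbm 1_{[u_i,u_{i+1}]}$ on $[s,s+\tau]$ and $k$ on $[t,t+\tau]$, the inner product is $\sum a_ib_j\,\E X_{u_i,u_{i+1}}X_{v_j,v_{j+1}}$. For disjoint intervals,
\[
\E X_{a,b}X_{c,e} = \tfrac12\big((e-a)^{2H}+(c-b)^{2H}-(c-a)^{2H}-(e-b)^{2H}\big) = H(2H-1)\int_a^b\!\!\int_c^e |y-x|^{2H-2}\dif y\dif x .
\]
This yields
\[
\langle h,k\rangle = H(2H-1)\int\!\!\int h(x)k(y)|y-x|^{2H-2}\dif x\dif y,
\]
with no singularity because the supports are at distance $D>0$. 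Extend this to all of $\cH_{s,s+\tau}\times\cH_{t,t+\tau}$ by density and continuity. The right side is continuous in the $L^2$ norms, and $\cH_{s,t}\hookrightarrow L^2$ continuously by \autoref{lem:characterization}. For $H=1/2$ the inner product is zero, so the statement is trivial.

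\emph{Naive estimate and the obstacle.} Combining the kernel bound $|y-x|^{2H-2}\le D^{2H-2}$ with Cauchy--Schwarz, and then \autoref{lem:characterization} in the form $\|h\|_{L^1}\le\tau^{1/2}\|h\|_{L^2}\lesssim\tau^{1-H}\|h\|_{\cH}$, gives
\[
|\langle h,k\rangle|\lesssim D^{2H-2}\tau^{2-2H}\|h\|_\cH\|k\|_\cH .
\]
Here, though, the norm $\|h\|_{\cH^{(d)}}$ in \eqref{eq:rho} is the norm of $\cH$, and by \autoref{lem:restriction} it coincides with the $\cH_{s,s+\tau}$ norm. So this estimate is already exactly the claim:
\[
|\langle h,k\rangle|\lesssim\Big(\frac{\tau}{D}\Big)^{2-2H}.
\]

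The one point that needs care is the density/continuity justification of the kernel formula. I would first prove the formula on $\cE$ via the integral identity for disjoint intervals. I would then pass to limits using $L^2$-continuity, which follows from \autoref{lem:characterization}, together with the fact that the kernel is bounded on the product of the two supports. The scaling constants must be tracked so that $C_H$ depends only on $H$; this holds because \autoref{lem:characterization} is scale-uniform in the form $|t-s|^{2H-1}\|f\|_{L^2}^2\lesssim_H\|f\|_{\cH_{s,t}}^2$.
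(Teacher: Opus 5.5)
Your proposal is correct and follows the paper's proof essentially step for step. You reduce to $d=1$ by Cauchy--Schwarz across coordinates, represent $\langle h,k\rangle$ on step functions through the kernel $H(2H-1)|y-x|^{2H-2}$ and extend by density, bound the kernel by $(t-(s+\tau))^{2H-2}$, and then pass from $L^1$ to $\cH$ norms via $\|h\|_{L^1}\le\tau^{1/2}\|h\|_{L^2}$ and the lower bound in \autoref{lem:characterization}. Your explicit constant $H(2H-1)$ and your use of \autoref{lem:restriction} to identify the $\cH^{(d)}$-norm with the $\cH_{s,s+\tau}$-norm are slightly more explicit than the paper's $\asymp_H$, but the argument is the same.
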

\begin{proof}
	The case $H=1/2$ is obvious because the left-hand side is zero.
	Suppose henceforth that $H<1/2$.
	We treat first the case $d = 1$. It follows from approximating $f, g$ with step functions, the inequality $\|f\|_{L^1(I)} \leq |I|^{1/2} \|f\|_{L^2(I)}$ for an interval $I$ (which follows from Cauchy–Schwarz applied to $f \cdot 1$), and \autoref{lem:characterization}
	\begin{align*}
		|\langle f, g \rangle| &\asymp_H \bigg|\int_{[s, s+\tau] \times [t, t+ \tau]} f(u) g(v) (v-u)^{2H - 2} \dif u \dif v \bigg| \\
		&\leq (t-(s + \tau))^{2H - 2}\| f \|_{L^1} \| g \|_{L^1} \\
		&\leq (t-(s + \tau))^{2H - 2} \tau \| f \|_{L^2} \| g \|_{L^2} \\
		&\lesssim_H (t-(s + \tau))^{2H - 2} \tau^{2 - 2H} \| f \|_{\cH_{s,s+\tau}} \| g \|_{\cH_{t, t+\tau}}.
	\end{align*}
	The case of general $d$ follows from an application of Cauchy–Schwarz in $\mathbb R^d$.
\end{proof}

The following version of Gebelein's inequality \cite{Geb41} is very similar to the one presented in \cite[Proposition 5.1]{NPY19}, but differs slightly in its assumptions, so we provide a self-contained proof (cf.\ also \cite{Ver09}). It is a version of the Cauchy–Schwarz inequality between Wiener functionals that takes into account the correlation of the underlying Gaussian fields.

\begin{lem}[Gebelein inequality]\label{lem:gebelein}
	Let $W \colon \mathfrak H \to L^2\Omega$ be an isonormal Gaussian process, and $\mathfrak H_1, \mathfrak H_2$ closed subspaces of the Hilbert space $\mathfrak H$. Assume that $\rho \in [0,1]$ is such that $|\langle h, k \rangle| \leq \rho \| h \| \| k\|$. Then for all centred $F \in L^2(\sigma(W|_{\mathfrak H_1}))$ and $G \in L^2(\sigma(W|_{\mathfrak H_2}))$
	\[
	|\E [FG]| \leq \rho \| F \|_{L^2} \| G \|_{L^2}.
	\]
\end{lem}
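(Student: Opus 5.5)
We will prove the Gebelein inequality via the Wiener chaos decomposition and a contraction estimate on each chaos. Expand $F$ and $G$ in chaos: since $F$ is $\sigma(W|_{\mathfrak H_1})$-measurable and square-integrable, the chaos decomposition of $L^2(\sigma(W|_{\mathfrak H_1}))$ (the restriction $W|_{\mathfrak H_1}$ is itself an isonormal process on $\mathfrak H_1$) gives
\[
F = \sum_{m\geq 1} W^m(f_m), \qquad f_m \in \mathfrak H_1^{\odot m},
\]
with no $m=0$ term because $F$ is centred. Similarly $G = \sum_{m\geq1} W^m(g_m)$ with $g_m\in \mathfrak H_2^{\odot m}$. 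The multiple integrals over $\mathfrak H_i$ coincide with those over $\mathfrak H$ under the natural inclusion $\mathfrak H_i^{\odot m}\subset \mathfrak H^{\odot m}$, exactly as in the commuting diagram after \eqref{eq:isometry}. By orthogonality of distinct chaoses and the isometry $\E[W^m(f)W^m(g)] = m!\langle f,g\rangle_{\mathfrak H^{\otimes m}}$, we get
\[
\E[FG] = \sum_{m\geq1} m!\,\langle f_m, g_m\rangle_{\mathfrak H^{\otimes m}}.
\]

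The key step is the bound $|\langle f,g\rangle_{\mathfrak H^{\otimes m}}| \leq \rho^m \|f\|\|g\|$ for $f\in\mathfrak H_1^{\otimes m}$ and $g\in\mathfrak H_2^{\otimes m}$. Let $P_1,P_2$ be the orthogonal projections onto $\mathfrak H_1,\mathfrak H_2$, and set $T = P_1 P_2|_{\mathfrak H_2}\colon \mathfrak H_2\to\mathfrak H_1$. Then $\langle h,k\rangle = \langle h, Tk\rangle$ for $h\in\mathfrak H_1$, $k\in\mathfrak H_2$, and the hypothesis says precisely that $\|T\|_{\mathrm{op}}\leq \rho$: take $h = Tk/\|Tk\|$. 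On tensor powers one has $\langle f,g\rangle = \langle f, T^{\otimes m} g\rangle$, which we check on elementary tensors and extend by density and continuity, and $\|T^{\otimes m}\| = \|T\|^m \leq \rho^m$ by the standard tensor-product operator norm identity. Cauchy–Schwarz then gives the claim. I expect this to be the only step needing care: one must make sure $T^{\otimes m}$ is well defined and bounded on the Hilbert tensor product, which follows by factorising it as $(T\otimes I\cdots\otimes I)\cdots(I\otimes\cdots\otimes T)$, each factor having norm $\|T\|$.

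Finally, using $\rho^m\leq\rho$ for $m\geq1$ and $\rho\in[0,1]$, followed by Cauchy–Schwarz in $\ell^2$,
\[
|\E[FG]| \leq \rho\sum_{m\geq1} \sqrt{m!}\|f_m\|\,\sqrt{m!}\|g_m\| \leq \rho\Big(\sum_m m!\|f_m\|^2\Big)^{1/2}\Big(\sum_m m!\|g_m\|^2\Big)^{1/2} = \rho\|F\|_{L^2}\|G\|_{L^2}.
\]
Centredness is essential here, since it removes the $m=0$ term, where the factor $\rho^0=1$ would give no gain.
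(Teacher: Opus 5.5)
Your proof is correct and follows essentially the same route as the paper: a chaos expansion with no zeroth term, the identity $\langle f_m, g_m\rangle = \langle f_m, T^{\otimes m} g_m\rangle$ for a projection-type contraction $T$ of norm at most $\rho$, the bound $\rho^m\leq\rho$, and Cauchy--Schwarz in $\ell^2$. The only cosmetic difference is the direction of $T$. Your $T = P_1P_2|_{\mathfrak H_2}\colon\mathfrak H_2\to\mathfrak H_1$ is the adjoint of the paper's projection of $\mathfrak H_1$ onto $\mathfrak H_2$. Also, only the inequality $\|T^{\otimes m}\|\leq\|T\|^m$ is needed, not equality.
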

\begin{proof}
	Write the Wiener chaos expansions for $F$ and $G$:
	\[
	F = \sum_{m = 1}^\infty \sko^m(f^m), \qquad G = \sum_{m = 1}^\infty \sko^m(g^m),
	\]
	with $f^m \in \mathfrak H_1^{\odot m}$ and $g^m \in \mathfrak H_2^{\odot m}$. Orthogonality of Wiener chaoses gives
	\[
	\E[FG] = \sum_{m = 1}^\infty m! \langle f^m, g^m \rangle_{\mathfrak H^{\odot m}}.
	\]
	Let $T \colon \mathfrak H_1 \to \mathfrak H_2$ be the orthogonal projection of $\mathfrak H$ onto $\mathfrak H_2$, restricted to $\mathfrak H_1$. Then 
	\[
	\langle f^m, g^m \rangle_{\mathfrak H^{\otimes m}} = \langle T^{\otimes m} f^m, g^m \rangle_{\mathfrak H^{\otimes m}}
	\]
	as is easily shown on elementary tensors and extending by continuity of $T^{\otimes m}$. Note that the operator norm of $T$ is bounded by
	\[
	\| T \| = \sup_{\substack{h \in \mathfrak H_1 \\ \| h \| = 1}} \| T h \| = \sup_{\substack{f \in \mathfrak H_1, g \in \mathfrak H_2 \\ \|f\| = 1 = \|g\|}} | \langle f, g \rangle| \leq \rho
	\]
	and by the submultiplicativity of the operator norm for tensor products of bounded operators on Hilbert spaces \cite[Proposition 2.6.12]{KR97}, $\|T^{\otimes m}\| \leq \|T\|^m$. Then by Cauchy–Schwarz
	\[
	|\langle f^m, g^m \rangle| \leq \| T^{\otimes m} f^m \| \| g^m \| \leq \rho^m  \|f^m \| \| g^m \| \leq \rho \|f^m \| \| g^m \|
	\]
	since $0 \leq \rho \leq 1$. Applying Cauchy–Schwarz now in $\ell^2$
	\[
	| \E[FG] | \leq \rho \sum_{m = 1}^\infty m! \|f^m \| \| g^m \| \leq \rho \bigg( \sum_{m = 1}^\infty m! \| f^m \|^2 \bigg)^{1/2}  \bigg( \sum_{m = 1}^\infty m! \| g^m \|^2 \bigg)^{1/2} = \rho \|F\|_{L^2} \|G\|_{L^2}.\qedhere
	\]
\end{proof}

The following elementary lemma will be used to combine the key features of our problem.
\begin{lem}\label{lem:elementary}
Let $S$ be a non-negative, square-integrable random variable, $\delta \in [0,1)$, $a > 0$, and $b, p \geq 0$. Assume that
\[
\E[S] \geq a p, \qquad \mathrm{Var}[S] \leq bp,\qquad \P[S > 0] \leq \delta.
\]
Then 
\[
p \leq \frac{\delta}{1 - \delta} \frac{b}{a^2}.
\]
\end{lem}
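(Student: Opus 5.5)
The plan is to combine the first two hypotheses, on the mean and the variance, into a lower bound on the conditional second moment of $S$ on the event $\{S>0\}$, and then play this against the smallness of $\P[S>0]$. The tool is Cauchy--Schwarz applied to $S = S\mathbbm 1_{\{S>0\}}$. Throughout we assume $p>0$, since for $p=0$ there is nothing to prove.

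First, since $S\geq 0$, we have $S = S\mathbbm 1_{\{S>0\}}$ almost surely. Cauchy--Schwarz then gives
\[
\E[S]^2 \leq \E[S^2]\,\P[S>0] \leq \delta\,\E[S^2] = \delta\big(\mathrm{Var}[S] + \E[S]^2\big).
\]
Rearranging, and using $\delta<1$, we get
\[
(1-\delta)\,\E[S]^2 \leq \delta\,\mathrm{Var}[S].
\]

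Next we insert the two hypotheses. The left-hand side is bounded below by $(1-\delta)a^2p^2$, since $\E[S]\geq ap\geq 0$. The right-hand side is bounded above by $\delta b p$. This yields
\[
(1-\delta)a^2p^2 \leq \delta b p.
\]
Dividing by $(1-\delta)a^2p>0$ gives $p \leq \frac{\delta}{1-\delta}\frac{b}{a^2}$, as claimed.

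There is no real obstacle here. The only points requiring care are dividing by $p$ (which is why the case $p=0$ is treated separately) and dividing by $1-\delta$ (which is why $\delta<1$ is needed). We also need $ap\geq 0$ so that squaring the lower bound on $\E[S]$ preserves the inequality; this holds since $a>0$ and $p\geq 0$.
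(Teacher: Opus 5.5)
Your proof is correct and follows the paper's argument: Cauchy--Schwarz applied to $S = S\mathbbm 1_{\{S>0\}}$ (the $\theta=0$ case of Paley--Zygmund) gives $(1-\delta)\E[S]^2 \leq \delta\,\mathrm{Var}[S]$, and you then insert the mean and variance bounds and divide by $p>0$. Writing the bound in product form, $\E[S]^2 \leq \E[S^2]\,\P[S>0]$, also avoids the division by $\E[S^2]$ that the paper's ratio form leaves implicit.
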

\begin{proof}
If $p=0$ the statement is trivial, so we assume $p>0$.
By writing $S = S \mathbbm 1_{S > 0}$ and applying Cauchy–Schwarz we obtain the bound (the $\theta = 0$ case of the Paley–Zygmund inequality)
\[
\delta \geq \P[S > 0] \geq \frac{\E[S]^2}{\E[S^2]} = \frac{\E[S]^2}{\mathrm{Var}[S] + \E[S]^2}.
\]
Rearranging and applying the bounds on mean and variance, we conclude by
\[
\delta bp \geq \delta \mathrm{Var}[S] \geq (1 - \delta) \E[S]^2 \geq (1-\delta)a^2p^2.\qedhere
\]
\end{proof}

We briefly explain the mechanism behind the following integrability argument. Using Gebelein's inequality \autoref{lem:gebelein} applied with the correlation of fractional Brownian motion on disjoint intervals \autoref{lem:corr}, we show an identity for the mean and an upper bound on the variance of a sum $S$ of indicator functions, each on the event that $|\bX_{s,t}|$ exceeds a threshold, the sum taken over a collection of non-adjacent intervals $[s,t]$. The identity on $\E S$ and upper bound on $\mathrm{Var}[S]$ are obtained in terms of a tail probability $p$ of $\bX_{0,1}$, also thanks to scale-invariance. The threshold for $|\bX_{s,t}|$ is carefully scaled so that $S$ is zero with positive probability, enabling the application of \autoref{lem:elementary}, but also so that doing so yields a tail estimate that is strong enough (as parameters involved are varied) to guarantee the needed integrability: this can be done thanks to the H\"older property of $\bX$, again using scale invariance to deduce the claim for all $s,t$.

\begin{thm}\label{thm:scale_integrability}
Let \(H\in(0,\frac12]\) and \(0<\eta<H\) satisfy
\(
\frac1\eta< \lfloor\frac1H \rfloor+1
\).
Let \(\bX\) be a local \(\eta\)-RPL of \(H\)-fBm.
Assume that \(\bX\) is scale-invariant, in the sense that
\[
\bX^{(n)}_{s,t}
\sim
(t-s)^{nH}\bX^{(n)}_{0,1}
\]
for every \(n=2,\ldots,\lfloor\frac1H \rfloor\) and \(0\leq s<t\leq1\). Then
\(
\bX^{(n)}_{s,t}\in L^q
\)
for every \(n=2,\ldots,\lfloor\frac1H \rfloor\) and every
\(
0<q<\frac{1}{n(H-\eta)}.
\)
Consequently, \(\bX\) is square-integrable, and the conclusions of
\autoref{thm:localfbm} remain valid when square-integrability is
replaced by scale-invariance.
\end{thm}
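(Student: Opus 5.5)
Fix a level $n\in\{2,\ldots,\lfloor 1/H\rfloor\}$ and write $Y_{s,t}\coloneqq|\bX^{(n)}_{s,t}|$ and $p(\lambda)\coloneqq\P[Y_{0,1}>\lambda]$. By scale-invariance, $\P[Y_{s,t}>\lambda (t-s)^{nH}]=p(\lambda)$ for all $s<t$. The aim is a tail bound $p(\lambda)\lesssim \lambda^{-r}$ for every $r<1/(n(H-\eta))$. This gives $\bX^{(n)}_{0,1}\in L^q$ for all $q<1/(n(H-\eta))$, and hence the claim for every $(s,t)$ by scaling. Since $1/\eta<\lfloor 1/H\rfloor+1$, one checks that $1/(n(H-\eta))>2$ for all relevant $n$, so square-integrability follows. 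The final sentence of the theorem is then immediate, because \autoref{thm:localfbm} only used square-integrability.

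\textbf{Counting exceedances.} Fix $\lambda$ large and a mesh size $\tau=1/N$. Take $M\asymp N/K$ non-adjacent intervals $I_j=[s_j,s_j+\tau]\subset[0,1]$ separated by gaps of length $K\tau$. Set
\[
S=\sum_j \mathbbm 1\{Y_{I_j}>\lambda\tau^{nH}\}.
\]
Then $\E S = Mp(\lambda)$. Each indicator is measurable with respect to the noise increments on $I_j$, by locality. By \autoref{lem:gebelein} combined with \autoref{lem:corr}, the centred indicators on $I_j$ and $I_k$ have covariance at most $C(|j-k|K)^{-(2-2H)}\,p(1-p)$. Summing over pairs, and using $2-2H>1$ so the sum converges,
\[
\mathrm{Var}[S]\le Mp\Big(1+C\sum_{l\ge1}(lK)^{2H-2}\Big)\lesssim Mp
\]
for $K$ a fixed large constant. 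So $a=M$ and $b\asymp M$ in the notation of \autoref{lem:elementary}.

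\textbf{Making $S$ vanish with positive probability.} This uses Hölder regularity. Let $\|\bX\|$ denote the homogeneous $\eta$-Hölder norm, which is a.s. finite. Then $Y_{I_j}\le \|\bX\|^n\tau^{n\eta}$. Choose $\tau$ so that $\lambda\tau^{nH}=L^n\tau^{n\eta}$, that is, $\tau=(L^n/\lambda)^{1/(n(H-\eta))}$. On the event $\{\|\bX\|\le L\}$ we then have $S=0$. Fix $L$ so that $\P[\|\bX\|>L]\le\delta<1$; this needs no integrability, only a.s. finiteness. \autoref{lem:elementary} now gives
\[
p(\lambda)\le \frac{\delta}{1-\delta}\cdot\frac{b}{a^2}\lesssim \frac1M\asymp \tau\asymp \lambda^{-1/(n(H-\eta))}.
\]
Replacing $\eta$ by any $\eta'\in(\eta,H)$ is not available, since $\eta$ is fixed. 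However, the bound $p(\lambda)\lesssim\lambda^{-1/(n(H-\eta))}$ already yields $L^q$ for every $q<1/(n(H-\eta))$, as required. One must take $\tau$ of the form $1/N$, or handle rounding, and require $\lambda$ large enough that $\tau\le1$.

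\textbf{Main obstacle.} The delicate step is the variance bound. There are two issues. First, \autoref{lem:gebelein} requires $\rho\le1$ and orthogonal-complement structure, but $\cH^{(d)}_{s,t}$-measurability of $\bX^{(n)}_{I_j}$ comes exactly from locality, so this fits. Second, the correlation decay $(\tau/\mathrm{gap})^{2-2H}$ must be summable uniformly in $N$; the exponent $2-2H>1$ guarantees this. I would also double-check that adjacent-but-separated intervals are handled by the $1\wedge$ in \autoref{lem:corr}, which is why the separation constant $K$ is needed.
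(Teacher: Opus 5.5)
Your proposal is correct and follows essentially the same route as the paper's proof: you count threshold exceedances of $|\bX^{(n)}|$ over separated intervals of length $\tau$, bound $\mathrm{Var}[S]\lesssim Mp$ via \autoref{lem:gebelein} and \autoref{lem:corr}, use a.s.\ finiteness of the $\eta$-H\"older norm to make $S$ vanish with probability at least $1-\delta$, and conclude with \autoref{lem:elementary}. The differences are cosmetic: you fix the threshold $\lambda$ and solve for $\tau$, whereas the paper fixes $N$ and uses the threshold $K_{\eta,\delta}N^{n(H-\eta)}$, and your large separation constant $K$ is unnecessary, since the paper takes gaps equal to the interval length and the truncation $1\wedge$ in \autoref{lem:corr} already handles nearby intervals.
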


	

\begin{proof}
	Denote $m = \lfloor 1/H \rfloor$.
	We first prove the second claim from the first.
	Note that, for all $n=2,\ldots, m$,
	\[
	2 < \frac{1}{n(H-\eta)} \iff \eta > H - \frac{1}{2n}
	\]
	and the final condition holds because
	\[
	\eta > \frac{1}{m+1} > \frac{1}{2m} = \frac{1}{m} - \frac{1}{2m}\geq H - \frac{1}{2n},
	\]
	where the first inequality holds by assumption.
	Therefore $q=2$ is in the permitted range and it follows from the first claim that
	\(
\bX^{(n)}_{s,t}\in L^2
\).

Fix $n=2,\ldots, m$.
	It remains to prove $\bX^{(n)}_{s,t} \in L^q$ for $0<q < \frac{1}{n(H-\eta)}$.
	Let $N \geq 4$ be even and set for $j = 0,\ldots, N/2 - 1$
	\[
	V_{N,j} \coloneqq N^{nH} \bX^{(n)}_{2j/N, (2j+1)/N}.
	\]
	By locality, $V_{N,j}$ is $\mathcal F_{2j/N, (2j+1)/N}$-measurable, and by scale-invariance $V_{N,j} \sim  \bX^{(n)}_{0,1}$ for all $N, j$. For $x > 0$ define
	\[
	p(x) \coloneqq \P[|\bX^{(n)}_{0,1}| > x],\qquad S_N(x) \coloneqq \sum_{j = 0}^{N/2 - 1} \mathbbm 1_{|V_{N,j}| > x},
	\]
	and note that
	\begin{equation}\label{eq:meanSN}
	\E S_N(x) = Np(x)/2.
	\end{equation}
	Fix $0 \leq i < j \leq N/2 - 1$ and applying the Gebelein inequality \autoref{lem:gebelein} (with Hilbert subspaces $\cH_{2i/N, (2i+1)/N}$ and $\cH_{2j/N, (2j+1)/N}$) to the centred and bounded random variables $\mathbbm 1_{|V_{N,i}| > x} - p(x)$ and $\mathbbm 1_{|V_{N,j}| > x} - p(x)$, with the bound on correlations supplied by \autoref{lem:corr}, we compute
	\begin{align*}
		|\mathrm{Cov}[ \mathbbm 1_{|V_{N,i}| > x},  \mathbbm 1_{|V_{N,j}| > x}]| &\leq \rho_{j-i} p(x)(1 - p(x)) \leq \rho_{j-i} p(x),
	\end{align*}
	where
	\[
	\rho_r \coloneqq \begin{cases} 1 \wedge C_H(2r - 1)^{2H-2} &\text{ if } H<1/2,\\
		0 &\text{ if } H=1/2.
		\end{cases}
	\]
	Then, bounding trivially $\mathrm{Var}[\mathbbm 1_{|V_{N,i}| > x}] = p(x)(1-p(x)) \leq p(x)$,
	\begin{equation}\label{eq:varbound}
	\begin{split}
		\mathrm{Var}[S_N(x)] &\leq Np(x)/2 + 2p(x) \sum_{0 \leq i < j < N/2} \rho_{j-i} \\
		&\leq Np(x)/2 + N p(x) \sum_{r = 1}^{N/2} \rho_r \\[-3ex]
		&\leq B_H N p(x), \qquad \qquad B_H \coloneqq \begin{dcases}\frac 12 +  \sum_{r = 1}^\infty \rho_r &H < \frac 12 \\ \frac 12 &H = \frac 12, \end{dcases}
	\end{split}
	\end{equation}
	where the crude bound on the sum over $i < j$ comes from observing that there are at most $N/2$ summands with $j - i = r$, and
	\[
	\sum_{r = 1}^\infty \rho_r \lesssim_H 1 + \sum_{r = 1}^\infty r^{2H - 2} < \infty 
	\]
	since $H < \frac 12 \implies 2H - 2 < -1$, while if $H = \frac 12$ the off-diagonal covariances vanish altogether.
	
	Let
	\[
	R_\eta = \sup_{0\leq s < t \leq 1} \frac{|\bX^{(n)}_{s,t}|}{|t-s|^{n\eta}}.
	\]
	Since $\bX$ is an $\eta$-H\"older rough path, $R_\eta < \infty$ a.s., and therefore there exist $K_{\eta,\delta}>0$ and $\delta \in [0,1)$ such that
	\begin{equation}\label{eq:holderQuantile}
	\P[  R_\eta > K_{\eta,\delta}] \leq \delta.
	\end{equation}
	By definition, we have 
	\[
		\max_{0 \leq j < N/2} |V_{N,j}| \leq R_\eta N^{n(H - \eta)} \quad \text{a.s.}
	\]
	from which the inclusion
	\[
	\{S_N(K_{\eta,\delta} N^{n(H - \eta)}) > 0\} = \Big\{ \max_{0 \leq j < N/2} |V_{N,j}| > K_{\eta,\delta} N^{n(H - \eta)} \Big\} \subset \big\{ R_\eta > K_{\eta,\delta} \big\}
	\]
	follows. By \eqref{eq:meanSN}, \eqref{eq:varbound} and \eqref{eq:holderQuantile}, the hypotheses of \autoref{lem:elementary} are satisfied with $S = S_N(x_N)$, $a = N/2$, $b = B_HN$, and $p = p(x_N)$ where $x_N = K_{\eta,\delta} N^{n(H - \eta)}$, which implies the tail estimate
	\[
	p(K_{\eta,\delta} N^{n(H - \eta)}) \lesssim_{\delta,H} \frac{1}{N}.
	\]
	This implies that $p(x)\lesssim x^{-\frac{1}{n(H-\eta)}}$ uniformly in $x\geq 1$ and thus $\bX^{(n)}_{0,1} \in L^q$ once $q < \frac{1}{n(H - \eta)}$.
	The claim for general $s,t$ follows from $\bX_{s,t}^{(n)} \sim (t-s)^{nH} \bX^{(n)}_{0,1}$.
	\end{proof}
	\begin{rem}
	For the above proof to work, it is enough to assume that $\bX$ is $\eta$-H\"older with strictly positive probability, since \eqref{eq:holderQuantile} still follows. 
	It may at first appear surprising that controlling the regularity of $\bX$ on an event of positive probability suffices,
	but this is explained by the fact that H\"older regularity is used only to ensure that $\max_{0 \leq j < N/2} |V_{N,j}| \leq K_{\eta,\delta} N^{n(H - \eta)}$ with uniformly positive probability, so that \autoref{lem:elementary} can be applied.
	\end{rem}

\bibliographystyle{alpha} 
\renewcommand\bibname{\sc References}
\bibliography{refs}
\addcontentsline{toc}{section}{References}

\end{document}